\documentclass[10pt]{article}
\usepackage{mathrsfs}
\usepackage{amsmath}
\usepackage{amssymb}
\usepackage{subfigure}
\usepackage{amsfonts}
\usepackage{color,xcolor}
\usepackage{graphicx}
\usepackage{epstopdf}
\usepackage{geometry}
\usepackage{manfnt}
\usepackage[pagewise]{lineno} %\linenumbers
\RequirePackage[colorlinks,citecolor=blue,urlcolor=blue]{hyperref}
\newtheorem{theorem}{Theorem}[section]

\newtheorem{lemma}{Lemma}[section]
\newtheorem{proposition}{Proposition}[section]

\newtheorem{definition}{Definition}[section]
\newtheorem{remark}{Remark}[section]
\newtheorem{example}{Example}[section]
\newcommand{\beqn}{\begin{eqnarray}}
\newcommand{\eeqn}{\end{eqnarray}}
\newcommand{\beq}{\begin{equation}}
\newcommand{\eeq}{\end{equation}}

\usepackage{enumitem}
\usepackage{comment}
\begin{document}
\makeatletter %%%%%%%%%%%%%%%%%%%%%%%%%
\renewcommand{\theequation}{\thesection.\arabic{equation}}
\title{\bf A new class of pyramidal central configurations for spatial Newtonian 6-body problems}
\renewcommand{\@thesubfigure}{\hskip\subfiglabelskip}
\author{Liang Ding$^{1,2}$, Jin Wang$^{1}$, Jinrong Wang$^{2,3}$ and Zhifu Xie$^{4}$\thanks{Corresponding author.} \\
 {\small \it $^1$School of Data Science and Information Engineering, Guizhou Minzu University,} \\
 {\small \it Guiyang, Guizhou 550025, P.R. China}\\
{\small \it $^2$Guizhou Provincial Key Laboratory of Applied Mathematics and Computing Power \& Algorithms,}\\
 {\small \tt ding2016liang@126.com (Liang Ding), wangjyfy@126.com (Jin Wang)}\\
{\small \it $^3$School of Mathematics and Statistics, Guizhou University, Guiyang, 550025, P.R. China}\\
  {\small \tt jrwang@gzu.edu.cn (Jinrong Wang)}\\
 {\small \it $^4$School of Mathematics and Natural Science, }\\{\small \it The University of Southern Mississippi, Hattiesburg,
Mississippi 39406, USA}\\
 {\small \tt zhifu.xie@usm.edu (Zhifu Xie)}
 }
\date{}
 \maketitle
\noindent{\hrulefill} \vskip1mm\noindent
 {\bf Abstract}

While a non-concyclic rhombus can support a central configuration in the planar Newtonian four-body problem, such a configuration cannot serve as the base of a pyramidal central configuration in the spatial five-body problem. In this paper, we establish the existence of a new class of pyramidal central configurations in the spatial Newtonian six-body problem, characterized by a non-concyclic quadrilateral base. This geometric structure distinguishes the new configurations from the known pyramidal central configurations of the spatial five-body problem. We investigate several fundamental properties of these configurations and provide numerical examples illustrating the theoretical results in each case. Furthermore, we demonstrate that this new class of central configurations can be used to construct perverse solutions in three-dimensional space, revealing an additional connection between the geometry of central configurations and the dynamics of the Newtonian $N$-body problem.

\vskip1mm\noindent {\bf Keywords:} Newtonian 6-body problems; pyramidal central configurations;  rhombus base; square base; perverse solutions

\vskip1mm\noindent {\bf Mathematics Subject Classification} 70F10, 70F15

\section{Introduction} \label{sec1}
\setcounter{equation}{0}
Central configurations play a significant role in the analysis of collision orbits, the expansion of gravitational systems, and the limitations that affect the configurations assumed by the bounded motion of the $n$-body problem \cite{Saari1980}. The motion of $n$ bodies under Newton's law of universal gravitation is governed by the following system of second-order ordinary differential equations:
\begin{equation}\label{1.1}
    m_k \ddot{q}_k=\sum\limits_{{j\neq k\atop{{1\leq j\leq n}}}}
\frac{m_{j} m_{k}(q_{j}-q_{k})}{|q_{j}-q_{k}|^{3}},\quad \  \  k=1,2,\cdots,n.
\end{equation}
Homothetic and homographic orbits represent special classes of solutions in which the mass configuration remains self-similar over time, where the configuration space is defined as:
$$
\triangle=\bigcup_{1\leq k\neq s\leq n}\triangle_{k,s}=\{x=[x_{1},\,x_{2},\,\cdots,\,x_{n}]\in\mathbb{R}^{dn}\,|\,x_{k}\neq x_{s},\,1\leq k\neq s\leq n\},
$$
where $d = 2$ corresponds to a planar configuration and $d = 3$ to a spatial configuration.
%The study of these types of solutions has long attracted considerable interest in celestial mechanics.
Both of these two types of solutions arise from central configurations--specific arrangements of masses in which the gravitational acceleration on each body is proportional to its position vector relative to the center of mass.
In this paper, we investigate a class of spatial central configurations.  For $n$ positive masses $m_{1},\,m_{2},\,\cdots,\,m_{n}$ with corresponding positions $q=[q_{1},\,q_{2},\,\cdots,\,q_{n}]$, we first introduce the definition of central configuration \cite{Wintner1941} as follows.
\begin{definition}\label{def1.1}
A configuration $q=[q_{1},\,q_{2},\,\cdots,\,q_{n}]\in \triangle$ is called a \textbf{central configuration (CC)} for a positive mass vector $m=(m_1, m_2, \dots, m_n)$ if
there exists a constant $\lambda\in\mathbb{R}$ such that
\begin{eqnarray}\label{1.2}
\sum\limits_{{j\neq k\atop{{1\leq j\leq n}}}}
\frac{m_{j} m_{k}(q_{j}-q_{k})}{|q_{j}-q_{k}|^{3}}+\lambda m_{k}(q_{k}-q_{0})=0, \quad \  \  k=1,2,\cdots,n,
\end{eqnarray}
where the barycenter $q_{0}$ is given by $q_{0}=(\sum_{1\leq j\leq n}m_{j}q_{j})/M$ with $M=\sum_{1\leq j\leq n}m_{j}$.
\end{definition}

The study of central configurations in the Newtonian $n$-body problem dates back to the classical work of Euler. In 1767, Euler \cite{Euler1767} discovered the well-known collinear central configurations for the 3-body problem. In 1772, Lagrange \cite{Lagrange1772} identified the equilateral-triangle central configuration for the planar 3-body problem. % n both the Euler collinear and the Lagrange equilateral central configurations, the three masses forming the configuration may not be equal to each other.
 In 1985, Perko and Walter \cite{Perko1985} proved that for any $n \geq 4$, all masses in a regular $n$-gon central configuration must be equal. Wang later extended this result in 2019 \cite{Wang2019}, generalizing it from Newtonian potentials to arbitrary positive homogeneous potentials. For further reading, see \cite{Fernandes2017,Perez2007,Santoprete2021a,Santoprete2021b,Wang2023}.

\begin{figure}[htbp]\centering
\begin{minipage}[t]{0.42\linewidth}
\includegraphics[width=6.5cm,height=7.5cm]{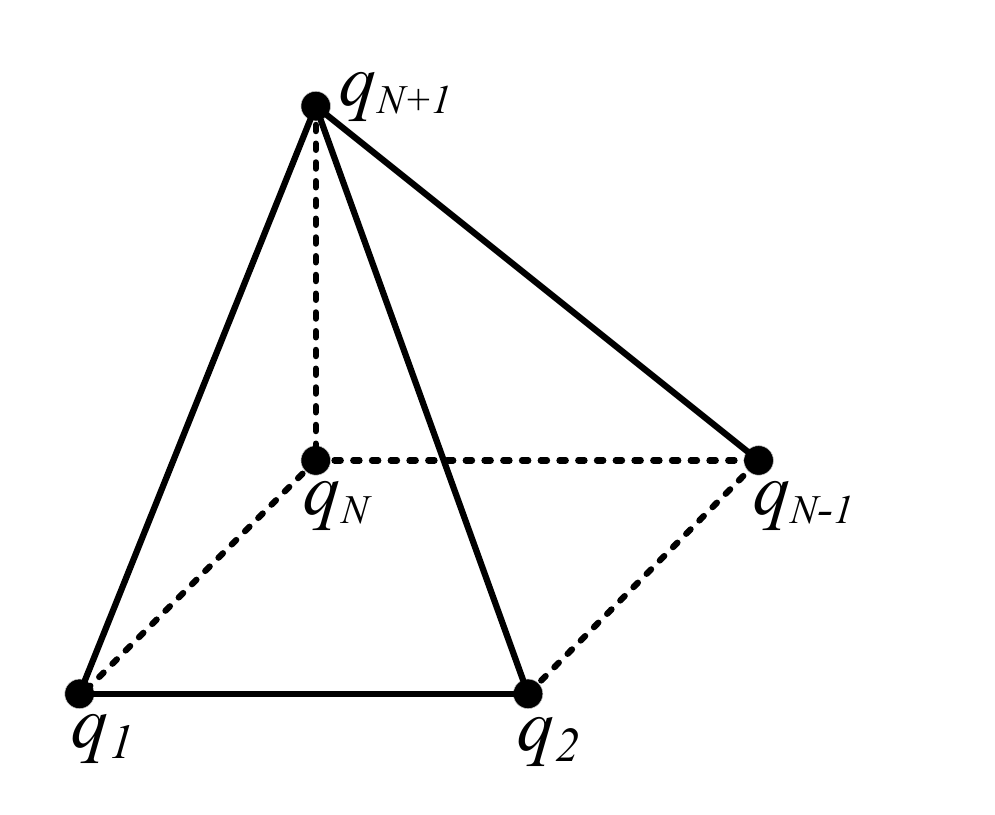}
\caption{Spatial ($N$+1)-body problems: Single Pyramidal CC}
\label{fig1}
\end{minipage}%
\ \ \ \
\begin{minipage}[t]{0.42\linewidth}
\includegraphics[width=8cm,height=8cm]{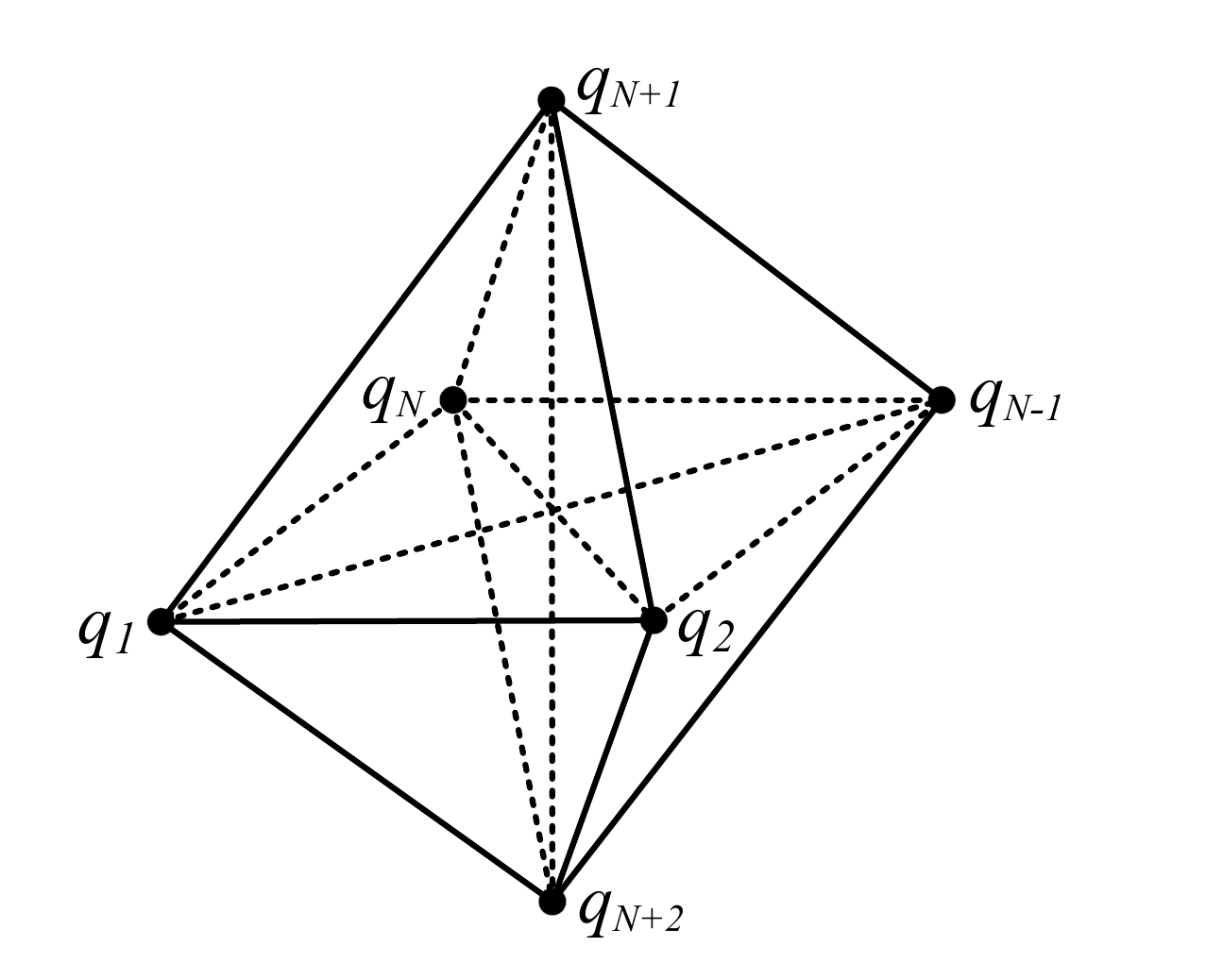}
\caption{Spatial ($N$+2)-body problems: Bi-pyramidal CC}
\label{fig2}
\end{minipage}
\end{figure}

For spatial central configurations, the simplest type is the single pyramidal central configuration, and its definition is described as the following \cite{Faycal1996}.
\begin{definition}\label{def1.2}
 %A central configuration of $N$+1 masses ($n=N+1$), $N$ of them are coplanar and the ($N$+1)-th mass is off the plane, is called a single pyramidal central configuration. Equivalently, we will say that the central configuration has the shape of a pyramid, with the $N$ coplanar masses forming the basis of the single pyramidal central configuration (see Figure {\color{blue}1}).
A central configuration of \( N+1 \) masses (\( n = N+1 \)), where \( N \) of them are coplanar and the (\( N+1 \))-th mass is positioned off the plane, is called a \textbf{single pyramidal central configuration}. Equivalently, we describe such a central configuration as having the shape of a pyramid, with the \( N \) coplanar masses forming the base of the pyramid (see Figure \ref{fig1}).
\end{definition}

When $n=4$, Lehmann-Filh\'{e}s and Pizzetti \cite{Lehmann1891,Pizzetti1904} demonstrated that the regular tetrahedron is the unique single pyramidal central configuration, having an equilateral triangle as its base. For $n=5$, Fay\c{c}al \cite{Faycal1996} proved that there exist exactly three types of single pyramidal central configurations, and all of the bases are concyclic.

Also for the case of $n=5$, Mello and Fernandes \cite{Mello2011} obtained the existence of a class of bi-pyramidal central configurations with an equilateral triangular base. The definition of a bi-pyramidal central configuration is as follows:
\begin{definition}\label{def1.3}
 %A central configuration of $N$+2 masses ($n=N+2$), where $N$ of them are coplanar and the remaining two masses are off the plane, is called a bi-pyramidal central configuration  (see Figure {\color{blue}2}).

 A central configuration of \( N+2 \) masses (\( n = N+2 \)), where \( N \) of them are coplanar and the remaining two masses are positioned off the plane, is called a \textbf{bi-pyramidal central configuration} (see Figure \ref{fig2}).

\end{definition}

Note that in the well-known work of \cite{Longley1907}, Longley discovered the existence of planar central configurations in the Newtonian four-body problem, where four masses are placed at the vertices of a non-concyclic rhombus. Moreover, in the spatial Newtonian five-body problem, if the base is not concyclic, the five masses cannot form a single pyramidal central configuration \cite{Faycal1996}.  Additionally, in spatial Newtonian \( n \)-body problems, all previously reported bi-pyramidal central configurations \cite{Corbera2013, Leandro2003, Mello2011, Zhang2001} have a concyclic base.

In this paper, we investigate a new class of central configurations in Newtonian 6-body problems. More precisely, we consider configurations where masses \( m_1, m_2, m_3, m_4 \) are positioned counterclockwise at the vertices of a rhombus  with corresponding positions \( q_1, q_2, q_3, q_4 \), while at least one additional mass \( m_k \) (with position \( q_k \), where \( k\in\{5,6\} \)) is located outside the plane containing the rhombus.

Moreover, recall that central configurations can generate homographic/homothetic solutions to the Newtonian $n$-body problem, let $\bar{q}(t) = (\bar{q}_{1}(t), \bar{q}_{2}(t), \ldots, \bar{q}_{n}(t))$ be a solution corresponding to masses  $(m_{1}, m_{2}, \ldots,$ $ m_{n})$ under Newtonian equations (\ref{1.1}). In 2001, Chenciner~\cite{Chenciner2004} posed two questions: (Q1)  Does there exist another set of masses \((m^{\prime}_{1}, m^{\prime}_{2}, \ldots, m^{\prime}_{n})\) such that \(\bar{q}(t)\) remains a solution? (Q2) The same as (Q1) but insisting that the total mass and the center of mass remain unchanged. If the answer to the first question (resp. the second) is yes, \(\bar{q}(t)\) is called a \emph{perverse solution} (resp. a \emph{really perverse solution}). He demonstrated that for planar Newtonian \(n\)-body problems with \(n \geq 3\), such perverse solutions do exist. In 2004, Ouyang, Xie, and Zhang~\cite{Ouyang2004} constructed perverse solutions in the spatial case for \(n = 5, 6, 7, 8, 9, 10\) using single pyramidal central configurations. More recently, in 2025, Bang, Chenciner, and Sim\'o~\cite{Bang2025} proved the existence of really perverse solutions in the planar case, based on nested regular polygon configurations with one mass at the center. In section \ref{sec9}, we prove the existence of perverse solutions in the spatial case, constructed from the new class of central configurations in the 6-body problems. This configuration is unique upto rotation, scaling and translation, but these perverse solutions are not really perverse.

\section{Main results}\label{sec2}\setcounter{equation}{0}

\subsection{Configurations with \texorpdfstring{$q_5$}{R} and  \texorpdfstring{$q_6$}{R}  on the opposite sides of the rhombus plane }

\begin{figure}[htbp]\centering
\begin{minipage}[t]{0.45\linewidth}
\includegraphics[width=8cm,height=8.5cm]{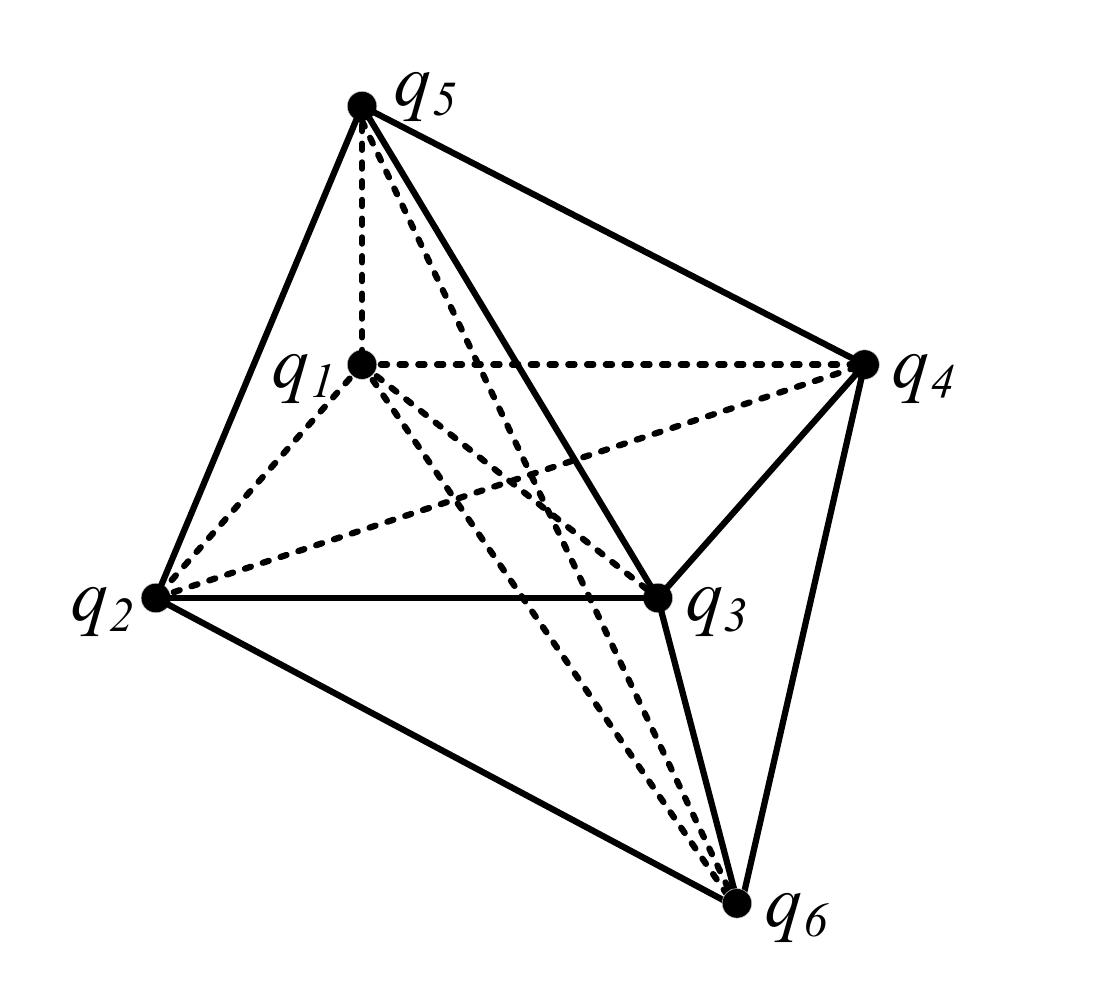}
\caption{Spatial 6-body problems: $q_{5}$ and $q_{6}$ lie on opposite sides of the rhombus}
\label{fig3}
\end{minipage}%
\ \ \ \
\begin{minipage}[t]{0.45\linewidth}
\includegraphics[width=8cm,height=8.5cm]{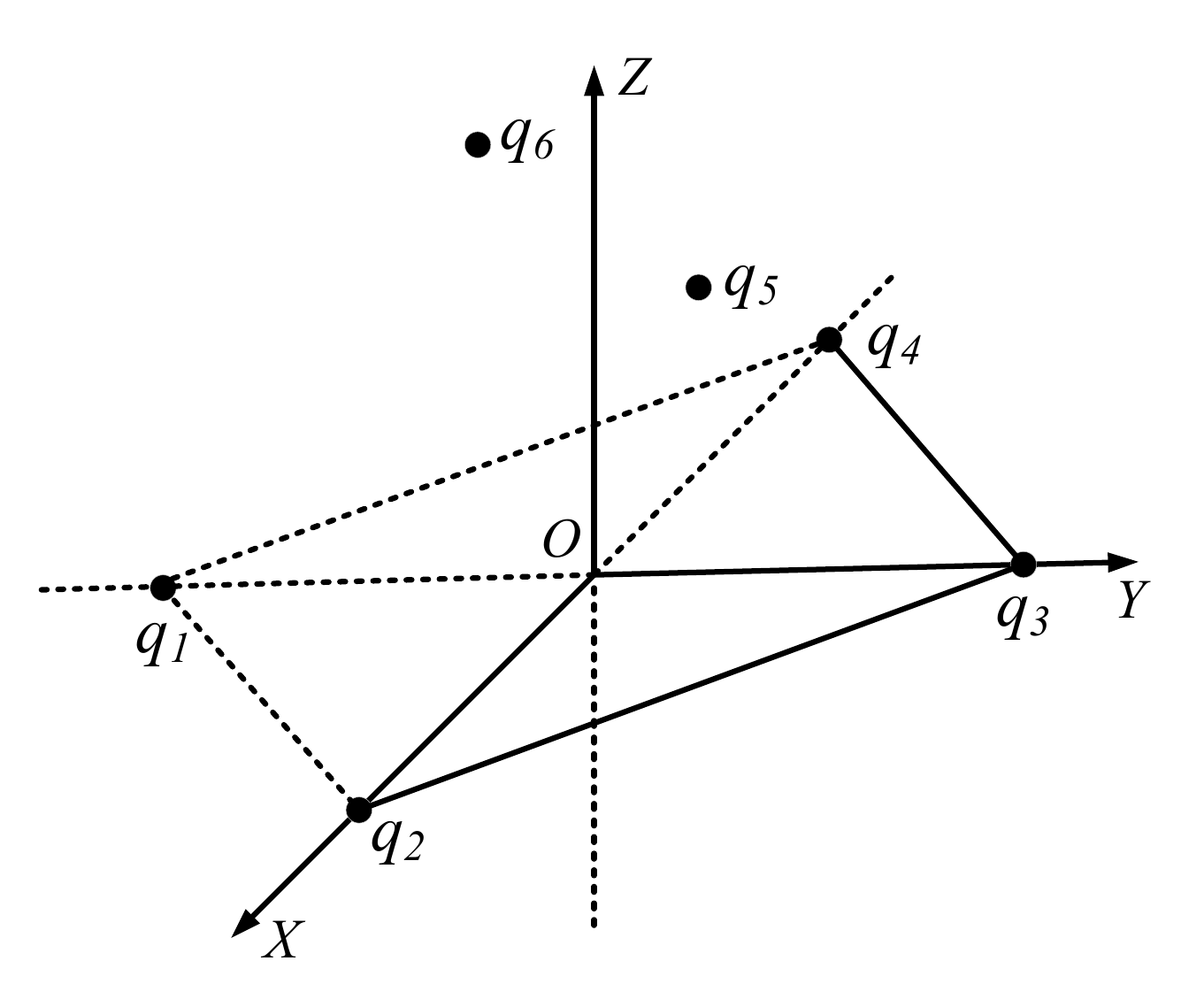}
\caption{Spatial 6-body problems: $q_{5}$ and $q_{6}$ lie on the same side of the rhombus but outside its base plane}
\label{fig4}
\end{minipage}
\end{figure}

\begin{figure}[htbp]\centering
\begin{minipage}[t]{0.45\linewidth}
\includegraphics[width=8cm,height=8.5cm]{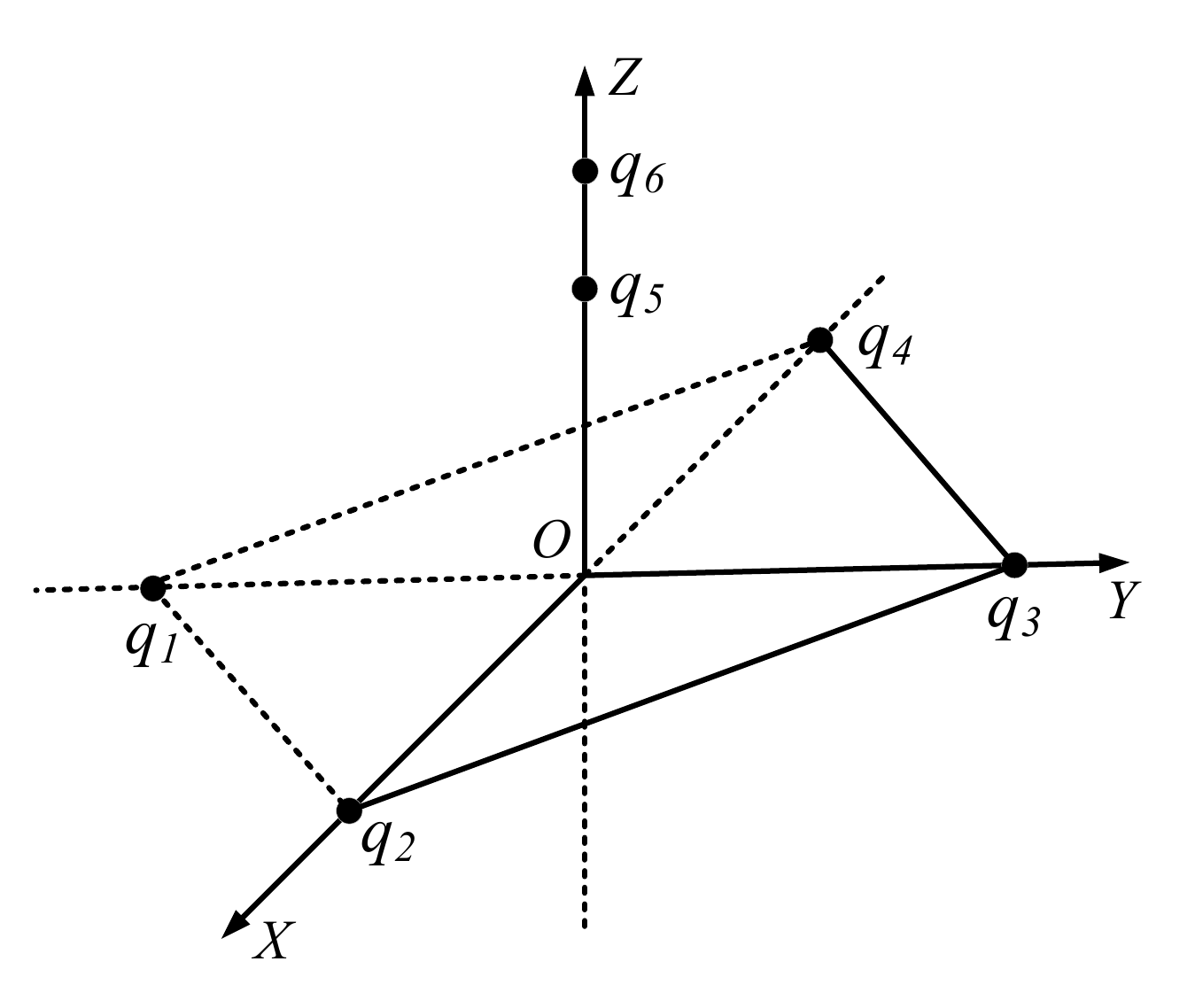}
\caption{Spatial 6-body problems: $q_{5}$ and $q_{6}$ lie on the same side of the rhombus but outside its base plane, with $q_{5}$ lies on the $Z$-axis}
\label{fig5}
\end{minipage}%
\ \ \ \ \ \ \
\begin{minipage}[t]{0.45\linewidth}
\includegraphics[width=8.2cm,height=8.6cm]{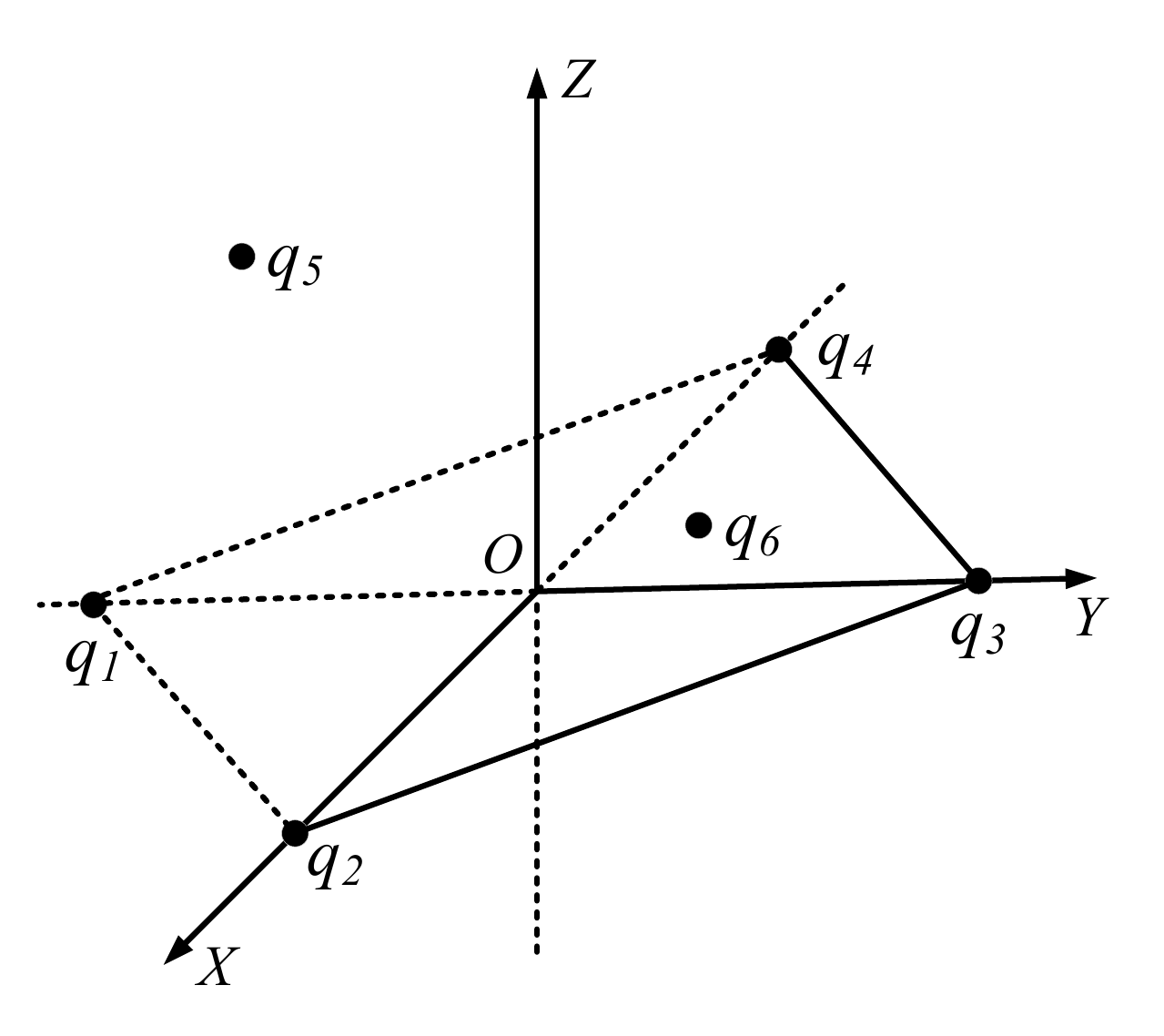}
\caption{Spatial 6-body problems: $q_{6}$ lies in the plane containing the rhombus (This CC does not exist)}
\label{fig6}
\end{minipage}
\end{figure}

We consider the two masses located off the plane containing the rhombus (see Figure \ref{fig3}). The diagonals \( q_{1}q_{3} \) and  \( q_{2}q_{4} \) intersect at the origin
$O=(0,0,0)$, and the positions $[q_{1}, q_{2},\cdots,q_{5},q_{6}]$ of the six positive masses $(m_{1}, m_{2},\cdots,m_{5},m_{6})$ are given by:
\begin{eqnarray}\label{2.1}
\left\{
  \begin{array}{ll}
q_1=(0,-a, 0), \,\, q_2=(1,0,0), \,\,q_3=(0, a, 0), \\[0.2cm]
q_4=(-1,0,0), \,\,q_5=\left(x_5, y_5, z_5\right),\,\, q_6=\left(x_6, y_6, z_6\right),
  \end{array}
\right.
\end{eqnarray}
where $a>0$, $x_5, y_5, x_6,y_6\in\mathbb{R}$, $z_5>0$ and $z_6<0$. Then, we have the following two results.

\begin{theorem}\label{thm2.1}
Let the configuration \( q = [q_1, q_2, \cdots, q_6] \) be defined as in equation (\ref{2.1}) with $a>0$, $x_5, y_5, x_6,y_6\in\mathbb{R}$, $z_5>0$ and $z_6<0$. If \( q \) is a pyramidal central configuration for the mass vector \( m = (m_1, m_2, \cdots, m_6) \), then only the following three cases hold:
 \begin{enumerate}
    \item \( x_5 x_6>0, y_5 = y_6 = 0.\)  In this case \(m_3=m_1. \)
    \item  \( y_5 y_6>0, x_5 = x_6 = 0.\) Then \( m_4=m_2. \)
    \item   \(  x_5 = x_6 = y_5=y_6=0.\) Then \( m_4=m_2,\) and  \(m_3=m_1. \)
\end{enumerate}

\end{theorem}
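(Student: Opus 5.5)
The plan is to work only with the $z$-components of the central configuration equations (\ref{1.2}) for the four coplanar bodies $k=1,2,3,4$, since these alone pin down the positions of $q_5,q_6$ relative to the rhombus. Because $q_1,\dots,q_4$ lie in the plane $z=0$, for $k\le 4$ the $z$-component of (\ref{1.2}) reads
\begin{equation*}
m_5\frac{z_5}{r_{k5}^3}+m_6\frac{z_6}{r_{k6}^3}=\lambda z_0,\qquad z_0=\frac{m_5z_5+m_6z_6}{M},
\end{equation*}
with $r_{kj}=|q_k-q_j|$. The right-hand side does not depend on $k$, so the four left-hand sides are equal. Setting $A=m_5z_5>0$ and $B=-m_6z_6>0$, we obtain
\begin{equation*}
A\Big(\frac{1}{r_{15}^3}-\frac{1}{r_{35}^3}\Big)=B\Big(\frac{1}{r_{16}^3}-\frac{1}{r_{36}^3}\Big),\qquad
A\Big(\frac{1}{r_{25}^3}-\frac{1}{r_{45}^3}\Big)=B\Big(\frac{1}{r_{26}^3}-\frac{1}{r_{46}^3}\Big).
\end{equation*}
Next compute $r_{15}^2-r_{35}^2=4ay_5$ and $r_{25}^2-r_{45}^2=-4x_5$, and likewise for $q_6$. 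Since $t\mapsto t^{-3/2}$ is strictly decreasing and $A,B>0$, the first identity gives $\operatorname{sign}(y_5)=\operatorname{sign}(y_6)$, and the second gives $\operatorname{sign}(x_5)=\operatorname{sign}(x_6)$. Here the sign function takes values in $\{-1,0,1\}$, so zeros are matched as well. Thus either $y_5=y_6=0$ or $y_5y_6>0$, and similarly for the $x$-coordinates.

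The remaining task is to exclude the case where both $x_5x_6>0$ and $y_5y_6>0$, and then to derive the mass equalities. For the masses, use the in-plane components of (\ref{1.2}) for the bodies in the plane $z=0$.

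Consider case 1, where $y_5=y_6=0$. Then $q_5$ and $q_6$ lie in the plane $y=0$, which is a plane of symmetry exchanging $q_1$ and $q_3$. Take the $y$-component of (\ref{1.2}) for $k=2$ and for $k=4$. The contributions from $q_5$ and $q_6$ vanish, and the contributions from $q_1,q_3$ are proportional to $a(m_1-m_3)$. Combining these with $y_0=a(m_3-m_1)/M$, body $2$ gives $a(m_1-m_3)\big(-1/r_{12}^3-\lambda/M\big)=0$. I would then show $\lambda>0$, using the standard identity $\lambda=U/I>0$ for central configurations obtained by dotting (\ref{1.2}) with $q_k-q_0$ and summing. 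This yields $m_1=m_3$. Case 2 is identical with the roles of $x$ and $y$ swapped, and case 3 combines the two.

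The main obstacle is excluding the generic case $x_5x_6>0$, $y_5y_6>0$. My first approach would be to use the $x$- and $y$-components of (\ref{1.2}) for $k=1,3$ and $k=2,4$, together with the $z$-equalities above. I would try to show that the rhombus bodies then feel a net in-plane force inconsistent with $\lambda(q_k-q_0)$. Concretely, both apexes lie in the same open quadrant, say $x,y>0$. Subtracting the equations for $q_1$ and $q_3$, and likewise for $q_2$ and $q_4$, should produce sign-definite expressions, because the quadrant-biased pulls on opposite vertices have opposite monotonicity. If a direct sign argument fails, I would fall back on projecting the full system onto the diagonal directions and comparing with the first-paragraph identities to force $x_5=0$ or $y_5=0$.
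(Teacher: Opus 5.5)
Your sign-matching step, which equates the $z$-equations at $q_1,\dots,q_4$, is exactly the paper's steps (A) and (B). Your plan for excluding $x_5x_6>0$, $y_5y_6>0$ is also the paper's, and it is only a one-line check. The $y$-equations at $q_2$ and $q_4$ differ only in the apex terms, so $m_5y_5(D_{45}^{-3}-D_{25}^{-3})+m_6y_6(D_{46}^{-3}-D_{26}^{-3})=0$. When $x_5,x_6$ have the same nonzero sign, both brackets have the same nonzero sign, which is incompatible with $y_5y_6>0$.

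The genuine gap is in the mass equalities. With $y_5=y_6=0$, the $y$-component of (\ref{1.2}) at $q_2$ is
\[
\frac{a(m_3-m_1)}{(1+a^2)^{3/2}}=\lambda y_0=\frac{\lambda a(m_3-m_1)}{M},
\quad\text{i.e.}\quad
a(m_3-m_1)\Bigl(\frac{1}{(1+a^2)^{3/2}}-\frac{\lambda}{M}\Bigr)=0,
\]
so the $\lambda$-term in your version has the wrong sign. The pull of the unequal pair $q_1,q_3$ on $q_2$ and the vector $q_0-q_2$ both point toward the heavier of $m_1,m_3$, so nothing is contradictory here. The equation only gives $m_1=m_3$ \emph{or} $\lambda/M=(1+a^2)^{-3/2}$. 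Knowing $\lambda=U/I>0$ says nothing about the second alternative.

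Ruling out that degenerate branch is the real content of the proof, and it needs more equations. The paper adds the $y$-equations at $q_1$ and $q_3$ to get $(m_1-m_3)(8\lambda a^3-M)=0$. Hence $m_1\ne m_3$ forces $(1+a^2)^{3/2}=8a^3$, i.e.\ $a=\sqrt3/3$ and $\lambda/M=3\sqrt3/8$. The $y$-equations at $q_5,q_6$ then force $x_5^2+z_5^2=x_6^2+z_6^2=1$. Finally, the $z$- and $x$-equations at $q_2,q_4$ contradict positivity of the masses.

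Once these facts are in hand, with $x_5,x_6>0$, the contradiction is quick. Write the $x$-equation at $q_4$ as $\sum_{j\ne 4}m_j(x_j-x_4)(D_{4j}^{-3}-\lambda/M)=0$. Its $j=1,3$ terms vanish and its $j=2,5,6$ terms are strictly negative.

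The same degenerate branch must also be excluded in cases 2 and 3. The paper handles case 3 by a separate computation: there $m_2\ne m_4$ forces $a^2=z_5^2=z_6^2=3$ before a contradiction is reached.
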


\begin{remark}\label{rem2.1}
It is well known that for the single pyramidal central configurations in Newtonian 5-body problems, no such configuration exists when four masses are positioned at the vertices of a non-square rhombus.
However, for bi-pyramidal central configurations of Newtonian 6-body problems, such configurations can occur even when the four base masses form a non-square rhombus, and we present two examples below.

\begin{example}\label{exa2.1} \textbf{($a\neq1$).} In equations (\ref{1.2}), set $n=6$, $x_{5}=x_{6}=0.1$, $y_{5}=y_{6}=0$, $z_{5}=-z_{6}=0.86628148$, $a=0.9$, $m_{1}=m_{3}=1$, $m_{2}=0.258539199$, $m_{4}=0.853441220$, $m_{5}=m_{6}=1.045160300$. Then, according to Definition \ref{def1.1}, equation (\ref{2.1}) and direct computation using Maple, there exists $\lambda=1.861715304$ such that the six masses form a bi-pyramidal central configuration.
\end{example}

\begin{example}\label{exa2.2} \textbf{($a=1$).} In equations (\ref{1.2}), we take $n=6$, $x_{5}=x_{6}=0.1$, $y_{5}=y_{6}=0$, $z_{5}=-z_{6}=0.867472$, $a=1$, $m_{1}=m_{3}=1$, $m_{2}=0.541763934$, $m_{4}=1.49858863$, $m_{5}=m_{6}=1.4961658$. Then, by Definition \ref{def1.1}, equation (\ref{2.1}) and direct computation with Maple, there exists $\lambda=2.250204683$ such that the six masses form a bi-pyramidal central configuration.
\end{example}
\end{remark}

 If \(  x_5 = x_6 = y_5=y_6=0\), we have the following theorem.
\begin{theorem}\label{thm2.2}
Let the configuration \( q = [q_1, q_2, \cdots, q_6] \) be defined as in equation (\ref{2.1}) with \( x_5 = y_5 = x_6 = y_6 = 0,  \, z_5>0 \) and $z_6<0$, meaning that \( m_5 \) and \( m_6 \) lie on the symmetric \( z \)-axis. Assume that \( q \) is a bi-pyramidal central configuration for the mass vector \( m = (m_1, m_2, \cdots, m_6) \). Then, the following statements hold:
\begin{enumerate}
    \item If \( z_6 = -z_5 \), then \( m_3 = m_1 \), \( m_4 = m_2 \), and \( m_6 = m_5 \).
    \item If \( z_6 \neq -z_5 \), then \( a = 1 \) (the rhombus must be a square) and \( m_1 = m_2 = m_3 = m_4 \).
\end{enumerate}
\end{theorem}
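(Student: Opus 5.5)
The plan is to write out the central configuration equations (\ref{1.2}) explicitly for the positions in (\ref{2.1}) with $q_5=(0,0,z_5)$ and $q_6=(0,0,z_6)$, and to exploit the fact that $q_5,q_6$ sit on the common axis of symmetry of the rhombus. Write $q_0=(x_0,y_0,z_0)$ with $Mx_0=m_2-m_4$, $My_0=a(m_3-m_1)$ and $Mz_0=m_5z_5+m_6z_6$. The $x$- and $y$-components of the equations at $q_5$ and $q_6$ give, after dividing by $m_5$ (resp.\ $m_6$),
\begin{equation*}
(m_2-m_4)\Big[\tfrac{1}{(1+z_k^2)^{3/2}}-\tfrac{\lambda}{M}\Big]=0,\qquad a(m_3-m_1)\Big[\tfrac{1}{(a^2+z_k^2)^{3/2}}-\tfrac{\lambda}{M}\Big]=0,\qquad k=5,6 .
\end{equation*}
The $z$-components at $q_1,\dots,q_4$ show that the function $F(s)=m_5z_5(s+z_5^2)^{-3/2}+m_6z_6(s+z_6^2)^{-3/2}$ satisfies $F(a^2)=\lambda z_0=F(1)$. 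These identities, together with the in-plane components at $q_1,\dots,q_4$ and the $z$-components at $q_5,q_6$, are the whole toolkit.

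\textbf{Case $z_6\neq -z_5$.} Here $|z_5|\neq|z_6|$, so $(1+z_5^2)^{-3/2}\neq(1+z_6^2)^{-3/2}$. These two numbers cannot both equal $\lambda/M$, so the first displayed identity forces $m_2=m_4$. The same argument with $a^2$ in place of $1$ forces $m_1=m_3$. Then $q_0=(0,0,z_0)$, and the in-plane equations at $q_1$ and $q_2$ reduce to two scalar equations of Longley type:
\begin{itemize}
\item the $x$-equation at $q_2$ contains the terms $m_2/4$, $2m_1(1+a^2)^{-3/2}$ and $F$-like contributions $m_5(1+z_5^2)^{-3/2}+m_6(1+z_6^2)^{-3/2}$, all balanced against $\lambda$;
\item the $y$-equation at $q_1$ has the analogous terms with $a$.
\end{itemize}
Combined with $F(a^2)=F(1)$ and the $z$-equations at $q_5,q_6$, I would aim to show $a=1$. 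The idea is to suppose $a\neq 1$ and use $F(a^2)=F(1)$ to pin $\lambda z_0$ by a mean-value-type relation. Then I would subtract $1/a$ times the $y$-equation from the $x$-equation and get a sign contradiction, using that $s\mapsto (s+z^2)^{-3/2}$ is strictly decreasing. Once $a=1$, the configuration has the symmetry swapping the diagonals, and the two reduced planar equations become a linear system in $(m_1,m_2)$ whose difference gives $(m_1-m_2)\cdot(\text{nonzero factor})=0$. Hence $m_1=m_2=m_3=m_4$.

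\textbf{Case $z_6=-z_5$.} Now $|q_5-q_j|=|q_6-q_j|$ for all base vertices. The $z$-components at $q_2$ and $q_1$ become $z_5(m_5-m_6)\big[(s+z_5^2)^{-3/2}-\lambda/M\big]=0$ with $s=1$ and $s=a^2$. Similarly, the displayed identities say that $m_2\neq m_4$ forces $\lambda/M=(1+z_5^2)^{-3/2}$, and $m_1\neq m_3$ forces $\lambda/M=(a^2+z_5^2)^{-3/2}$.

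So the whole case reduces to excluding the ``resonant'' value $\lambda=M(1+z_5^2)^{-3/2}$, and likewise $\lambda=M(a^2+z_5^2)^{-3/2}$. This is the step I expect to be the main obstacle. My first attempt would be to fix $\lambda$ at the resonant value and take the difference of the $x$-equations at $q_2$ and $q_4$. There the $q_5,q_6$ contributions cancel exactly against the resonant $\lambda$ term, leaving a relation purely among $m_1,\dots,m_4$ and $a$ whose sign is definite. This should be compared with the virial identity $\lambda=U/I$ to reach a contradiction. Excluding the resonant value gives $m_2=m_4$ and $m_1=m_3$; the analogous argument applied to the $z$-equations gives $m_5=m_6$.
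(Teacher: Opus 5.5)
\textbf{Statement 2.} There is a genuine gap here, at exactly the step that carries the content: you never prove $a=1$, and the mechanism you sketch cannot prove it. Several parts are fine. Your derivation of $m_1=m_3$ and $m_2=m_4$ from the $x$- and $y$-equations at $q_5,q_6$ when $|z_5|\neq|z_6|$ is correct, and it is a bit more direct than the paper's appeal to Theorem~\ref{thm2.1}. Your final step $a=1\Rightarrow m_1=m_2$ matches the paper.

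Since $x_0=y_0=0$, combining the $x$-equation at $q_2$ with $\tfrac1a$ times the $y$-equation at $q_1$ eliminates $\lambda$ altogether. So whatever $F(a^2)=F(1)$ tells you about $\lambda z_0$ never enters. What remains is
\[
m_1\Big(\frac{1}{4a^3}-2L\Big)+m_2\Big(2L-\frac14\Big)+\sum_{k=5,6}m_k\Big[(a^2+z_k^2)^{-3/2}-(1+z_k^2)^{-3/2}\Big]=0,\qquad L=(1+a^2)^{-3/2}.
\]
For $a>1$ the $m_1,m_5,m_6$ terms are negative, but the $m_2$ coefficient is positive as long as $a<\sqrt3$. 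So this identity excludes only $a\ge\sqrt3$ (and, symmetrically, $a\le\sqrt3/3$). It cannot do more: it also holds for the configurations of Theorem~\ref{thm2.3} and Example~\ref{exa2.3}, which have $a\neq1$ and $z_6=-z_5$.

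Any proof of $a=1$ must therefore use $z_6\neq-z_5$, through $F(a^2)=F(1)$ together with the $z$-equations at $q_5,q_6$. The paper attempts this with four ingredients:
\begin{enumerate}
\item Lemma~\ref{lem4.1}: the sign of $m_5z_5+m_6z_6$ is that of $z_5+z_6$.
\item Lemma~\ref{lem4.2}: $|z_5-z_6|^{-3}<\lambda/M<\min_k(a^2+z_k^2)^{-3/2}$.
\item The bound (\ref{4.17}).
\item A monotonicity comparison of $g$ with $\widetilde g$.
\end{enumerate}
That last comparison does not go through as printed. Substituting (\ref{4.17}) into $g$, the second bracket is $M(1+z_6^2)^{-3/2}-(2m_1+2m_2+m_5)(1+z_5^2)^{-3/2}-m_6|z_5-z_6|^{-3}$, not $m_6[(1+z_6^2)^{-3/2}-|z_5-z_6|^{-3}]$. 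With the correct bracket, (\ref{4.20}) does not follow. In fact, $|z_5-z_6|^{-3}<\lambda/M$ and the monotonicity of $\widetilde g$ give the reverse inequality. So this step genuinely needs an argument of its own; you cannot close it by copying that comparison.

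\textbf{Statement 1.} Your reduction to excluding resonant values of $\lambda/M$ is right. For $a\neq1$, your argument for $m_5=m_6$ is the paper's (via $f_{z_1}-f_{z_2}$). Your proposed exclusion of the resonance, however, fails as stated. With $\lambda=MN$ and $N=(1+z_5^2)^{-3/2}$,
\[
f_{x_2}-f_{x_4}=2(m_1+m_3)(N-L)+(m_2+m_4)\Big(2N-\frac14\Big),
\]
which still depends on $z_5$ and has no fixed sign.

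The missing input is the $x$-equation at $q_1$, namely $(m_2-m_4)(L-\lambda/M)=0$. It forces $N=L$, i.e.\ $z_5=a$. Then $f_{x_4}=m_2(\tfrac14-2L)=0$ gives $a^2=z_5^2=3$ and $\lambda=M/8$, which is still not a contradiction. The paper (case (iv) of Lemma~\ref{lem3.1}) continues: $f_{z_1}$ and $f_{y_5}$ now force $m_5=m_6$ and $m_1=m_3$. After that, $f_{y_1}/a=m_1(\tfrac{1}{12\sqrt3}-\tfrac14)+m_5(\tfrac{1}{3\sqrt6}-\tfrac14)<0$, which is the contradiction. The virial identity is a linear combination of the central configuration equations, so it cannot substitute for this chain.

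Likewise, when $a=1$ the resonance $\lambda/M=(1+z_5^2)^{-3/2}$ with $m_5\neq m_6$ is not removed by ``the analogous argument.'' The paper uses $f_{z_5}+f_{z_6}=(m_5-m_6)\big[\frac{1}{4z_5^2}-\frac{2\lambda z_5}{M}\big]$ to obtain a second resonance $\lambda/M=(2z_5)^{-3}$. This forces $z_5=\sqrt3/3$, and then $f_{x_2}\neq0$.
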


\begin{remark}\label{rem2.2}
Regarding Theorem \ref{thm2.2}, we can illustrate with two examples such that the conditions
$z_{5} =-z_{6}$ and $z_{5}\neq -z_{6}$ are satisfied, respectively.

\begin{example}\label{exa2.3} \textbf{($z_{5} =-z_{6}$).} In equations (\ref{1.2}), we set $n=6$, $x_{5}=x_{6}=y_{5}=y_{6}=0$, $z_{5}=-z_6=0.78554675$,  $a=0.9291884$, $m_{1}=m_{3}=2, $ $m_{2}=m_{4}=1$, $m_{5}=m_{6}=3$. Then, there exists $\lambda=4.740327950$ such that the six masses form a pyramidal central configuration.
\end{example}

\begin{example}\label{exa2.4} \textbf{($z_{5}\neq -z_{6}$).} In equations (\ref{1.2}), we set $n=6$, $x_{5}=x_{6}=y_{5}=y_{6}=0$, $z_{5}=1.2$, $z_{6}=-1.1$, $a=1$, $m_{1}=m_{2}=m_{3}=m_{4}=1$, $m_{5}=0.691189971$, $m_{6}=0.216916499$. Then, there exists $\lambda=1.204479102$ such that the six masses form a pyramidal central configuration.
\end{example}
\end{remark}

Moreover, for the existence of the bipyramidal central configuration with a non-concyclic quadrilateral base in Theorem \ref{thm2.2}, we prove that

\begin{theorem}\label{thm2.3}
Let the configuration \( q = [q_1, q_2, \cdots, q_6] \) be defined as in equation (\ref{2.1}) with $a>0$, $x_5, y_5, x_6,y_6\in\mathbb{R}$, $z_5>0$ and $z_6<0$. For any three pairwise distinct positive numbers $m_{1}=m_{3}>0$, $m_{2}=m_{4}>0$ and $m_{5}=m_{6}>0$, there exist \( x_5 = y_5 = x_6 = y_6 = 0 \),  $z_6 = -z_5$ where $z_{5}\in\{z_{5}\in\mathbb{R}^{+}: z_{5}\neq1 \  {\rm and} \ z_{5} \neq a\}$, $0<a\neq1$ and $\lambda\in\mathbb{R}$ such that the six masses form a central configuration.
\end{theorem}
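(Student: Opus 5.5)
By the symmetry of the ansatz, the proof reduces to solving two scalar equations in the two unknowns $a$ and $z:=z_5$. I would first settle existence for the given masses, then use the distinctness of the masses to rule out the excluded values $a=1$, $z=1$, $z=a$.

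\textbf{Reduction.} Impose $x_5=y_5=x_6=y_6=0$, $z_6=-z_5=-z$, $m_1=m_3=\mu_1$, $m_2=m_4=\mu_2$, $m_5=m_6=\mu_3$. The six bodies then sit at the vertices $\pm a e_2$, $\pm e_1$, $\pm z e_3$ of an octahedron with semi-axes $(r_1,r_2,r_3)=(1,a,z)$, and $q_0=O$. The configuration is invariant under the reflections $x\mapsto -x$, $y\mapsto -y$, $z\mapsto -z$, and equal masses sit at mirror positions. Hence every component of (\ref{1.2}) orthogonal to a body's axis vanishes automatically. The two bodies on the same axis give the same equation. A direct computation then shows that (\ref{1.2}) is equivalent to the three scalar equations
\begin{equation*}
\lambda r_i=\frac{\mu_i}{4r_i^2}+\sum_{j\neq i}\frac{2\mu_j r_i}{(r_i^2+r_j^2)^{3/2}},\qquad i=1,2,3 .
\end{equation*}
Using $i=1$ (where $r_1=1$) to define $\lambda$, the system reduces to two equations $F_1(a,z)=0$ and $F_2(a,z)=0$, coming from $i=2$ and $i=3$, in the unknowns $(a,z)\in(0,\infty)^2$.

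\textbf{Existence.} Each $F_k$ is linear in $(\mu_1,\mu_2,\mu_3)$. So for fixed $(a,z)$ the system is a homogeneous $2\times3$ linear system, and its kernel determines the mass ratio $[\mu_1:\mu_2:\mu_3]=\Phi(a,z)$ as a cross product of the coefficient rows. I would prove that $\Phi$ maps $(0,\infty)^2$ onto the open simplex of positive ratios.

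First, I would check that on a suitable region the kernel is positive. Then I would study the boundary behaviour:
\begin{itemize}
\item as $a\to0$ or $a\to\infty$, and likewise for $z$, the collision or escape of one axis pair forces the corresponding mass ratio to tend to $0$ or to $\infty$;
\item on the diagonals $a=1$, $z=1$, $z=a$ the ratio lies on the lines $\mu_1=\mu_2$, $\mu_2=\mu_3$, $\mu_1=\mu_3$ respectively (see below).
\end{itemize}
With this boundary information, a degree (Brouwer) argument on an exhausting family of compact curvilinear polygons gives surjectivity. Alternatively, for fixed masses I would solve $F_2=0$ for $z=z(a)$ by the intermediate value theorem, using the opposite signs of $F_2$ as $z\to0^+$ and $z\to\infty$. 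I would then apply the intermediate value theorem to $a\mapsto F_1(a,z(a))$ between $a\to0$ and $a\to\infty$.

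\textbf{Excluding the degenerate values.} If $a=1$, subtract the $i=1$ and $i=2$ equations. The $\mu_3$ terms cancel, and what remains is $(\mu_1-\mu_2)\bigl(\tfrac14-2\cdot2^{-3/2}\bigr)=0$. The bracket is nonzero, so $\mu_1=\mu_2$. By the permutation symmetry of the axes, $z=1$ forces $\mu_2=\mu_3$ and $z=a$ forces $\mu_1=\mu_3$. Since the masses are pairwise distinct, every solution satisfies $a\neq1$, $z\neq1$ and $z\neq a$, which is the statement.

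\textbf{Main obstacle.} The hard step is the existence argument. The sign and limit analysis of $F_1$ and $F_2$ at the ends of $(0,\infty)$, the monotonicity or uniqueness needed to make $z(a)$ a continuous branch, and the positivity of the kernel all require careful asymptotics of terms like $(r_i^2+r_j^2)^{-3/2}$. I would try the degree argument first, because it avoids proving monotonicity: it only needs the boundary correspondence between the quadrant edges and the simplex edges. Those edges consist of the limits $a,z\to0,\infty$ together with the three diagonal lines, which $\Phi$ sends onto the three median lines of the simplex.
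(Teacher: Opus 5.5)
Your reduction to the three axial equations, which is (\ref{5.1}), is correct and agrees with the paper. So is your exclusion of $a=1$, $z=1$, $z=a$ from pairwise distinctness; the paper does the same by evaluating $F(1,v)$, $F+G$ at $v=1$, and $G$ at $u=v$. There is one small slip. In your displayed formula the axis of half-length $r_i$ carries $\mu_i$, so with $(r_1,r_2,r_3)=(1,a,z)$ the unit axis should carry $\mu_1$. Your statements about which masses must coincide are still right for the placement you chose.

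The genuine gap is existence, which you leave as a plan. Your preferred degree argument for the mass map $\Phi$ rests on a boundary picture that does not hold. Write $\Lambda_1,\Lambda_a,\Lambda_z$ for the three left-hand sides of (\ref{5.1}): the values of $\lambda$ demanded by the bodies on the unit axis, the $a$-axis and the $z$-axis. Then
\[
\Lambda_a-\Lambda_1=m_1\Bigl(\frac{1}{4a^3}-\frac{2}{(1+a^2)^{3/2}}\Bigr)+m_2\Bigl(\frac{2}{(1+a^2)^{3/2}}-\frac14\Bigr)+2m_5\Bigl(\frac{1}{(a^2+z^2)^{3/2}}-\frac{1}{(1+z^2)^{3/2}}\Bigr).
\]
For every $z>0$, the coefficients behave as follows.
\begin{itemize}
\item If $a\ge\sqrt3$, every coefficient is $\le 0$, and the first and last are $<0$.
\item If $a\le 1/\sqrt3$, every coefficient is $\ge 0$, and the last two are $>0$.
\end{itemize}
The same holds for $\Lambda_z-\Lambda_1$ with $(a,m_1)$ and $(z,m_5)$ interchanged. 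Hence positive masses force $1/\sqrt3<a,z<\sqrt3$.

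So the region where your kernel is positive stays away from the quadrant edges entirely. Its boundary consists of implicitly defined interior curves on which a mass vanishes. On $a=1$, for instance, only $1/\sqrt3<z<z^*\approx 1.26$ survives. The correspondence ``quadrant edges $\to$ simplex edges'' is therefore not available, and no degree computation is supplied.

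Your fallback is exactly the paper's two-step intermediate value argument. It has the hole you point out: $z(a)$, like the paper's $v_0(u)$, is chosen pointwise, so continuity of $a\mapsto F_1(a,z(a))$ is unproved. The paper does not address this either.

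The same sign computation closes the gap without any branch, monotonicity or uniqueness. On the square $[1/\sqrt3,\sqrt3]^2$ in the $(a,z)$-plane:
\begin{itemize}
\item $\Lambda_a-\Lambda_1>0$ on the side $a=1/\sqrt3$ and $<0$ on the side $a=\sqrt3$, whatever $z$ is;
\item $\Lambda_z-\Lambda_1>0$ on $z=1/\sqrt3$ and $<0$ on $z=\sqrt3$, whatever $a$ is.
\end{itemize}
The Poincar\'e--Miranda theorem (the two-dimensional intermediate value theorem) then gives a common zero in the open square. Put $\lambda=\Lambda_1$ and finish with your exclusion step. Equivalently, take the Brouwer degree of $(\Lambda_a-\Lambda_1,\Lambda_z-\Lambda_1)$ for \emph{fixed} masses, where the boundary signs are explicit, rather than of $\Phi$, whose positive region has an implicit boundary. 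Since the paper's $F=-4(\Lambda_a-\Lambda_1)$ and $G=4(\Lambda_a-\Lambda_z)$, this also repairs the paper's proof.
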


\subsection{Configurations with \texorpdfstring{$q_5$}{R} and  \texorpdfstring{$q_6$}{R}  on the same sides of the rhombus plane}

We consider configurations where at least one mass \( m_k \) ( with position \( q_k \), for \( k\in\{5,6\} \)) lies outside the plane containing the rhombus (see Figures \ref{fig4}--\ref{fig6}). Without loss of generality, we assume that $z_{5}>0$ and $z_{6}\geq0$.

\begin{theorem}\label{thm2.4}
Let the configuration \( q = [q_1, q_2, \cdots, q_6] \) be defined as in equation (\ref{2.1}) with $a>0$, $x_5,\,x_6,\,y_5,\,y_6\in\mathbb{R}$, $z_{5}>0$ and $z_{6}\geq0$. If the rhombus is square (i.e. $a=1$) and the six masses form a central configuration, then \(  x_5 = x_6 = y_5=y_6=0\) and \(  z_{6}>0\).
\end{theorem}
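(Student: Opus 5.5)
We will show that the base equations force $q_5,q_6$ onto the $z$-axis, and then that $z_6=0$ is impossible. We use the $z$-components of the central configuration equations (\ref{1.2}) for the base bodies $k=1,\dots,4$, whose $z$-coordinate is $0$. With $q_0=(x_0,y_0,z_0)$, the $z$-equation for $k\in\{1,2,3,4\}$ reads
\begin{equation*}
\frac{m_5 z_5}{|q_5-q_k|^3}+\frac{m_6 z_6}{|q_6-q_k|^3}=\lambda z_0 ,\qquad k=1,2,3,4,
\end{equation*}
since the other base bodies contribute nothing in the $z$-direction. Put $F(q)=m_5 z_5|q_5-q|^{-3}+m_6 z_6|q_6-q|^{-3}$. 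The left side is $F(q_k)$, and because $z_5>0$ and $z_6\ge 0$, $F$ is a positive combination of decreasing functions of distance. The right side does not depend on $k$, so $F(q_1)=F(q_2)=F(q_3)=F(q_4)$. The case $z_6=0$ will be handled separately; first we extract the horizontal constraints.

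Next we use the $x$- and $y$-equations. Write $d_{5k}=|q_5-q_k|$, so that $d_{5k}^2=(x_5-\xi_k)^2+(y_5-\eta_k)^2+z_5^2$ with $q_k=(\xi_k,\eta_k,0)$, and $a=1$. We exploit the symmetry of the square in the differences $F(q_1)-F(q_3)=0$ and $F(q_2)-F(q_4)=0$. Since $d_{51}^2-d_{53}^2=4y_5$ and $d_{52}^2-d_{54}^2=-4x_5$ (similarly for $q_6$), we get
\begin{equation*}
m_5 z_5\bigl(d_{51}^{-3}-d_{53}^{-3}\bigr)+m_6 z_6\bigl(d_{61}^{-3}-d_{63}^{-3}\bigr)=0 .
\end{equation*}
Each bracket has the sign of $-y_5$ (resp. $-y_6$), and analogously for $x$. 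This gives sign relations: $y_5$ and $y_6$ are zero or of opposite signs, and likewise $x_5,x_6$.

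To finish the horizontal part, we bring in the equations for $k=5,6$ and the horizontal components of the base equations, and argue as in Theorem~\ref{thm2.1}. The bodies $m_5,m_6$ both lie on the same side of the base plane. A natural route is to take the $x$-equation for body $5$ minus that for body $6$, or to use a moment/virial-type identity $\sum_k m_k\, q_k\times(\text{force}_k)=0$ (torque balance). Every CC satisfies $\sum_k m_k\, q_k\times \ddot q_k=0$, and in the $(x,z)$ and $(y,z)$ planes the base contributes only through $z$-forces. This should combine with the opposite-sign relations to force $x_5=x_6=0$ and $y_5=y_6=0$: for opposite signs, the horizontal force on the upper pair cannot balance against $\lambda(q_5-q_0)$ and $\lambda(q_6-q_0)$ simultaneously. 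I expect this step to be the main obstacle. I would first try the $x$-component equations for bodies $5$ and $6$ in the form $\lambda m_5 (x_5-x_0)=\sum_j(\cdot)$. Their mutual terms cancel when added, and the base contributions are monotone in $x_5$ for the symmetric square.

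Finally, we exclude $z_6=0$. Then $q_6$ is in the base plane and, by the above, at the origin (the center of the square). The $z$-equations for bodies $1,\dots,4$ give $m_5 z_5/d^3=\lambda z_0$ with $z_0=m_5 z_5/M>0$. Hence $\lambda=M/d^3$, where $d^2=1+z_5^2$. The $z$-equation for body $6$ at the origin is $m_5 z_5/z_5^3=\lambda z_0=m_5z_5/d^3$, which forces $z_5=d=\sqrt{1+z_5^2}$, a contradiction. Therefore $z_6>0$.
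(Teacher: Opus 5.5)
You have a genuine gap: the horizontal step is the heart of the theorem, and it is not proved. Your first two steps are correct and match the sign analysis in Step 1 of Proposition~\ref{prop6.2}, but excluding $q_5,q_6$ off the axis with opposite horizontal signs is left at ``should combine''. The torque route you suggest cannot work. The identity $\sum_k m_kq_k\times\ddot q_k=0$ holds for \emph{every} configuration, because pairwise central forces have zero total torque and $\sum_k m_kq_k\times(q_k-q_0)=-Mq_0\times q_0=0$, so it gives no information. Any valid argument must also use $a=1$ essentially: for $\sqrt3/3<a<1$ there are central configurations with $x_5=-x_6\ne0$, $y_5=y_6=0$, $z_5=z_6>0$ (Theorem~\ref{thm2.6}, Example~\ref{exa2.6}), and your sign relations hold for every $a$. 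Two ideas are missing.
\begin{enumerate}
\item First rule out $x_5y_5\neq0$ using $f_{y_2}-f_{y_4}=m_5y_5(d_{52}^{-3}-d_{54}^{-3})+m_6y_6(d_{62}^{-3}-d_{64}^{-3})=0$. By your sign relations both terms have the strict sign of $x_5y_5$, so the sum cannot vanish.
\item In the remaining case, say $x_5=x_6=0$ and $y_5>0>y_6$, compare \emph{adjacent} vertices: $F(q_2)=F(q_3)$ and $F(q_2)=F(q_1)$. For $a=1$ you have $d_{51}^2=\xi+2y_5$, $d_{52}^2=\xi$, $d_{53}^2=\xi-2y_5$ with $\xi=1+y_5^2+z_5^2$. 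These two equations form a homogeneous $2\times2$ system in $(m_5z_5,m_6z_6)$ with determinant $CD-\widetilde C\widetilde D$. Here $C=d_{53}^{-3}-d_{52}^{-3}>\widetilde C=d_{52}^{-3}-d_{51}^{-3}>0$ by strict convexity of $t\mapsto t^{-3/2}$. Likewise $D=d_{61}^{-3}-d_{62}^{-3}>\widetilde D=d_{62}^{-3}-d_{63}^{-3}>0$ since $y_6<0$. The determinant is therefore positive, which forces $m_5z_5=m_6z_6=0$, a contradiction.
\end{enumerate}

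Your exclusion of $z_6=0$ also has a gap. When $z_6=0$ your sign relations give only $x_5=y_5=0$ and say nothing about $q_6$, so ``by the above, at the origin'' is unjustified. Your own computation shows what actually happens: the $z$-equation for body $6$ reads $m_5z_5/d_{56}^3=\lambda z_0=m_5z_5/d^3$, so $d_{56}=d=\sqrt{1+z_5^2}$, which means $x_6^2+y_6^2=1$. Thus $q_6$ lies on the circumcircle of the square, not at its center, and this case still has to be excluded. One way is $f_{y_2}-f_{y_4}=m_6y_6(d_{62}^{-3}-d_{64}^{-3})=0$, which gives $x_6y_6=0$. Then $q_6$ coincides with a vertex of the square, which is a collision.
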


\begin{remark}\label{rem2.3}
As shown in Example \ref{exa2.2},  when the rhombus is a square, and \( q_5 \) and \( q_6 \) are positioned on opposite sides of this plane containing the square, the line segment $q_{5}q_{6}$ does not pass through the square's center.
In contrast, Theorem \ref{thm2.4} states that if \( q_5 \) and \( q_6 \) are positioned on the same side of the plane, then $q_{5}q_{6}$ must pass through the center. Here is an example.

\begin{example}\label{exa2.5} In equation (\ref{1.2}), set $n=6$, $x_{5}=x_{6}=y_{5}=y_{6}=0$, $z_{5}=1$, $z_{6}=2$, $a=1$, $m_{1}=m_{2}=m_{3}=m_{4}=1$, $m_{5}=1.854247192$, $m_{6}=0.5020077162$. Then, there exists $\lambda=1.657583098$ such that the six masses form a pyramidal central configuration.
\end{example}

\end{remark}

Moreover, we have the following theorems.

\begin{theorem}\label{thm2.5}
Let the configuration \( q = [q_1, q_2, \cdots, q_6] \) be defined as in equation (\ref{2.1}) with $a>0$, $x_5,\,x_6,\,y_5,\,y_6\in\mathbb{R}$, $z_{5}>0$ and $z_{6}\geq0$. When the six masses form a central configuration where the rhombus is a non-square, we have the following results.
\begin{enumerate}
    \item The points \( q_5 \) and \( q_6 \) both lie either in the $XOZ$-plane or in the $YOZ$-plane.
    
   \item If $q_5$ and $q_6$ belong to the $XOZ$-plane, then when $q_{5}$ does not lie on the positive $Z$-axis, then the rhombus must be a non-square, and $\sqrt{3}/3<a<1$; when $q_{5}$ lies on the positive $Z$-axis, then the rhombus must be a square ($a=1$).
   In addition, when the rhombus is a non-square, the coordinates satisfy $x_5=-x_6\neq0$, $y_5=y_6=0$ and $z_{5}=z_{6}>0$, and the masses satisfy $m_{1}=m_{3}$, $m_{2}=m_{4}$ and $m_{5}=m_{6}$. 
   
   \item If $q_5$ and $q_6$ belong to the $YOZ$-plane, then when $q_{5}$ does not lie on the positive $Z$-axis, then the rhombus must be a non-square, and $1<a<\sqrt{3}$; when the rhombus is a non-square, then the rhombus must be a square ($a=1$).
   In addition, when the rhombus is a non-square, the coordinates satisfy $x_5=x_6=0$, $y_5=-y_6\neq0$ and $z_{5}=z_{6}>0$, and the masses satisfy $m_{1}=m_{3}$, $m_{2}=m_{4}$ and $m_{5}=m_{6}$.
\end{enumerate}
\end{theorem}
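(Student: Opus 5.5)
We plan to write the central configuration equations (\ref{1.2}) componentwise for the configuration (\ref{2.1}), with $z_5>0$, $z_6\ge 0$ and $a\ne 1$. The idea is to extract symmetry constraints from the equations of the four base bodies, and then use the equations of $m_5,m_6$ to pin down their positions.

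\textbf{Step 1 (base bodies, $z$-components).} Since $q_1,\dots,q_4$ have zero third coordinate, the $z$-component of (\ref{1.2}) for $k=1,\dots,4$ reads
$$\frac{m_5z_5}{r_{k5}^3}+\frac{m_6z_6}{r_{k6}^3}=\lambda z_0,\qquad z_0=\frac{m_5z_5+m_6z_6}{M}.$$
Hence $\Phi(q):=m_5z_5r_{q5}^{-3}+m_6z_6r_{q6}^{-3}$ takes the same value at all four vertices. Because $z_5>0$ and $z_6\ge0$, the right-hand side is nonzero. Combined with $\lambda>0$, this will be used repeatedly.

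\textbf{Step 2 (in-plane components, projection argument).} Take the $x$- and $y$-components for $k=1,\dots,4$ and add the four equations with weights. The mutual forces among the rhombus bodies cancel in the total, so we obtain relations linking $x_0,y_0$ to $x_5,x_6,y_5,y_6$. We will exploit the $x$-equations at $q_1,q_3$ (which have $x=0$) and the $y$-equations at $q_2,q_4$.

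For instance, the $x$-component at $q_1$ is $m_5x_5r_{15}^{-3}+m_6x_6r_{16}^{-3}=\lambda(-x_0)$, up to the base terms $\frac{m_2}{|q_2-q_1|^3}-\frac{m_4}{|q_4-q_1|^3}$, which equal $(m_2-m_4)/(1+a^2)^{3/2}$. Comparing the equations at $q_1$ and $q_3$, and at $q_2$ and $q_4$, together with the equality of $\Phi$ at the four vertices, we aim to show the following dichotomy: either $y_5=y_6=0$ (reflection in the $XOZ$-plane, with $m_1=m_3$), or $x_5=x_6=0$ (reflection in the $YOZ$-plane, with $m_2=m_4$). This is item 1, and the argument should parallel the one behind Theorem \ref{thm2.1}.

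\textbf{Step 3 (position of $q_5,q_6$ within the plane).} Assume $q_5,q_6$ lie in the $XOZ$-plane; the $YOZ$ case follows by exchanging the roles of the axes, replacing $a$ by $1/a$ after rescaling. The key ingredient is a Dziobek/Moulton-type equality in which the force on $m_5$ from $m_6$ appears antisymmetrically. Subtracting the equations for $m_5$ and $m_6$, and using that both lie on the same side, we want to show that $z_5=z_6$ and $x_5=-x_6$ whenever $q_5$ is off the axis.

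Our first attempt will be to consider the function $f(x)=\Phi$ restricted to the vertices $q_2,q_4$. Equality $\Phi(q_2)=\Phi(q_4)$ together with $\Phi(q_1)=\Phi(q_3)$ then forces a reflection relation: either $x_5=x_6=0$, or $(x_6,z_6)=(-x_5,z_5)$ with $m_5=m_6$. The first alternative reduces, via Theorem \ref{thm2.4}-type reasoning applied to the collinear $z$-axis case, to the requirement that the base be a square; this is where the square case is excluded. In the second alternative, the $x$-equations at $q_2,q_4$ give $m_2=m_4$, and we also get $m_1=m_3$.

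\textbf{Step 4 (range of $a$).} With the full symmetry in hand, the problem reduces to a few scalar equations in $(a,x_5,z_5)$ with positive masses. The remaining equations are the $x$-equation at $q_2$, the $y$-equation at $q_1$, and the $x$- and $z$-equations at $q_5$. We solve them linearly for the mass ratios $m_2/m_1$ and $m_5/m_1$. Positivity of these ratios should yield the sign conditions that give $\sqrt3/3<a<1$; this is presumably an inequality of the form $(1+a^2)^{3/2}$ compared against $8$ or $8a^3$, coming from the base-to-base forces $1/(4a^2)$ versus $a/(1+a^2)^{3/2}$. The symmetric bound $1<a<\sqrt3$ in the $YOZ$ case follows by the substitution $a\mapsto1/a$.

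\textbf{Main obstacle.} We expect the hardest step to be Step 3: proving that the mixed configurations are impossible, i.e.\ that one cannot have $q_5$ off the axis with $q_6$ asymmetric, or one body on the axis and the other off it. This requires a monotonicity argument on $\Phi$ along the plane, and we expect to need a careful sign analysis of $r^{-3}$ differences.
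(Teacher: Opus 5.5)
\textbf{Where your proposal agrees with the paper.} Your Steps 1--2 follow the paper's route to item 1 (Propositions 6.1--6.2). With $z_5,z_6>0$, the relations $\Phi(q_2)=\Phi(q_4)$ and $\Phi(q_1)=\Phi(q_3)$ force $x_5,x_6$ (resp.\ $y_5,y_6$) to have opposite signs or both vanish. Then $f_{y_2}-f_{y_4}=0$ rules out $x_5y_5\neq0$. This sign argument needs $z_6>0$, so you must dispose of $z_6=0$ first. Here that is immediate: equal $\Phi$ at the four vertices gives $r_{15}=r_{25}=r_{35}=r_{45}$, hence $a=1$. Your on-axis alternative (forcing a square) and your reduction of item 3 to item 2 via $a\mapsto 1/a$ also match the paper.

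\textbf{The gap is Step 3.} In the $XOZ$ case $y_5=y_6=0$, so $\Phi(q_1)=\Phi(q_3)$ holds identically and carries no information. What remains, $\Phi(q_2)=\Phi(q_4)$, is the single scalar relation $m_5z_5(r_{25}^{-3}-r_{45}^{-3})=m_6z_6(r_{46}^{-3}-r_{26}^{-3})$. Both coefficients are positive for every $x_5>0>x_6$ and $z_5,z_6>0$, so the relation is met by a suitable choice of $m_6/m_5$ and yields nothing beyond $x_5x_6<0$. It cannot force $(x_6,z_6)=(-x_5,z_5)$ and $m_5=m_6$, and the unspecified Moulton-type subtraction does not fill this in. Yet this symmetry is the real content of items 2 and 3.

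The mass equalities are not automatic either:
\begin{itemize}
\item The $y$-equation at $q_2$ only gives $a(m_3-m_1)\bigl((1+a^2)^{-3/2}-\lambda/M\bigr)=0$. If $m_1\neq m_3$, the sum of the $y$-equations at $q_1,q_3$ adds $\lambda/M=(2a)^{-3}$. This is the exceptional value $a=\sqrt3/3$, which needs a separate exclusion.
\item In your symmetric alternative, the $x$-equations at $q_2,q_4$ only give $(m_2-m_4)(1/4-2\lambda/M)=0$.
\end{itemize}

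\textbf{The missing mechanism.} Once $m_1=m_3$ and $m_2=m_4$ are available, the paper extracts the symmetry as follows.
\begin{itemize}
\item Rewrite the equations as $\sum_{j\neq k}m_j(q_j-q_k)(r_{kj}^{-3}-\lambda/M)=0$. The $x$- and $z$-components at $q_1$ form a linear system in $m_5(r_{15}^{-3}-\lambda/M)$ and $m_6(r_{16}^{-3}-\lambda/M)$ with determinant $x_5z_6-x_6z_5>0$. Hence $r_{15}=r_{16}$: $q_5,q_6$ lie on a common circle $x^2+z^2=R^2$, and $\lambda/M=(a^2+R^2)^{-3/2}$.
\item Set $c=1+R^2$, $\Theta(t)=(c-2t)^{-3/2}+(c+2t)^{-3/2}$ and $J(t)=(c-2t)^{-3/2}-(c+2t)^{-3/2}$. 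The $z$-equations at $q_2,q_4$, together with the sum of the $x$-equations there, reduce to $\Theta(x_5)-2\lambda/M=\eta J(x_5)$ and $\Theta(-x_6)-2\lambda/M=-\eta J(-x_6)$, where $\eta=(z_6-z_5)/(x_5z_6-x_6z_5)$.
\item Strict convexity gives $\Theta>2c^{-3/2}$, hence $a<1$ and then $|\eta|<1$. The $y$-equation at $q_1$ and the $x$-equation at $q_2$ then give $R^2>1$, hence $R^2<3a^2$ and $a>\sqrt3/3$. The $x$-equation at $q_5$ gives $r_{56}^2<a^2+R^2$.
\item Only with these bounds is $x_5\neq -x_6$ excluded. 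Assuming $x_5>-x_6$, the difference of the two identities above, which must vanish, is shown to be strictly positive. This is done first with exponent $1$ in place of $3/2$, where the expression is rational and its sign follows from $R^2>1$ and $r_{56}^2<a^2+R^2$. It is then transferred to exponent $3/2$ via monotonicity of the ratio of the corresponding divided differences.
\end{itemize}

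\textbf{Consequence for your Step 4.} Your guess correctly locates the endpoint $a=\sqrt3/3$, where $(1+a^2)^{3/2}=8a^3$. However, $a<1$ comes from the base $z$-equations via convexity, not from positivity of mass ratios. The lower bound requires first proving $R^2>1$. In the paper both bounds are obtained before the symmetry is known, and $a<1$ and $R^2>1$ are inputs to the symmetry argument rather than consequences of it.
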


For central configurations with a non-square base, in addition to the necessary conditions for existence given in Theorem \ref{thm2.5}, we have the following sufficient conditions for existence.
central configuration:
\begin{theorem}\label{thm2.6}
 For every $a\in(\sqrt{3}/3,\,1) \cup (1, \,\sqrt{3})$, there exists a central configuration \( q = [q_1, q_2, \cdots, q_6] \) satisfying equation (\ref{1.2}). 
\end{theorem}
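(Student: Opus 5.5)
We construct the configurations explicitly from the symmetric ansatz singled out in Theorem \ref{thm2.5}, and we treat the range $a\in(\sqrt{3}/3,1)$ with $q_5,q_6$ in the $XOZ$-plane. The range $a\in(1,\sqrt{3})$ with $q_5,q_6$ in the $YOZ$-plane then follows by the reflection $(x,y,z)\mapsto(y,x,z)$ combined with the rescaling $x\mapsto x/a$, $y\mapsto y/a$, $z\mapsto z/a$, which sends a rhombus of parameter $a$ to one of parameter $1/a$. The ansatz is
\[
q_5=(b,0,h),\qquad q_6=(-b,0,h),\qquad b\neq0,\ h>0,
\]
with masses $m_1=m_3=\mu$, $m_2=m_4=1$ and $m_5=m_6=\nu$. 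Under these choices the configuration is invariant under the reflections $x\mapsto -x$ and $y\mapsto -y$, with the masses permuted compatibly. Hence the center of mass is $q_0=(0,0,z_0)$ with $z_0=2\nu h/(2+2\mu+2\nu)$, and by symmetry the equations (\ref{1.2}) reduce to a small set of scalar equations:
\begin{itemize}[label={}]
\item the $y$- and $z$-components for $q_1$ (the $x$-component vanishes);
\item the $x$- and $z$-components for $q_2$;
\item the $x$- and $z$-components for $q_5$ (the $y$-component vanishes).
\end{itemize}
The $z$-components are not independent. Summing $m_k\cdot(\text{force})_z$ over all $k$ gives zero automatically, and the $\lambda$-term sums to zero because $q_0$ is the barycenter. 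This removes one equation, leaving five equations in the unknowns $\lambda,\mu,\nu,b,h$ for each fixed $a$.

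Next we eliminate $\lambda$ and exploit linearity in the masses. Each equation is linear in $(\mu,1,\nu,\lambda)$, apart from the factor $z_0$, which is rational in $\mu,\nu$. Multiplying through by the total mass clears this, and every equation becomes linear in $\lambda$ and, separately, in each of $\mu$ and $\nu$. We solve the $q_1$ $y$-equation and the $q_2$ $x$-equation (the in-plane radial equations) for $\lambda$ and for $\mu$ in terms of $\nu$. Substituting into the $q_5$ $x$-equation and one $z$-equation, we express $\nu$ and then obtain a single remaining relation $F(a,b,h)=0$. Alternatively, we may treat $(\mu,\nu,\lambda)$ as the unknowns of a linear system with a geometric coefficient matrix; then the solvability condition, a determinant $F(a,b,h)=0$, plus positivity of the solution $(\mu,\nu)$, are exactly what must be checked.

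The last step is to show that for each $a\in(\sqrt3/3,1)$ the curve $F(a,\cdot,\cdot)=0$ meets the region where $\mu,\nu>0$ and $\lambda>0$. We will fix $h$ conveniently, or take a one-parameter path such as $b\to0^+$ and $b$ large, and use the intermediate value theorem on $F$ in $b$. This requires the sign of $F$ at two endpoints, computed from the explicit mutual distances
\[
|q_1q_5|^2=b^2+a^2+h^2,\qquad |q_2q_5|^2=(1-b)^2+h^2,\qquad |q_5q_6|=2|b|.
\]
The thresholds $a=\sqrt3/3$ and $a=1$ should appear naturally. At $a=\sqrt3/3$ the triangle $q_2q_1q_4$ has the angle relations tied to $60^\circ$, and the limiting sign of $F$ or of $\mu$ should change exactly there; this matches the necessary condition in Theorem \ref{thm2.5}.

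The main obstacle is this sign and positivity analysis uniformly in $a$. We first try limits $h\to0^+$ and $h\to\infty$ (or $b\to 0$ and $b\to1$), where $F$ and $\mu,\nu$ simplify to expressions in $a$ alone whose signs can be read off on $(\sqrt3/3,1)$. If that fails, we fall back on a continuity argument starting from a numerically verified configuration, in the spirit of Examples \ref{exa2.1}--\ref{exa2.5}, together with an implicit-function argument in $a$.
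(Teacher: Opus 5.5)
Your set-up matches the paper's. The symmetric ansatz $m_1=m_3$, $m_2=m_4$, $m_5=m_6$ is the right one. So is the transfer between $a<1$ and $a>1$ via $(x,y,z)\mapsto(y,x,z)$ plus rescaling by $1/a$; the paper goes the other way, proving $a\in(1,\sqrt3)$ first. Your reduction to five independent scalar equations in $(\lambda,\mu,\nu,b,h)$ is also correct.

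\textbf{The gap.} The existence step, which is the whole content of the theorem, is only a list of things to try, and as written it would not work.
\begin{enumerate}
\item Your elimination uses only four of the five equations. After removing $\lambda,\mu,\nu$ there remain \emph{two} relations in $(b,h)$, not a single $F(a,b,h)=0$. For fixed $a$ the solution set is discrete, so ``fix $h$ and apply the intermediate value theorem in $b$'' simply ignores one equation.
\item The endpoints you propose are the wrong ones. Any solution of the ansatz has $\mu>0$ only if $1<|q_5|<\sqrt3\,a$ (shown below). So $h\to\infty$ leaves the admissible set, and neither $b\to0$ nor $h\to0$ occurs along the natural one-parameter family.
\item The fallback does not give existence for \emph{every} $a\in(\sqrt3/3,1)$. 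Continuing a numerical configuration in $a$ by the implicit function theorem would need nondegeneracy along the whole interval and a priori bounds ruling out escape of the branch (masses tending to $0$ or $\infty$, $h\to0$, collisions). You supply none of these.
\end{enumerate}

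\textbf{The missing idea.} Work with $\kappa=\lambda/M$ and write each equation as $\sum_j m_j(q_j-q_k)(D_{kj}^{-3}-\kappa)=0$. This is linear homogeneous in the masses. Multiplying by $M$, as you propose, does not achieve this: the gravitational terms become quadratic in the masses. In this form the system is triangular. Put $R^2=b^2+h^2$ and $L=(1+a^2)^{-3/2}$.
\begin{enumerate}
\item The $z$-component at $q_1$ gives $\kappa=(a^2+R^2)^{-3/2}$.
\item The $y$-component at $q_1$ then gives $\mu=(L-\kappa)/(\kappa-(2a)^{-3})$. This depends on $R$ alone and is positive exactly for $1<R<\sqrt3\,a$. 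That window is nonempty precisely because $a>\sqrt3/3$, which is where the threshold really enters.
\item The $z$-component at $q_2$ is purely geometric: $(1+R^2-2b)^{-3/2}+(1+R^2+2b)^{-3/2}=2\kappa$. The left side is strictly increasing in $b$, and because $a<1$ it lies below $2\kappa$ at $b=0$. Hence there is a unique root $b=s(R)\in(0,R)$.
\item The $x$-component at $q_5$ gives $2b\,\nu\,\big((2b)^{-3}-\kappa\big)=J$, where $J=(1+R^2-2b)^{-3/2}-(1+R^2+2b)^{-3/2}>0$. So $\nu>0$ needs a separate inequality, $2s(R)<\sqrt{a^2+R^2}$, i.e.\ $|q_5q_6|<|q_1q_5|$.
\item The remaining $x$-component at $q_2$ reads $\mu(L-\kappa)+\tfrac18-\kappa=\tfrac12\,b\,\nu\,J$. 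This is one continuous function of $R$ on $(1,\sqrt3\,a)$.
\end{enumerate}

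Near $R=1$, where $\mu\to0$, the left side minus the right side is at most $\tfrac18-L+o(1)<0$. As $R\uparrow\sqrt3\,a$, where $\mu\to\infty$, it tends to $+\infty$; the $\nu$-term stays bounded by the same inequality at the endpoint. The intermediate value theorem then finishes.

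This is exactly the paper's Parts I--III, carried out in the rotated frame with $a\in(1,\sqrt3)$ and parameter $r\in(a,\sqrt3)$. The inequality guaranteeing $\nu>0$ is its Part II.
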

\begin{example}\label{exa2.6} In equation (\ref{1.2}), set $n=6$, $x_{5}=-x_{6}=0.37571841$,  $y_{5}=y_{6}=0$, $z_{5}=z_{6}=1.160022542$, $a=0.85$, $m_{1}=m_{3}=1$, $m_{2}=m_{4}=0.7326319459$, $m_{5}=m_{6}=0.1262842998$.
Then there exists $\lambda=1.132147153$ such that the six masses form a pyramidal central configuration.
\end{example}

\subsection{Existence of a perverse solution in three-dimensional space}

We construct a perverse solution generated from a central configuration in Theorem \ref{thm2.4}. More precisely, suppose that the position vector of $m_1,\,m_2,\,\ldots,\,m_6$ are as follows:
\begin{eqnarray}\label{2.2}
\left\{
  \begin{array}{ll}
q_1= (0, -1,0),\,\,q_2= (1, 0,0),\,\, q_3= (0, 1,0), \\ [0.3cm]
q_4= (-1, 0, 0),\,\,q_5= (0,0,z_{5}),\,\,q_6=(0,0,z_{6}),
  \end{array}
\right.
\end{eqnarray}
where $z_{6}>z_{5}>0$. Suppose that $m_1=m_2=m_3=m_4=m_0>0$. Based on the results in \cite{Ouyang2004}, in order to generate a perverse solution, the configuration $q$ must be a central configuration for at least two sets of masses and it has the following properties: \begin{itemize}
    \item The center of mass is at $q_5$.
    \item The distance $\|q_i-q_5\|$ between $q_i$ and $q_5$ is the same for all $i=1,2,3,4,6$.
    \item The parameter $\lambda$ should be same for the different set of masses.
\end{itemize}

\begin{theorem}\label{thm2.7}
There exists a $b_0 \in \mathbb{R}^+$, which is the unique positive solution for the equation
\begin{equation}\label{2.3}
    -4+\left(\sqrt{b^{2}+1}\, b +b^{2}+1\right) \left(\sqrt{2}+\frac{1}{2}\right) \sqrt{2 b^{2}+2+2 \sqrt{b^{2}+1}\, b}=0.
\end{equation}
Let $z_5=b_0$ and $z_6=\sqrt{{b_0}^{2}+1}+b_0$. Then the configuration \( q = [q_1, q_2, \cdots, q_6] \)  defined as in equation (\ref{2.2}) is a central configuration for the mass vector $(m_1, m_2, m_3, m_4, m_5, m_6)$ with $m_1=m_2=m_3=m_4=m_0$, $m_6=\frac{b_0m_0}{\sqrt{{b_0}^2+1}}$, where $m_0$ and $m_5$ are arbitrary positive values. The corresponding parameter $\lambda$ is given by
\begin{equation}\label{2.4}
    \lambda= \frac{m_0\sqrt{2}}{2}+\frac{m_0}{4}+\frac{\mathit{m_5}}{\left({b_0}^{2}+1\right)^{\frac{3}{2}}}+\frac{4 bm_0}{\sqrt{{b_0}^{2}+1}\, \left(\left(\sqrt{{b_0}^{2}+1}+b_0 \right)^{2}+1\right)^{\frac{3}{2}}}.
\end{equation}
The homothetic collision/expansion solution generated from this central configuration for a fixed $\lambda$ is a perverse solution, but it is not a really perverse solution.
\end{theorem}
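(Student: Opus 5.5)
The plan is to verify the central configuration equations (\ref{1.2}) directly for the configuration (\ref{2.2}), imposing from the start the three requirements listed before the theorem (barycenter at $q_5$, equal distances from $q_5$, common $\lambda$). Write $b=z_5$, $c=z_6$ and $r=\sqrt{b^2+1}=|q_i-q_5|$ for $i=1,\dots,4$. The choice $c=r+b$ is exactly what makes $|q_6-q_5|=c-b=r$ as well.

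\textbf{Step 1: the barycenter and the equation for $q_5$.} The $x$- and $y$-coordinates of the barycenter vanish by the symmetry of the square. Requiring $q_0=q_5$ leaves only the $z$-coordinate condition $m_5b+m_6c=(4m_0+m_5+m_6)b$, i.e. $m_6(c-b)=4m_0b$. I will check that this coincides with the equation for $k=5$. Since $q_5-q_0=0$, that equation reduces to the vanishing of the total force on $m_5$. Its horizontal part cancels by symmetry, and its vertical part is $-4m_0b/r^3+m_6/r^2=0$. This gives $m_6=4m_0b/r$, which is the value consistent with the $m_6$-term appearing in (\ref{2.4}); I will recheck the normalization of $m_6$ in the statement against this computation. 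Note that $m_5$ does not enter this equation, so it remains free.

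\textbf{Step 2: the base bodies and $q_6$.} By the $90^\circ$ rotational symmetry it suffices to treat $k=1$. The mutual attractions of the square, together with the horizontal components of the pulls of $m_5$ and $m_6$, yield $\lambda$ in the form (\ref{2.4}). Here I use that $m_5$ contributes $-(m_5/r^3)(q_1-q_5)$, which is proportional to $q_1-q_0$; this is why $m_5$ only shifts $\lambda$ by $m_5/r^3$ and is otherwise arbitrary. The same observation applies to body $6$. The vertical component of the equation for $k=1$ and the equation for $k=6$ then give two conditions on $b$ once $\lambda$ and $m_6$ are substituted. I expect one of them to follow from the other, via the identity $\sum_k m_k(\text{force}_k)=0$ and $q_0=q_5$. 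After substituting $c=r+b$ and clearing denominators, the remaining condition should be exactly (\ref{2.3}). \emph{This algebraic reduction is the main obstacle.} I will organize it by writing every distance in terms of $r$ and $b$, in particular $|q_6-q_1|^2=c^2+1=2r^2+2rb$, which explains the last square root in (\ref{2.3}).

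\textbf{Step 3: existence and uniqueness of $b_0$.} Write the left side of (\ref{2.3}) as $-4+F(b)$. Each factor of $F$ is positive and strictly increasing on $b\ge 0$, so $F$ is strictly increasing. At $b=0$ we get $-4+(\sqrt2+\tfrac12)\sqrt2=-2+\tfrac{\sqrt2}{2}<0$, and the expression tends to $+\infty$ as $b\to\infty$. Hence there is a unique positive root $b_0$.

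\textbf{Step 4: perversity.} With $b_0$ fixed we have $\lambda=m_0C+m_5/r^3$ for an explicit constant $C>0$. Any two mass vectors $(m_0,m_5)\neq(m_0',m_5')$ lying on the same line $\lambda=\text{const}$ (with $m_6$ determined by $m_0$) have the same configuration, the same barycenter $q_5$ and the same $\lambda$. They therefore generate the same homothetic solution, which is thus perverse. For the failure of real perversity, the total mass is $M=m_0(4+4b_0/r)+m_5$. Fixing both $\lambda$ and $M$ determines $(m_0,m_5)$ uniquely unless $C\,r^3=4+4b_0/r$. I will rule out this equality by direct evaluation at $b_0$, for instance by bounding $b_0$ numerically from (\ref{2.3}). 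It follows that no second mass vector with the same total mass exists, so the solution is not really perverse.
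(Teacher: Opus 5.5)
Your proposal is correct and follows essentially the paper's route. Placing the barycenter at $q_5$ forces $m_6=4m_0b_0/\sqrt{b_0^2+1}$, and you are right that the factor $4$ is missing from the statement. The horizontal equation at a base vertex then yields \eqref{2.4}, and the remaining vertical equations reduce to \eqref{2.3}: the paper computes $f_{z_1}$ and $f_{z_6}$ separately, whereas you link them through $\sum_k m_k f_{z_k}=0$. Monotonicity gives uniqueness of $b_0$, and perversity but not real perversity is read off from the linear system in $(m_0,m_5)$.

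Two small refinements. First, \eqref{2.3} gives $C=(\tfrac{\sqrt2}{2}+\tfrac14)(1+b_0/r)$, so $Cr^3=4+4b_0/r$ would force $r^2=1+z_6^2=2r^2+2rb_0$, which is impossible; this removes the need for numerics. Second, like the paper, you only compare mass vectors inside the family $m_1=\dots=m_4$, $m_6=4m_0b_0/r$. Restricting to this family is justified because the barycenter of a non-affine homothetic motion cannot change, and Proposition~\ref{prop6.3} forces equal base masses for any admissible mass vector.
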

\begin{remark}
    Numerically the equation (\ref{2.3}) has a unique positive solution $b_0\approx 0.2354261907$. Then $z_5=0.2354261907$, $z_6= 1.262765225,$ $m_6= 0.9166445852m_0$ and $\lambda = 1.176438468m_0 + 0.9222711668m_5$. Then this $q$ is a central configuration for the mass vector $[m_0, m_0, m_0, m_0, m_5,$ $0.9166445852m_0],$ which has two free variables $m_0$ and $m_5.$ For this configuration, center of mass is at $q_5$, which is independent of the choice of $m_0$ and $m_5$. The distances $D_{15}$, $D_{25},$  $D_{35},$  $D_{45}$ and $D_{56}$  are all equal to $\sqrt{{b_0}^2+1}= 1.027339034$. For example, for fixed $\lambda =5.119690436$, $q$ is a central configuration for both the mass vector $[2, 2, 2, 2, 3, 1.833289170]$ and the mass vector $[1,1,1,1,4.275588471,0.9166445852].$ In general, for fixed $\lambda =5.119690436$, $m_0$ can be any positive value in $(0,4.351855686)$ and
    $m_5= \frac{\lambda-1.176438468m_0}{ 0.9222711668}>0$. It is clear that the sum of masses for this two set of masses are different. Therefore, the homothetic collision/expansion solution generated from this central configuration for a fixed $\lambda$ is a perverse solution, but it is not a really perverse solution.
\end{remark}

\section{Proof of Theorem \ref{thm2.1}}\label{sec3}\setcounter{equation}{0}

Let $f_{x_k}=0$ be the equation of central configuration (\ref{1.2}) for $x_k$ where $k=1,2, \cdots, 6$. That is,
\begin{eqnarray*}
f_{x_k}= \sum\limits_{{j\neq k\atop{{1\leq j\leq 6}}}}\frac{m_{j} (x_{j}-x_{k})}{D_{kj}^{3}}+\lambda (x_{k}-x_{0}),
\end{eqnarray*}
where $D_{kj} = |q_k-q_j| $ is the distance between $q_k$ and $q_j,$  and $x_0= (\sum_{1\leq j\leq 6}m_j x_j)/(\sum_{1\leq j\leq 6} m_j)$ is the center of mass in $x$-component. Similarly, the equations $f_{y_k}=0 $ and $f_{z_k}=0$ define the central configurations for the $y_k$ and $z_k$ components, respectively. There are a total of 18 equations. Then, Theorem \ref{thm2.1} can be rewritten as the following lemma.

\begin{lemma} \label{lem3.1}
Let the configuration \( q = [q_1, q_2, \cdots, q_6] \) be defined as in equation (\ref{2.1}). If \( q \) is a bi-pyramidal central configuration for the mass vector \( m = (m_1, m_2, \dots, m_6) \), then the following two conditions must be satisfied:
\begin{enumerate}[label=(\Alph*)]
    \item $y_5 y_6>0$ or $y_5=y_6=0$.
    \item $x_5 x_6>0$ or $x_5=x_6=0$.
  \end{enumerate}
  These conditions lead to four cases but only three are possible:
  \begin{enumerate}[label=(\roman*)]
    \item $y_5 y_6>0$ and $x_5 x_6>0$ (impossible).
    \item $x_5 x_6>0$ and $y_5=y_6=0$ (possible). Then $m_3=m_1.$
     \item $y_5 y_6>0$ and $x_5=x_6=0$ (possible). Then $m_4=m_2.$
    \item$x_5=x_6=0$ and $y_5=y_6=0$ (possible). Then $m_3=m_1$ and $m_4=m_2$.
  \end{enumerate}
\end{lemma}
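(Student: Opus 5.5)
We work directly with the 18 scalar equations $f_{x_k}=f_{y_k}=f_{z_k}=0$. The strategy is to combine equations for pairs of base masses so that $\lambda$ and the barycenter drop out. What is left are sign identities involving only the off-plane masses $m_5,m_6$ and the distances $D_{k5},D_{k6}$. Throughout we use $z_5>0>z_6$.

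\textbf{Step 1: condition (A).} We use the $x$- and $z$-equations of the base masses $q_1=(0,-a,0)$ and $q_3=(0,a,0)$. Since $z_1=z_3=0$, the equations $f_{z_1}=0$ and $f_{z_3}=0$ read
\[
\frac{m_5z_5}{D_{15}^3}+\frac{m_6z_6}{D_{16}^3}=\lambda z_0=\frac{m_5z_5}{D_{35}^3}+\frac{m_6z_6}{D_{36}^3}.
\]
Similarly, $x_1=x_3=0$, and the contributions of $q_2,q_4$ to $f_{x_1}$ and $f_{x_3}$ coincide by symmetry ($m_2/D_{12}^3-m_4/D_{14}^3$ in both). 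Hence $f_{x_1}-f_{x_3}=0$ gives
\[
m_5x_5\Big(\frac1{D_{15}^3}-\frac1{D_{35}^3}\Big)+m_6x_6\Big(\frac1{D_{16}^3}-\frac1{D_{36}^3}\Big)=0,
\]
and the $z$-relation gives
\[
m_5z_5\Big(\frac1{D_{15}^3}-\frac1{D_{35}^3}\Big)=-m_6z_6\Big(\frac1{D_{16}^3}-\frac1{D_{36}^3}\Big).
\]
Now note that $D_{15}^2-D_{35}^2=4ay_5$, so $\operatorname{sign}\big(D_{15}^{-3}-D_{35}^{-3}\big)=-\operatorname{sign}(y_5)$, and the same holds with $6$ in place of $5$. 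In the $z$-relation the coefficients $m_5z_5>0$ and $-m_6z_6>0$, so the two brackets have the same sign. Therefore $y_5$ and $y_6$ are either both zero or of the same strict sign, which is (A). The same argument with $q_2,q_4$ and the $y$/$z$ equations gives (B).

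\textbf{Step 2: the mass equalities.} Suppose $y_5=y_6=0$. Then $D_{15}=D_{35}$ and $D_{16}=D_{36}$. We use $f_{y_1}$ and $f_{y_3}$; since $y_5=y_6=0$, the off-plane masses give $y$-contributions $m_5a/D_{15}^3$ and $m_6a/D_{16}^3$ to $f_{y_1}$ and the opposite values to $f_{y_3}$. Adding $f_{y_1}+f_{y_3}$ eliminates these terms and the self-interaction $\pm m_3/(2a)^2$-type terms up to mass weights. It also leaves $-2\lambda y_0$ together with the differences coming from $m_2,m_4$, which cancel. A cleaner route is to use $y_0$ directly: with $y_5=y_6=0$ we have $y_0=a(m_3-m_1)/M$. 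Subtracting instead, the sum relation yields $m_3/(4a^2)\cdot(\ldots)$ terms, and one expects an identity of the form
\[
(m_1-m_3)\Big(\frac1{4a^2}+\frac{\lambda a}{M}\cdot(\text{positive})\Big)=0,
\]
which forces $m_1=m_3$. I expect this bookkeeping to be the delicate step. I would carry it out by writing $f_{y_1}+f_{y_3}$ explicitly and using the identity $M\lambda\,y_0=\sum_k m_k\cdot(\text{force}_y)$ (the total force vanishes). Symmetrically, $x_5=x_6=0$ gives $m_2=m_4$.

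\textbf{Step 3: excluding case (i).} This is the main obstacle. Suppose $x_5x_6>0$ and $y_5y_6>0$. Then we have two nondegenerate relations from Step 1 together with the $f_{x}$, $f_{y}$ equations of $q_5$ and $q_6$. I would first try to exploit the total-force and angular-momentum-type identity $\sum_k m_k q_k\times F_k=0$, which is automatic, or the identities $\sum_k m_k x_k f_{y_k}=\sum_k m_k y_k f_{x_k}$. Combined with the sign information from Step 1, I would aim to derive a sum of terms of one strict sign equal to zero. Concretely, take the two relations from Step 1 (the $x$-relation for $q_1,q_3$ and its analogue for $q_2,q_4$) and substitute the $z$-relation ratio. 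This gives
\[
\frac{x_5}{z_5}=\frac{x_6}{z_6},
\qquad
\frac{y_5}{z_5}=\frac{y_6}{z_6}.
\]
Since $z_5/z_6<0$, the first equality implies $x_5x_6<0$ unless both are $0$, which is a contradiction. In fact this ratio argument already sharpens Step 1 whenever the brackets are nonzero, and it would exclude case (i) outright. So the key check is the nonvanishing of the brackets, which holds precisely when $y_5,y_6\neq0$. In case (i) both brackets are nonzero, the contradiction above applies, and case (i) is impossible.
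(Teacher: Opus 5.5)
Your Steps 1 and 3 are correct. Step 1 is the paper's argument for (A) and (B). Your exclusion of case (i) is a valid variant of the paper's. You pair the $x$- and $z$-relations for $q_1,q_3$ to get $x_5z_6=x_6z_5$ when $y_5y_6\neq 0$, hence $x_5x_6\le 0$. The paper instead reads off $y_5y_6\le 0$ from $f_{y_4}-f_{y_2}$, using that the two brackets share a sign when $x_5x_6>0$.

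The genuine gap is Step 2, which must supply all the mass equalities in (ii)--(iv); this is where most of the paper's work lies. The identity you anticipate has the wrong sign. With $y_5=y_6=0$ and $y_0=a(m_3-m_1)/M$, the exact computation gives
\[
f_{y_1}+f_{y_3}=(m_3-m_1)\Big(\frac{1}{4a^2}-\frac{2\lambda a}{M}\Big)=\frac{(m_1-m_3)(8\lambda a^3-M)}{4a^2M}.
\]
The mutual-attraction term and the $\lambda$-term enter with opposite signs. Since $\lambda>0$, the second factor vanishes when $\lambda/M=(2a)^{-3}$, so this equation does not force $m_1=m_3$. Likewise $f_{x_2}+f_{x_4}=(m_2-m_4)\big(\frac14-\frac{2\lambda}{M}\big)$ does not force $m_2=m_4$. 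The ``total force vanishes'' route cannot help: summing the central configuration equations only gives the tautology $\lambda\sum_k m_k(q_k-q_0)=0$.

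What is missing is an argument excluding the branch $m_1\neq m_3$, $\lambda/M=(2a)^{-3}$, using further equations. The paper proceeds as follows.
\begin{itemize}
\item $f_{y_2}=(m_1-m_3)a\big(\lambda/M-(1+a^2)^{-3/2}\big)=0$ gives $\lambda/M=(1+a^2)^{-3/2}$, hence $a=1/\sqrt3$ and $\lambda/M=3\sqrt3/8$.
\item $f_{y_5}=f_{y_6}=0$ then give $D_{15}^2=D_{16}^2=4/3$, i.e.\ $x_k^2+z_k^2=1$ for $k=5,6$.
\item Case (ii) is finished with the $z$-equations at $q_2,q_4$ and an $x$-equation at a base vertex.
\item Case (iv) needs its own chain, starting from $f_{x_1},f_{x_5},f_{x_6}$ and then $f_{x_4}$, which forces $a^2=3$.
\end{itemize}
Once $a^2=1/3$, $\lambda/M=3\sqrt3/8$ and $x_k^2+z_k^2=1$ are known, there is a quick way to close the branch. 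Take $x_5,x_6\ge 0$ without loss of generality; this covers both (ii) and (iv). Then
\[
f_{x_4}=\frac{m_2(1-3\sqrt3)}{4}+\sum_{k=5,6}m_k(1+x_k)\Big(\frac{1}{D_{4k}^3}-\frac{\lambda}{M}\Big),\qquad D_{4k}^2=2+2x_k>\frac43,
\]
which is strictly negative, a contradiction. The mirror argument handles $m_2=m_4$: use $f_{x_2}+f_{x_4}$, $f_{x_1}$, $f_{x_5}$, $f_{x_6}$ to get $a^2=3$ and $\lambda/M=1/8$, then $f_{y_1}$.

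Without some such argument, your proposal establishes (A), (B) and the impossibility of (i), but none of the mass equalities the lemma asserts.
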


%{\bf Proof.}\\
{\bf Proof of (A):}
Considering  $f_{z_3}-f_{z_1}=0$,  we have
\begin{equation}\label{3.1}
m_5 z_5\left(\frac{1}{D_{53}^3}-\frac{1}{D_{51}^3}\right)+m_6 z_6\left(\frac{1}{D_{63}^3}-\frac{1}{D_{61}^3}\right)=0.
\end{equation}
Since $m_5 z_5>0$ and $m_6 z_6<0$,  both $\left(1/D_{53}^3-1/D_{51}^3\right)$ and $\left(1/D_{63}^3-1/D_{61}^3\right)$ are zero or have the same sign. \\
Using equation (\ref{2.1}), then
$$D_{51}=\left[{x_5}^{2}+(y_5+a)^2+{z_5}^2\right]^{1/2}, \hspace{0.5cm}D_{61}=\left[{x_6}^{2}+(y_6+a)^2+{z_6}^2\right]^{1/2}, $$
$$D_{53}=\left[{x_5}^{2}+(y_5-a)^2+{z_5}^{2}\right]^{1/2}, \hspace{0.5cm}D_{63}=\left[{x_6}^{2}+(y_6-a)^2+{z_6}^{2}\right]^{1/2}.$$
Noticed that $a>0$. It is easy to obtain $y_5=y_6=0$ if both $\left(1/D_{53}^3-1/D_{51}^3\right)=0$ and $\left(1/D_{63}^3-1/D_{61}^3\right)=0$. Furthermore,
$y_5>0$ and $y_6>0$ if both $\left(1/D_{53}^3-1/D_{51}^3\right)>0$ and $\left(1/D_{63}^3-1/D_{61}^3\right)>0$, while
$y_5<0$ and $y_6<0$ if both$\left(1/D_{53}^3-1/D_{51}^3\right)<0$ and $\left(1/D_{63}^3-1/D_{61}^3\right)<0$.
So, $y_5=y_6=0$ or $y_5y_6>0$.\\

{\bf Proof of (B):} Considering  $f_{z_4}-f_{z_2}=0$, we have
\begin{equation}\label{3.2}
m_5 z_5\left(\frac{1}{D_{54}^3}-\frac{1}{D_{52}^3}\right)+m_6 z_6\left(\frac{1}{D_{64}^3}-\frac{1}{D_{62}^3}\right)=0.
\end{equation}
Using equation (\ref{2.1}), the distances are given by:
$$D_{52}=\left[(x_5-1)^2+{y_5}^2+{z_5}^{2}\right]^{1/2}, \hspace{0.5cm}D_{62}=\left[(x_6-1)^2+{y_6}^{2}+{z_6}^{2}\right]^{1/2}, $$
$$D_{54}=\left[(x_5+1)^2+{y_5}^{2}+{z_5}^{2}\right]^{1/2}, \hspace{0.5cm}D_{64}=\left[(x_6+1)^2+{y_6}^{2}+{z_6}^{2}\right]^{1/2}. $$
Similar to those in the {\bf Proof of (A)}, the arguments can be used to show that either $x_5 x_6>0$ or $x_5=x_6=0$.

Since equations (\ref{3.1}) and (\ref{3.2}) must hold for a pyramidal central configuration, the necessary conditions can be categorized into the four cases presented in the lemma. We now proceed to demonstrate the impossibility of these cases. \\
%\begin{lemma}\label{case1}
%In the case of  $y_5 y_6>0$ and $x_5 x_6>0$ , the six masses cannot form a double pyramidal central configuration as defined in %(\ref{1.2}).
%\end{lemma}

{\bf Proof of the impossible case (i):}
If $x_5x_6>0$,  then  $\left(1/D_{54}^3-1/D_{52}^3\right)$ and $\left(1/D_{64}^3-1/D_{62}^3\right)$ must be either both positive or both negative. From
$$f_{y_4}-f_{y_2}= m_5 y_5\left(\frac{1}{D_{54}^3}-\frac{1}{D_{52}^3}\right)+m_6 y_6\left(\frac{1}{D_{64}^3}-\frac{1}{D_{62}^3}\right)=0,$$
we deduce that $y_5y_6<0$. This is a contradiction.

If  $y_5 y_6 > 0 $, we observe that both terms
$$
\left(\frac{1}{D_{53}^3} - \frac{1}{D_{51}^3} \right) \quad \hbox{and} \quad \left(\frac{1}{D_{63}^3} - \frac{1}{D_{61}^3} \right)
$$
are either both positive or both negative. From the equation
$$ f_{x_3} - f_{x_1} = m_5 x_5 \left( \frac{1}{D_{53}^3} - \frac{1}{D_{51}^3} \right) + m_6 x_6 \left( \frac{1}{D_{63}^3} - \frac{1}{D_{61}^3} \right) = 0,$$
it follows that $ x_5 x_6 < 0 $ , which leads to a contradiction.
 \\
These two contradictions prove that $x_5 x_6 > 0$ and $y_5y_6>0$ cannot be true at the same time for a pyramidal central configuration $q$.\\

In the following, it suffices to consider the remaining three possible cases.

{\bf Proof of possible case (ii):}
For this case, we assume that $x_5 x_6>0$ and $y_5=y_6=0$.
Without loss of generality, assume that $x_5>0$ and $x_6>0$ and $y_5=y_6=0$. Then $D_{51}=D_{53}=({x_5}^{2}+a^2+{z_5}^{2})^{1/2}$ and $D_{61}=D_{63}= ({x_6}^{2}+a^2+{z_6}^{2})^{1/2}$.

 We just need to prove $m_3 =m_1$. If not, suppose $m_3\not=m_1$. From $$f_{y_1}+f_{y_3}=\frac{(m_1-m_3)(8\lambda a^3-M)}{4a^2M}=0,$$ and
 $$f_{y_2}=f_{y_4}=(m_1-m_3)a\left(\frac{\lambda}{M}-\frac{1}{(a^2+1)^{3/2}}\right)=0,$$
 we have $a=\sqrt{3}/3$ and $\lambda=3\sqrt{3}M/8$. Then, from
 $$f_{y_5}=(m_1-m_3)a\left(\frac{\lambda}{M}-\frac{1}{(a^2+{x_5}^{2}+{z_5}^{2})^{3/2}}\right)=0$$
 and $$f_{y_6}=(m_1-m_3)a\left(\frac{\lambda}{M}-\frac{1}{(a^2+{x_6}^{2}+{z_6}^{2})^{3/2}}\right)=0,$$
 we obtain ${x_5}^{2}+{z_5}^{2}=1$ and ${x_6}^{2}+{z_6}^{2}=1$. Then, $f_{y_i}=0$ holds for all $i=1,2,\cdots 6$, $f_{z_i}=0$ and $f_{x_i}=0$ hold for all $i=1,3$. Under the given assumptions that $z_5 > 0$ and $z_6 < 0$, we have $z_5 = \sqrt{1 - {x_5}^2}$ and $z_6 = -\sqrt{1 - {x_6}^2}$, respectively.
 From
\begin{eqnarray*}
f_{z_2}&=& m_5z_5 \left(\frac{1}{({x_5}^{2}+{z_5}^2-2x_5+1)^{3/2}}-\frac{3\sqrt{3}}{8}\right) + m_6z_6 \left(\frac{1}{({x_6}^{2}+{z_6}^{2}-2x_6+1)^{3/2}}-\frac{3\sqrt{3}}{8}\right)\\
 &=&m_5z_5 \left(\frac{1}{(2-2x_5)^{3/2}}-\frac{3\sqrt{3}}{8}\right) + m_6z_6 \left(\frac{1}{(2-2x_6)^{3/2}}-\frac{3\sqrt{3}}{8}\right) =0,
 \end{eqnarray*}
and
\begin{eqnarray*}
 f_{z_4}&=& m_5z_5 \left(\frac{1}{({x_5}^{2}+{z_5}^2+2x_5+1)^{3/2}}-\frac{3\sqrt{3}}{8}\right) + m_6z_6 \left(\frac{1}{({x_6}^{2}+{z_6}^{2}+2x_6+1)^{3/2}}-\frac{3\sqrt{3}}{8}\right)\\
 &=&m_5z_5 \left(\frac{1}{(2+2x_5)^{3/2}}-\frac{3\sqrt{3}}{8}\right) + m_6z_6 \left(\frac{1}{(2+2x_6)^{3/2}}-\frac{3\sqrt{3}}{8}\right) =0,
 \end{eqnarray*}
 one deduce that
 $$\frac{\left(\displaystyle\frac{1}{(2-2x_5)^{3/2}}-\frac{3\sqrt{3}}{8}\right) }{\left(\displaystyle\frac{1}{(2+2x_5)^{3/2}}-\frac{3\sqrt{3}}{8}\right) } = \frac{\left(\displaystyle\frac{1}{(2-2x_6)^{3/2}}-\frac{3\sqrt{3}}{8}\right) }{\left(\displaystyle\frac{1}{(2+2x_6)^{3/2}}-\frac{3\sqrt{3}}{8}\right) } $$ for some $0<x_5<1$ and $0<x_6<1.$ Let $$g^-(u)=\left(\frac{1}{(2-2u)^{3/2}}-\frac{3\sqrt{3}}{8}\right)$$ and $$g^+(u)=\left(\displaystyle\frac{1}{(2+2u)^{3/2}}-\frac{3\sqrt{3}}{8}\right).$$
 If $x_5$ and $x_6$ satisfy $f_{z_2}=0$ and $f_{z_4}=0$, $g^-(x_5)$ and $g^-(x_6)$ must have same sign or zero, and $g^+(x_5)$ and $g^+(x_6)$ must have same sign or zero. Then, $x_5$ and $x_6$ must be in the same interval between  $(0, 1/3)$, and $(1/3,1)$. Noting that $g^-(1/3)=0$ and $g^+(-1/3)=0$. We can verify that $g^-(u)/g^+(u)$ decreases in each interval. Therefore, from $f_{z_2}=0$ and $f_{z_4}=0$, one obtains $x_5=x_6$, then $z_5=-z_6$, and then $m_5=m_6$. Then, by $f_{x_2}=0$, we have
$-m_4/4+3\sqrt{3}m_4/4=0$, that is, $m_4=0$. This contradicts $m_4>0$. This contradiction shows that $m_3=m_1.$ \\

% We have $m_1=m_3$. Then $f_{y_i}=0$ for all $i=2,4,5,6$  and $$f_{y_1}=-f_{y_3}=\frac{m_1}{4a^3}+ \frac{m_2 +m_4}{(a^2 + 1)^{3/2}} +\frac{m_5}{(a^2 + {x_5}^{2} + {z_5}^{2})^{3/2}}+\frac{m_6}{(a^2 + {x_6}^{2} + {z_6}^{2})^{3/2}} -\lambda=0.b$$.
% $fx_1=fx_3, fz_1=fz_3$

{\bf Proof of possible case (iii):} This case can be proved in a similar way to {\bf case (ii)}. We omit its proof here. \\

{\bf Proof of possible case (iv):} Assume that $x_5=x_6=y_5=y_6=0.$ From $f_{x_1}=0$, $f_{x_5}=0,$ and $f_{x_6}=0,$ we have $$(m_2-m_4)\left(\frac{1}{(a^2+1)^{3/2}} -\frac{\lambda}{M}\right)=0,$$
$$(m_2-m_4)\left(\frac{1}{({z_5}^{2}+1)^{3/2}} -\frac{\lambda}{M}\right)=0,$$
and $$(m_2-m_4)\left(\frac{1}{({z_6}^{2}+1)^{3/2}} -\frac{\lambda}{M}\right)=0.$$
If $m_4\not=m_2$, then we derive a contraction that $m_5=m_1=0$ below.

Assume that $m_4\not=m_2$, then $\lambda/M=1/(a^2+1)^{3/2}$ and $a^2={z_5}^{2}={z_6}^{2}.$ From $f_{x_4}=0$, we have
$$f_{x_4}=\frac{m_1}{\left(a^{2}+1\right)^{\frac{3}{2}}}+\frac{m_2}{4}+\frac{m_3}{\left(a^{2}+1\right)^{\frac{3}{2}}}+\frac{m_5}{\left({z_5}^{2}+1\right)^{\frac{3}{2}}}+\frac{m_6}{\left({z_6}^{2}+1\right)^{\frac{3}{2}}}-\lambda \left(\frac{m_1+2m_2+m_3+m_5+m_6}{M}\right)=0,
$$
which is equivalent to $$\frac{m_2}{4}-\frac{2m_2}{(a^2+1)^{3/2}}=0.$$
Since $m_2>0$, it follows that $a^2={z_5}^{2}={z_6}^{2}=3,$ $\lambda=M/8=(m_1+m_2+m_3+m_4+m_5+m_6)/8.$ Substituting these into equations of central configuration, we have $f_{z_1}= \left(m_5 -m_6 \right) \left(2 \sqrt{2}-3 \sqrt{3}\right)/24=0$, which implies that $m_6=m_5$. Similarly, $f_{y_5}=\left(m_1 -m_3 \right) \left(3 \sqrt{3}-2 \sqrt{2}\right)/24=0$ implies $m_3=m_1$. Then, $f_{y_1}=0$ and $f_{z_5}=0$ become
$$ \frac{\left(-3 m_1 -3 m_5 \right) \sqrt{3}}{12}+\frac{m_5 \sqrt{2}}{6}+\frac{m_1}{12}=0 \ \  {\rm and} \ \ \frac{\left(3 m_1 +3 m_5 \right) \sqrt{3}}{12}-\frac{m_1 \sqrt{2}}{6}-\frac{m_5}{12}=0,$$
which imply $m_5=m_1=0$. This contradicts the assumption that $m_5>0.$ Therefore, $m_4=m_2.$
Moreover, a similar argument shows that $m_3=m_1.$  $\Box$

\section{Proof of Theorem \ref{thm2.2}}\label{sec4}\setcounter{equation}{0}
In this whole section, we assume that \( q_5 \) and \( q_6 \) lie on the symmetric \( z \)-axis, i.e. \( x_5 = y_5 = x_6 = y_6 = 0 \). Assume that \( q \) is a pyramidal central configuration for the mass vector \( m = (m_1, m_2, \cdots, m_6) \). \\

{\bf Proof of Theorem \ref{thm2.2} (case 1):}
By Theorem \ref{thm2.1} case 3, $m_3=m_1$ and $m_4=m_2$. Suppose $z_6=-z_5.$ We only need to prove that $m_6=m_5.$

Substituting all the assumptions, after simplification, the 18 equations of central configurations reduce to six equations: $$f_{x_2}=0, f_{y_1}=0, f_{z_1}=0, f_{z_2}=0, f_{z_5}=0, f_{z_6}=0.$$

If $a\not=1,$ from $z_5>0$ and  $$f_{z_1}-f_{z_2}= (m_5-m_6)z_5\left(\frac{1}{(a^2+{z_5}^{2})^{3/2}}-\frac{1}{({z_5}^{2}+1)^{3/2}}\right) =0,$$
we have $m_6=m_5.$

If $a=1,$ we have $m_2=m_1$ because $f_{x_2}+f_{y_1}=(2\sqrt{2}-1)(m_2-m_1)/4=0$. By direct computation,
$$f_{z_5}+f_{z_6}=-\frac{\left(m_5-m_6 \right) \left(8 \lambda {z_5}^{3}-(4 m_1 +m_5 +m_6) \right)}{4 {z_5}^{2} \left(4 m_1+m_5 +m_6 \right)}=0.$$
Then if $m_6\not=m_5,$ $\lambda= (4 m_1 +m_5 +m_6)/8  {z_5}^{3}.$ $f_{z_5}=0$ becomes $f_{z_5}=m_1[({z_5}^{2}+1)^{3/2}-8 {z_5}^3 ]/[2({z_5}^{2}+1)^{3/2} {z_5}^{2}]=0$, which implies that $z_5=\sqrt{3}/3. $ Thus, $f_{x_2}=-m_1 \left(2 \sqrt{2}+1-6 \sqrt{3}\right)/4=0$, which is impossible because $m_1>0.$ Therefore $m_6=m_5.$ \\
We complete the proof for case 1 of Theorem \ref{thm2.2}.\\

{\bf Proof of Theorem \ref{thm2.2} (case 2):}
By Theorem \ref{thm2.1} case 3, $m_3=m_1$ and $m_4=m_2$.

Substituting $x_5=x_6=y_5=y_6=0$, $m_3=m_1$ and $m_4=m_2$ into the equations of central configuration (\ref{1.2}), eighteen equations are reduced to six equations below.
\begin{eqnarray}\label{4.1}
\left\{
  \begin{array}{llllll}
f_{x_2}= \displaystyle-\frac{2 {m_1}}{\left(a^{2}+1\right)^{\frac{3}{2}}}-\frac{{m_2}}{4}-\frac{{m_5}}{\left({z_5}^{2}+1\right)^{\frac{3}{2}}}-\frac{{m_6}}{\left({z_6}^{2}+1\right)^{\frac{3}{2}}}+{\lambda},\\[0.2cm]
 f_{y_1}= \displaystyle\frac{2 {m_2} a}{\left(a^{2}+1\right)^{\frac{3}{2}}}+\frac{{m_1} a}{4 \left(a^{2}\right)^{\frac{3}{2}}}+\frac{{m_5} a}{\left(a^{2}+{z_5}^{2}\right)^{\frac{3}{2}}}+\frac{{m_6} a}{\left(a^{2}+{z_6}^{2}\right)^{\frac{3}{2}}}-{\lambda} a, \\[0.2cm]
f_{z_1}=\displaystyle\frac{{m_5} {z_5}}{\left(a^{2}+{z_5}^{2}\right)^{\frac{3}{2}}}+\frac{{m_6} {z_6}}{\left(a^{2}+{z_6}^{2}\right)^{\frac{3}{2}}}-\frac{{\lambda} \left({m_5} {z_5} +{m_6} {z_6} \right)}{2 {m_1} +2 {m_2} +{m_5} +{m_6}},\\[0.2cm]
f_{z_2}=\displaystyle\frac{{m_5} {z_5}}{\left({z_5}^{2}+1\right)^{\frac{3}{2}}}+\frac{{m_6} {z_6}}{\left({z_6}^{2}+1\right)^{\frac{3}{2}}}-\frac{{\lambda} \left({m_5} {z_5} +{m_6} {z_6} \right)}{2 {m_1} +2 {m_2} +{m_5} +{m_6}}, \\[0.2cm]
f_{z_5}=\displaystyle-\frac{2 {m_1} {z_5}}{\left(a^{2}+{z_5}^{2}\right)^{\frac{3}{2}}}-\frac{2 {m_2} {z_5}}{\left({z_5}^{2}+1\right)^{\frac{3}{2}}}+\frac{{m_6} \left({z_6} -{z_5} \right)}{\left(\left({z_6} -{z_5} \right)^{2}\right)^{\frac{3}{2}}}+{\lambda} \left({z_5} -\frac{{m_5} {z_5} +{m_6} {z_6}}{2 {m_1} +2 {m_2} +{m_5} +{m_6}}\right),\\[0.2cm]
f_{z_6}=\displaystyle-\frac{2 {m_1} {z_6}}{\left(a^{2}+{z_6}^{2}\right)^{\frac{3}{2}}}-\frac{2 {m_2} {z_6}}{\left({z_6}^{2}+1\right)^{\frac{3}{2}}}+\frac{{m_5} \left(-{z_6} +{z_5} \right)}{\left(\left({z_6} -{z_5} \right)^{2}\right)^{\frac{3}{2}}}+{\lambda} \left({z_6} -\frac{{m_5} {z_5} +{m_6} {z_6}}{2 {m_1} +2 {m_2} +{m_5} +{m_6}}\right).
  \end{array}
\right.
\end{eqnarray}

We have seven variables $m_1, m_2, m_5, m_6, a, z_5, z_6,$ and $ \lambda$.
Suppose $z_6\not=-z_5.$ We only need to prove that $a=1$ and $m_2=m_1$. We first prove the following lemmas.

\begin{lemma} \label{lem4.1}
If the six masses form a central configuration with $a > 1$ and $x_5 = x_6 = y_5 = y_6 = 0$, then:
\begin{itemize}
    \item When $z_6 > -z_5$, we have $m_5 z_5 + m_6 z_6 > 0$;
    \item Conversely, when $z_6 < -z_5$, we have $m_5 z_5 + m_6 z_6 < 0$.
\end{itemize}
\end{lemma}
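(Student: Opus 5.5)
The proof will use only the difference $f_{z_1}-f_{z_2}=0$ from system (\ref{4.1}) together with a monotonicity property of an auxiliary function. In $f_{z_1}$ and $f_{z_2}$ the terms involving $\lambda$ are identical, namely $-\lambda(m_5z_5+m_6z_6)/(2m_1+2m_2+m_5+m_6)$. Subtracting therefore eliminates $\lambda$ and the total mass, and gives
\begin{equation*}
m_5z_5\,h(z_5)+m_6z_6\,h(z_6)=0,\qquad h(t):=\frac{1}{(1+t^2)^{3/2}}-\frac{1}{(a^2+t^2)^{3/2}} .
\end{equation*}
Since $a>1$, we have $h(t)>0$ for every real $t$. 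The function $h$ is even, and by our standing assumptions $z_5>0$ and $z_6<0$. The identity can therefore be rewritten as
\begin{equation*}
m_6|z_6|\,h(|z_6|)=m_5z_5\,h(z_5).
\end{equation*}

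The key step is to show that $h$ is strictly decreasing on $(0,\infty)$. Write $h(t)=g(1+t^2)-g(a^2+t^2)$ with $g(s)=s^{-3/2}$. Then
\begin{equation*}
h'(t)=2t\bigl(g'(1+t^2)-g'(a^2+t^2)\bigr).
\end{equation*}
Because $g''(s)=\tfrac{15}{4}s^{-7/2}>0$, the derivative $g'$ is strictly increasing. Since $1+t^2<a^2+t^2$ when $a>1$, we get $g'(1+t^2)<g'(a^2+t^2)$, and hence $h'(t)<0$ for $t>0$. This monotonicity argument is the only point that requires care; everything else is a rearrangement.

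Now consider the two cases.
\begin{itemize}
\item If $z_6>-z_5$, then $0<|z_6|<z_5$, so $h(|z_6|)>h(z_5)>0$. The rewritten identity then gives $m_6|z_6|=m_5z_5\,h(z_5)/h(|z_6|)<m_5z_5$, that is, $m_5z_5+m_6z_6>0$.
\item If $z_6<-z_5$, then $|z_6|>z_5$, so $h(|z_6|)<h(z_5)$. The same identity gives $m_6|z_6|>m_5z_5$, that is, $m_5z_5+m_6z_6<0$.
\end{itemize}
The same computation also shows that $z_6=-z_5$ forces $m_5=m_6$ when $a>1$, which is consistent with case~1 of Theorem \ref{thm2.2}.
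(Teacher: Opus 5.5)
Your proof is correct and is essentially the paper's argument. The paper also eliminates $\lambda$ between $f_{z_1}=0$ and $f_{z_2}=0$, obtaining its ratio identity (4.4). It then uses that $(1+u)^{-3/2}-(a^2+u)^{-3/2}$ is positive and strictly decreasing for $a>1$, working in $u=t^2$ instead of your $t$, which makes no difference.
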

\textbf{Proof.} Since $a>1$ and $x_{5}=x_{6}=y_{5}=y_{6}=0$, it follows that $({z_{5}}^{2}+a^{2})^{3/2}\neq ({z_5}^{2}+1)^{3/2}$. For the central configuration, we use the definition $M=\sum_{1\leq j\leq 6}m_{j}$ and the conditions $f_{z_1}=0$ and $f_{z_2}=0$ from equations (\ref{4.1}) to find that
\begin{equation}\label{4.2}
\frac{m_5 z_5}{(a^{2}+{z_{5}}^{2})^{\frac{3}{2}}}+\frac{m_6 z_6}{(a^{2}+z^{2}_{6})^{\frac{3}{2}}}=\lambda z_0=\lambda\frac{m_{5}z_{5}+m_{6}z_{6}}{M}
\end{equation}
and
\begin{equation}\label{4.3}
\frac{m_5 z_5}{({z_5}^{2}+1)^{\frac{3}{2}}}+\frac{m_6 z_6}{({z_6}^{2}+1)^{\frac{3}{2}}}=\lambda z_0=\lambda\frac{m_{5}z_{5}+m_{6}z_{6}}{M}.
\end{equation}
Using $({z_5}^{2}+a^{2})^{3/2}\neq ({z_5}^{2}+1)^{3/2}$, $m_{6}z_{6}<0$, equations (\ref{4.2}) and (\ref{4.3}), one computes that
\begin{eqnarray}\label{4.4}
\frac{m_5 z_5}{m_6z_6}=\frac{\displaystyle\frac{1}{(1 +{z_6}^{2})^{\frac{3}{2}}}-\frac{1}{(a^{2}+{z_6}^{2})^{\frac{3}{2}}}}{\displaystyle\frac{1}{(a^{2}+{z_5}^{2})^{\frac{3}{2}}}-\frac{1}{(1 +{z_5}^{2})^{\frac{3}{2}}}}.
\end{eqnarray}
Moreover, let
\begin{eqnarray}\label{4.5}
f(u)=\frac{1}{(1+u)^{\frac{3}{2}}}-\frac{1}{(a^{2}+u)^{\frac{3}{2}}}, \ \   {\rm where} \ \ u>0.
\end{eqnarray}
By directly differentiating $f(u)$ and using the condition $a > 1$, we have the derivative
\begin{eqnarray}\label{4.6}
f^{\prime}(u)=\frac{3}{2}\left[\frac{1}{(a^{2}+u)^{\frac{5}{2}}}-\frac{1}{(1+u)^{\frac{5}{2}}}\right]<0.
\end{eqnarray}
Thus, $f(u)$ decreases in $u$.
Therefore, under the assumption that $0 > z_6 > -z_5$ and the conditions $x_5 = x_6 = y_5 = y_6 = 0$ and $a > 1$, we deduce that
\begin{eqnarray}\label{4.7}
\frac{1}{(1+{z_6}^{2})^\frac{3}{2}}-\frac{1}{(a^{2}+{z_6}^{2})^\frac{3}{2}}
>\frac{1}{(1+{z_5}^{2})^\frac{3}{2}}-\frac{1}{(a^{2}+{z_5}^{2})^\frac{3}{2}}>0.
\end{eqnarray}
Inserting inequality (\ref{4.7}), $z_{5}>0$ and $z_{6}<0$ into equation (\ref{4.4}), we have $m_{5}z_{5}>-m_{6}z_{6}>0$, which implies that $m_{5}z_{5}+m_{6}z_{6}>0$.\\
Similarly, under the condition $z_{6} < -z_{5} < 0$, we derive the inequality:
\begin{eqnarray*}
0<\frac{1}{(1+{z_6}^{2})^\frac{3}{2}}-\frac{1}{(a^{2}+{z_6}^{2})^\frac{3}{2}}
<\frac{1}{(1+{z_5}^{2})^\frac{3}{2}}-\frac{1}{(a^{2}+{z_5}^{2})^\frac{3}{2}}.
\end{eqnarray*}
Then, combining $z_{5}>0$, $z_{6}<0$ with equation (\ref{4.4}), we have $m_{5}z_{5}+m_{6}z_{6}<0$. $\Box$

Based on Lemma \ref{lem4.1}, we obtain
\begin{lemma} \label{lem4.2}
Consider the case where $z_{5} \neq -z_{6}$ with $x_{5} = x_{6} = y_{5} = y_{6} = 0$ and $a > 1$. If the six masses form a central configuration, then $1/|{z_5}-{z_6}|^{3}<\lambda/M <\min\Big\{1/(a^{2}+{z_5}^{2})^{3/2}, \,\, 1/(a^{2}+{z_6}^{2})^{3/2}\Big\}$.
\end{lemma}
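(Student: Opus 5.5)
The plan is to rewrite the three $z$-equations that involve the off-plane masses in terms of $\mu:=\lambda/M$ and to read off signs. Write
$$A_k=\frac{1}{(a^2+z_k^2)^{3/2}},\quad B_k=\frac{1}{(1+z_k^2)^{3/2}}\ (k=5,6),\quad C=\frac{1}{|z_5-z_6|^3},\quad d=z_5-z_6>0 .$$
Since $a>1$, we have $A_k<B_k$. In (\ref{4.1}), $z_5-z_0=\big[(2m_1+2m_2+m_6)z_5-m_6z_6\big]/M$, so $f_{z_5}=0$ and $f_{z_6}=0$ become
$$2m_1z_5(\mu-A_5)+2m_2z_5(\mu-B_5)+m_6d(\mu-C)=0,$$
$$2m_1z_6(\mu-A_6)+2m_2z_6(\mu-B_6)-m_5d(\mu-C)=0.$$
Likewise, $f_{z_1}=0$ and $f_{z_2}=0$ read $m_5z_5(A_5-\mu)+m_6z_6(A_6-\mu)=0$ and $m_5z_5(B_5-\mu)+m_6z_6(B_6-\mu)=0$. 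Because $m_5z_5>0>m_6z_6$, the numbers $A_5-\mu$ and $A_6-\mu$ have the same sign (or both vanish), and the same holds for $B_5-\mu$ and $B_6-\mu$.

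\textbf{Lower bound $\mu>C$.} Divide the two displayed equations by $z_5$ and by $z_6$ respectively, and subtract. This gives
$$2m_1(A_6-A_5)+2m_2(B_6-B_5)+d(\mu-C)\Big(\frac{m_6}{z_5}-\frac{m_5}{|z_6|}\Big)=0 .$$
Suppose first $z_6>-z_5$, i.e. $|z_6|<z_5$. Then $A_6>A_5$ and $B_6>B_5$, so the first two terms are positive. By Lemma \ref{lem4.1}, $m_5z_5>m_6|z_6|$, so $m_6/z_5-m_5/|z_6|<0$. These force $\mu-C>0$. If instead $z_6<-z_5$, all three signs reverse: the first two terms are negative, and $m_6/z_5-m_5/|z_6|>0$ by the second part of Lemma \ref{lem4.1}. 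Again $\mu>C$. This step seems routine, and it is exactly where $z_6\neq -z_5$ is used.

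\textbf{Upper bound $\mu<\min\{A_5,A_6\}$.} By the same-sign property it suffices to exclude $\mu\ge A_5$ and $\mu\ge A_6$. Assume both hold. In the $f_{z_5}$ equation the $m_1$-term is $\ge 0$ and the $C$-term is $>0$ (using $\mu>C$). Hence $\mu<B_5$, and by the same-sign property $\mu<B_6$. So $A_k\le\mu<B_k$ for $k=5,6$. I would then feed this into the ratio identity (\ref{4.4}) and its $A$-counterpart obtained from $f_{z_1}=0$:
$$\frac{m_5z_5}{m_6|z_6|}=\frac{\mu-A_6}{A_5-\mu}\ (\le 0\ \text{unless both vanish})\quad\text{versus}\quad \frac{m_5z_5}{m_6|z_6|}=\frac{B_6-\mu}{\mu-B_5}<0 .$$
Since the left side is positive, this is a contradiction unless $\mu=A_5=A_6$. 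But $A_5=A_6$ forces $|z_5|=|z_6|$, which is excluded.

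The main obstacle is making this last sign bookkeeping airtight, including the boundary cases where some factor vanishes. If a gap appears there, I would try combining $f_{z_6}$ with $f_{x_2}$ (which expresses $\mu M$ through $A'=(a^2+1)^{-3/2}$ and $B_5,B_6$) to get a second inequality that contradicts $\mu\ge A_k$ directly.
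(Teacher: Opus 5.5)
The lower bound in your proposal is correct, and it takes a different route from the paper's. You divide $f_{z_5}=0$ by $z_5$ and $f_{z_6}=0$ by $z_6$ and subtract. This isolates $\mu-C$ with a coefficient $m_6/z_5-m_5/|z_6|$ whose sign is fixed by Lemma \ref{lem4.1}, so you get $\mu>C$ directly. The paper instead adds the two equations (its (\ref{4.11})) and can only conclude $\mu>C$ after it already knows $\mu<\min\{A_5,A_6\}$. Your ordering is, if anything, cleaner.

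The upper-bound step, however, does not work, because of a sign error. From $f_{z_1}=0$ you have $m_5z_5(A_5-\mu)=m_6|z_6|(A_6-\mu)$, hence
\[
\frac{m_5z_5}{m_6|z_6|}=\frac{\mu-A_6}{\mu-A_5},
\]
not $(\mu-A_6)/(A_5-\mu)$. Likewise $f_{z_2}=0$ gives $m_5z_5/(m_6|z_6|)=(B_6-\mu)/(B_5-\mu)$. The two fractions you wrote are the correct formulas for $m_5z_5/(m_6z_6)$, which is negative, so their negativity contradicts nothing. Under your standing assumption $A_k\le\mu<B_k$, both corrected right-hand sides are positive, exactly as the same-sign property you proved already guarantees. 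No sign contradiction is available; the contradiction has to come from comparing magnitudes.

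Here is the missing step. First, $\mu=A_5$ forces $\mu=A_6$ and hence $z_5^2=z_6^2$, which is excluded, so $\mu>A_5$ and $\mu>A_6$ strictly. Suppose $|z_6|<z_5$. Lemma \ref{lem4.1} gives $m_5z_5>m_6|z_6|$, so the identity above yields $\mu-A_5<\mu-A_6$, i.e.\ $A_5>A_6$. This contradicts $z_5^2>z_6^2$. The case $|z_6|>z_5$ is symmetric.

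This is essentially the paper's argument. It uses only $f_{z_1}=0$ and Lemma \ref{lem4.1}, so it needs neither $f_{z_5}$, nor your lower bound, nor the intermediate fact $\mu<B_k$, nor the vague fallback through $f_{x_2}$. Alternatively, you can salvage your own idea of comparing the two ratio identities by comparing each with $1$. When $|z_6|<z_5$ and $A_k<\mu<B_k$, the $A$-ratio lies in $(0,1)$ while the $B$-ratio exceeds $1$.
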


{\bf Proof.}  Firstly, we use Lemma \ref{lem4.1} to deduce that $\lambda/M<\min\Big\{1/(a^{2}+{z_5}^{2})^{3/2},   1/(a^{2}+{z_6}^{2})^{3/2}\Big\}$.

In fact, equation (\ref{4.2}) can be rewritten as
 \begin{eqnarray}\label{4.8}
m_5 z_5\left[\frac{1}{(a^{2}+{z_5}^{2})^{\frac{3}{2}}}-\frac{\lambda}{M}\right]=m_6 z_6\left[\frac{\lambda}{M}-\frac{1}{(a^{2}+{z_6}^{2})^{\frac{3}{2}}}\right],
\end{eqnarray}
which implies that either both $\lambda/M=1/(a^{2}+{z_5}^{2})^{3/2}$ and  $\lambda/M=1/(a^{2}+{z_6}^{2})^{3/2}$ hold, or both $\lambda/M\neq 1/(a^{2}+{z_5}^{2})^{3/2}$ and $\lambda/M\neq 1/(a^{2}+{z_6}^{2})^{3/2}$ hold. If both $\lambda/M=1/(a^{2}+{z_5}^{2})^{3/2}$ and $\lambda/M=1/(a^{2}+{z_6}^{2})^{3/2}$ hold, then $a^{2}+{z_5}^{2}=a^{2}+{z_6}^{2}$. From $z_{5}\cdot z_{6}<0$, we have $z_5=-z_6$, which contradicts $z_{5}\neq -z_{6}$. Hence, we obtain $\lambda/M\neq1/(a^{2}+{z_5}^{2})^{3/2}$ and $\lambda/M\neq1/(a^{2}+{z_6}^{2})^{3/2}$.
By equation (\ref{4.8}), we get
 \begin{eqnarray}\label{4.9}
\frac{m_5 z_5}{m_6 z_6}=\frac{\displaystyle\frac{\lambda}{M}-\frac{1}{(a^{2}+{z_6}^{2})^{\frac{3}{2}}}}{\displaystyle\frac{1}{(a^{2}+{z_5}^{2})^{\frac{3}{2}}}-\frac{\lambda}{M}}.
\end{eqnarray}
 Next, for the case of $z_{5}\neq -z_{6}$, we will prove that $\lambda/M<1/(a^{2}+{z_5}^{2})^{3/2}$ by contradiction as follows.

 From $\lambda/M\neq1/(a^{2}+{z_5}^{2})^{3/2}$, here we assume that $\lambda/M>1/(a^{2}+{z_5}^{2})^{3/2}$. Then, by $m_{5}z_{5}/(m_{6}z_{6})<0$ and equation (\ref{4.9}), we have $\lambda/M>1/(a^{2}+{z_6}^{2})^{3/2}$.

Under the condition that $0 < -z_6 < z_5$, and given that $a > 1$ and $x_5 = x_6 = y_5 = y_6 = 0$, it follows from Lemma \ref{lem4.1} that $m_5 z_5 + m_6 z_6 > 0$. Thus, from $m_{6}z_{6}<0$, we have $[m_{5}z_{5}]/[m_{6}z_{6}]<-1$. Noticed that $\lambda/M>1/(a^{2}+{z_5}^{2})^{3/2}$, then by $\lambda/M>1/(a^{2}+{z_6}^{2})^{3/2}$ and equation (\ref{4.9}), we have $1/(a^{2}+{z_6}^{2})^{3/2}<1/(a^{2}+{z_5}^{2})^{3/2}$.
On the other hand, since ${z_6}^{2}< {z_5}^{2}$, it is easy to verify that $1/(a^{2}+{z_6}^{2})^{3/2}>1/(a^{2}+{z_5}^{2})^{3/2}$. This produces a contradiction.

In the case where $-z_6 > z_5 > 0$, following an argument similar to that for the case $0<-z_6 < z_5$, Lemma \ref{lem4.1} implies that $m_5 z_5 + m_6 z_6 < 0$. Then, it follows from $m_{6}z_{6}<0$ that $-1<[m_{5}z_{5}]/[m_{6}z_{6}]<0$. Thus, inserting $\lambda/M>1/(a^{2}+{z_5}^{2})^{3/2}$ and $\lambda/M>1/(a^{2}+{z_6}^{2})^{3/2}$ into equation (\ref{4.9}), we obtain $1/(a^{2}+{z_6}^{2})^{3/2}>1/(a^{2}+{z_5}^{2})^{3/2}$. However, this result is impossible because the condition $-z_{6} > z_{5}>0$ directly implies the opposite inequality $1/(a^{2}+z_6^{2})^{3/2} < 1/(a^{2}+z_5^{2})^{3/2}$, thus creating a contradiction.
So, we conclude that $\lambda/M<1/(a^{2}+{z_5}^{2})^{3/2}$. Then, it follows from equation (\ref{4.8}) that $\lambda/M<1/(a^{2}+{z_6}^{2})^{3/2}$ and $\lambda/M<1/(a^{2}+{z_5}^{2})^{3/2}$, i.e., $\lambda/M<\min\Big\{1/(a^{2}+{z_5}^{2})^{3/2},   1/(a^{2}+{z_6}^{2})^{3/2}\Big\}$.

Secondly, we use $\lambda/M<\min\Big\{1/(a^{2}+{z_5}^{2})^{3/2}, 1/(a^{2}+{z_6}^{2})^{3/2}\Big\}$ to prove that $ 1/|{z_5}-{z_6}|^{3}<\lambda/M$. In fact, with the aid of the fifth and sixth equations of (\ref{4.1}), we have
\begin{eqnarray}\label{4.10}
\left\{
  \begin{array}{ll}
{\lambda}{z_5}=\displaystyle\frac{2 {m_1} {z_5}}{\left(a^{2}+{z_5}^{2}\right)^{\frac{3}{2}}}+\frac{2 {m_2} {z_5}}{\left({z_5}^{2}+1\right)^{\frac{3}{2}}}+\frac{{m_6} \left(-{z_6} +{z_5} \right)}{\left(\left({z_5} -{z_6} \right)^{2}\right)^{\frac{3}{2}}}+ \frac{\lambda({m_5} {z_5} +{m_6} {z_6})}{2 {m_1} +2 {m_2} +{m_5} +{m_6}}, \\[0.2cm]
-{\lambda} {z_6}=-\displaystyle\frac{2 {m_1} {z_6}}{\left(a^{2}+{z_6}^{2}\right)^{\frac{3}{2}}}-\frac{2 {m_2} {z_6}}{\left({z_6}^{2}+1\right)^{\frac{3}{2}}}+\frac{{m_5} \left(-{z_6} +{z_5} \right)}{\left(\left({z_6} -{z_5} \right)^{2}\right)^{\frac{3}{2}}}- \frac{\lambda({m_5} {z_5} +{m_6} {z_6})}{2 {m_1} +2 {m_2} +{m_5} +{m_6}},
  \end{array}
\right.
\end{eqnarray}
which implies that
\begin{eqnarray}\label{4.11}
{\lambda}(z_{5}-z_{6})=\displaystyle\frac{2 {m_1} {z_5}}{\left(a^{2}+{z_5}^{2}\right)^{\frac{3}{2}}}-\frac{2 {m_1} {z_6}}{\left(a^{2}+{z_6}^{2}\right)^{\frac{3}{2}}}+\frac{2 {m_2} {z_5}}{\left({z_5}^{2}+1\right)^{\frac{3}{2}}}-\frac{2 {m_2} {z_6}}{\left({z_6}^{2}+1\right)^{\frac{3}{2}}}+\frac{\left({m_5}+{m_6}\right)\left({z_5} -{z_6}\right)}{|{z_5}-{z_6}|^{3}}.
\end{eqnarray}

When ${z_{5}}^{2}>{z_{6}}^{2}$: inserting $a>1$, $z_6<0<z_5$ and $\lambda/M<1/(a^{2}+{z_5}^{2})^{3/2}$ into equation (\ref{4.11}), one computes that
\begin{eqnarray}\label{4.12}
\frac{M(z_5-z_6)}{(a^{2}+{z_5}^{2})^{3/2}}>{\lambda}{(z_5-z_6)}
>\displaystyle\frac{{2} ({z_5}-{z_6})}{\left(a^{2}+{z_5}^{2}\right)^{\frac{3}{2}}}m_1+\frac{{2} ({z_5}-{z_6})}{\left(a^{2}+{z_5}^{2}\right)^{\frac{3}{2}}}m_2+\frac{({m_5}+{m_6}) \left({z_5}-{z_6} \right)}{|{z_5}-{z_6}|^{3}}.
\end{eqnarray}
On the other hand, noticed that $x_5 = x_6 = y_5=y_6=0$, then from Theorem \ref{thm2.1} case 3, we have $m_3=m_1$ and $m_4=m_2$. Hence, $M=\sum_{1\leq j\leq 6}m_{j}=2m_{1}+2m_{2}+m_{5}+m_{6}$. Then, using inequality (\ref{4.12}), we have
\begin{eqnarray*}
\frac{({m_5}+{m_6})(z_5-z_6)}{(a^{2}+{z_5}^{2})^{3/2}}
>\displaystyle\frac{({m_5}+{m_6})\left({z_5}-{z_6} \right)}{|{z_5}-{z_6}|^{3}},
\end{eqnarray*}
which implies that
$|{z_5}-{z_6}|^{3}>\left(a^{2}+{z_5}^{2}\right)^{3/2}$. Inserting $|{z_5}-{z_6}|^{3}>\left(a^{2}+{z_5}^{2}\right)^{3/2}$ into inequality (\ref{4.12}), we have
\begin{eqnarray}\label{4.13}
{\lambda}{(z_5-z_6)}
>\displaystyle\frac{{2} ({z_5}-{z_6})}{|{z_5}-{z_6}|^{3}}m_1+\frac{{2} ({z_5}-{z_6})}{|{z_5}-{z_6}|^{3}}m_2+\frac{({m_5}+{m_6}) \left({z_5}-{z_6} \right)}{|{z_5}-{z_6}|^{3}},
\end{eqnarray}
and it implies that when ${z_{5}}^{2}>{z_{6}}^{2}$, we have $ 1/|{z_5}-{z_6}|^{3}<\lambda/M$.

When ${z_5}^{2}<{z_6}^{2}$: noticed that $\lambda/M<\min\Big\{1/(a^{2}+{z_5}^{2})^{3/2}, 1/(a^{2}+{z_6}^{2})^{3/2}\Big\}$, then
inserting $a>1$, $z_6<0<z_5$ and $\lambda/M<1/(a^{2}+{z_6}^{2})^{3/2}$ into equation (\ref{4.11}), we obtain
\begin{eqnarray}\label{4.14}
\frac{M(z_5-z_6)}{(a^{2}+{z_6}^{2})^{3/2}}>{\lambda}{(z_5-z_6)}
>\displaystyle\frac{{2} ({z_5}-{z_6})}{\left(a^{2}+{z_6}^{2}\right)^{\frac{3}{2}}}m_1+\frac{{2} ({z_5}-{z_6})}{\left(a^{2}+{z_6}^{2}\right)^{\frac{3}{2}}}m_2+\frac{({m_5}+{m_6}) \left({z_5}-{z_6} \right)}{\left(\left({z_6} -{z_5} \right)^{2}\right)^{\frac{3}{2}}},
\end{eqnarray}
and then using $m_3=m_1$ and $m_4=m_2$, it follows that $|{z_5}-{z_6}|^{3}>\left(a^{2}+{z_6}^{2}\right)^{3/2}$. Therefore, by inequality (\ref{4.14}), we have inequality (\ref{4.13}) holds for the case of ${z_5}^{2}<{z_6}^{2}$, too. Thus, when ${z_{5}}^{2}>{z_{6}}^{2}$, $ 1/|{z_5}-{z_6}|^{3}<\lambda/M$ holds.

Hence, in the case where $z_{5}\neq -z_{6}$, when $x_{5}=x_{6}=y_{5}=y_{6}=0$ and $a>1$: we conclude that $1/|{z_5}-{z_6}|^{3}<\lambda/M <\min\Big\{1/(a^{2}+{z_5}^{2})^{3/2}, \,\, 1/(a^{2}+{z_6}^{2})^{3/2}\Big\}$.
 $\Box$ \\

In what follows, we firstly use Lemma \ref{lem4.2} to prove that $a=1$ by contradiction. Assuming that $a\neq1$ and recalling that $x_{5}=x_{6}=y_{5}=y_{6}=0$, we note that by the symmetry of the central configuration, it suffices to consider the case $a>1$.

Recall that $a>1$ and $z_6<0$. By Lemma \ref{lem4.2}, we obtain $1/|{z_5}-{z_6}|^{3}<1/\left(a^{2}+{z_6}^{2}\right)^{3/2}$. Thus,
\begin{eqnarray}\label{4.15}
\displaystyle\frac{{m_6} {z_6}} {|{z_5}-{z_6}|^{3}}-\frac{{m_6} {z_6}}{\left(a^{2}+{z_6}^{2}\right)^{\frac{3}{2}}}>0.
\end{eqnarray}
Moreover, inserting $f_{z_{1}}=0$ into the first equation of (\ref{4.10}), it follows that
\begin{eqnarray}\label{4.16}
\displaystyle{\lambda} {z_5}=\frac{2 {m_1} {z_5}}{\left(a^{2}+{z_5}^{2}\right)^{\frac{3}{2}}}+\frac{2 {m_2} {z_5}}{\left({z_5}^{2}+1\right)^{\frac{3}{2}}}-\left[\frac{{m_6} {z_6}} {|{z_5}-{z_6}|^{3}}-\frac{{m_6} {z_6}}{\left(a^{2}+{z_6}^{2}\right)^{\frac{3}{2}}}\right] +\frac{{m_5} {z_5}}{\left(a^{2}+{z_5}^{2}\right)^{\frac{3}{2}}}+\frac{{m_6} {z_5}} {|{z_5}-{z_6}|^{3}}.
\end{eqnarray}
Combining inequality (\ref{4.15}) with equation (\ref{4.16}), we have
\begin{eqnarray*}
\displaystyle{\lambda} {z_5}<\frac{2 {m_1} {z_5}}{\left(a^{2}+{z_5}^{2}\right)^{\frac{3}{2}}}+\frac{2 {m_2} {z_5}}{\left({z_5}^{2}+1\right)^{\frac{3}{2}}}+\frac{{m_5} {z_5}}{\left(a^{2}+{z_5}^{2}\right)^{\frac{3}{2}}}+\frac{{m_6} {z_5}} {|{z_5}-{z_6}|^{3}}.
\end{eqnarray*}
Therefore, it follows from $a>1$ and $z_{5}>0$ that
\begin{eqnarray}\label{4.17}
\displaystyle{\lambda}<\frac{2 {m_1}+2 {m_2}+m_{5} }{\left({z_5}^{2}+1\right)^{\frac{3}{2}}}+\frac{{m_6}} {|{z_5}-{z_6}|^{3}}.
\end{eqnarray}
Let
\begin{eqnarray}\label{4.18}
\left\{
  \begin{array}{ll}
g(u)=m_5 z_5\left[\frac{M}{\left({z_5}^{2}+1\right)^{\frac{3}{2}}}-u\right]+m_6 z_6\left[\frac{M}{\left({z_6}^{2}+1\right)^{\frac{3}{2}}}-u\right], \ \ \  {\rm where} \ \ u>0, \\[0.4cm]
\widetilde{g}(v)=1+\displaystyle\frac{\frac{1}{\left({z_6}^{2}+1\right)^{\frac{3}{2}}}-\frac{1}{\left({z_5}^{2}+1\right)^{\frac{3}{2}}}}{\frac{1}{\left({z_5}^{2}+1\right)^{\frac{3}{2}}}-v}, \ \ \  {\rm where} \ \ v<\frac{1}{\left({z_5}^{2}+1\right)^{\frac{3}{2}}}.
  \end{array}
\right.
\end{eqnarray}
Noticed that Lemma \ref{lem4.1}, we have
\begin{eqnarray}\label{4.19}
\left\{
  \begin{array}{ll}
m_5 z_5 + m_6 z_6 > 0, \ \ {\rm when}  \ \ z_{5}>-z_{6},\\[0.4cm]
m_5 z_5 + m_6 z_6 < 0, \ \ {\rm when} \ \ z_{5}<-z_{6}.
  \end{array}
\right.
\end{eqnarray}
By equations (\ref{4.18}), inequalities (\ref{4.19}), and direct computation, one computes that the derivative
\begin{eqnarray*}
g^{\prime}(u)=-(m_5 z_5 + m_6 z_6)=\left\{
  \begin{array}{ll}
< 0, \ \ {\rm when}  \ \ z_{5}>-z_{6},\\[0.4cm]
> 0, \ \ {\rm when} \ \ z_{5}<-z_{6},
  \end{array}
\right.
\end{eqnarray*}
and the derivative
\begin{eqnarray*}
\widetilde{g}^{\prime}(v)=\left[\frac{1}{\left({z_6}^{2}+1\right)^{\frac{3}{2}}}-\frac{1}{\left({z_5}^{2}+1\right)^{\frac{3}{2}}}\right]
\displaystyle\frac{1}{\left[\frac{1}{\left({z_5}^{2}+1\right)^{\frac{3}{2}}}-v\right]^{2}}, \ \ \  {\rm where} \ \ v<\frac{1}{\left({z_5}^{2}+1\right)^{\frac{3}{2}}}.
\end{eqnarray*}
Hence, when $z_{5}>-z_{6}$, $g(u)$ decreases in $u$ but $\widetilde{g}(v)$ increase in $v$; and when $z_{5}<-z_{6}$, $g(u)$ increases in $u$ but $\widetilde{g}(v)$ decrase in $v$. Moreover, using the first equation of equations (\ref{4.18}) and the fourth equation of equations (\ref{4.1}), we have $g(\lambda)=0$.

Next, we divide the following proof into two parts.

{\bf Part 1:} We show that when $z_{5}>-z_{6}$, there is a contradiction.

In fact, by $z_{5}>-z_{6}$, we see that $g(u)$ decreases in $u$. Combining $g(\lambda)=0$ and inequality (\ref{4.17}), when $z_{5}>-z_{6}$, we deduce that
\begin{eqnarray*}
g\left(\frac{2 {m_1}+2 {m_2}+m_{5} }{\left({z_5}^{2}+1\right)^{\frac{3}{2}}}+\frac{{m_6}} {|{z_5}-{z_6}|^{3}}\right)<0,
\end{eqnarray*}
and then it follows from the first equation of equations (\ref{4.18}) that
\begin{eqnarray*}
m_{5}z_{5}\left[\frac{m_{6}}{\left({z_5}^{2}+1\right)^{\frac{3}{2}}}-\frac{{m_6}} {|{z_5}-{z_6}|^{3}}\right]+m_{6}z_{6}\Big[\frac{m_{6}}{\left({z_6}^{2}+1\right)^{\frac{3}{2}}}-\frac{{m_6}} {|{z_5}-{z_6}|^{3}}\Big]<0.
\end{eqnarray*}
Note that Lemma \ref{lem4.2} implies $1/\left(a^{2}+{z_6}^{2}\right)^{3/2}-1/|{z_5}-{z_6}|^{3}>0$. Using the fact that $-m_{6}z_{6}>0$, we have
\begin{eqnarray}\label{4.20}
\frac{{m_{5}}{z_{5}}}{-{m_{6}}{z_{6}}}<\frac{\frac{1}{\left({z_6}^{2}+1\right)^{\frac{3}{2}}}-\frac{1} {|{z_5}-{z_6}|^{3}}}{\frac{1}{\left({z_5}^{2}+1\right)^{\frac{3}{2}}}-\frac{1} {|{z_5}-{z_6}|^{3}}}.
\end{eqnarray}
On the one hand, with the aid of $1/\left({z_5}^{2}+1\right)^{3/2}-\lambda/M>0$ and the fourth equation of equations (\ref{4.1}), one computes that
\begin{eqnarray}\label{4.21}
\displaystyle\frac{\frac{1}{\left({z_6}^{2}+1\right)^{\frac{3}{2}}}-\frac{\lambda}{M}}{\frac{1}{\left({z_5}^{2}+1\right)^{\frac{3}{2}}}-\frac{\lambda}{M}}
=\frac{{m_{5}}{z_{5}}}{-{m_{6}}{z_{6}}}.
\end{eqnarray}
Using inequality (\ref{4.20}) and equation (\ref{4.21}), we get
\begin{eqnarray}\label{4.22}
\displaystyle\frac{\frac{1}{\left({z_6}^{2}+1\right)^{\frac{3}{2}}}-\frac{\lambda}{M}}{\frac{1}{\left({z_5}^{2}+1\right)^{\frac{3}{2}}}-\frac{\lambda}{M}}
<\frac{\frac{1}{\left({z_6}^{2}+1\right)^{\frac{3}{2}}}-\frac{1} {|{z_5}-{z_6}|^{3}}}{\frac{1}{\left({z_5}^{2}+1\right)^{\frac{3}{2}}}-\frac{1} {|{z_5}-{z_6}|^{3}}}.
\end{eqnarray}
On the other hand, by Lemma \ref{lem4.2} and $a>1$, we have $1/|{z_5}-{z_6}|^{3}<\lambda/M <1/(a^{2}+{z_5}^{2})^{3/2}<1/(1+{z_5}^{2})^{3/2}$.
Recall that when $z_{5}>-z_{6}$, $\widetilde{g}(v)$ increase in $v$. Then, together with the second equation of equations (\ref{4.18}),
we have
\begin{eqnarray*}
\widetilde{g}(\frac{\lambda}{M})=\displaystyle\frac{\frac{1}{\left({z_6}^{2}+1\right)^{\frac{3}{2}}}-\frac{\lambda}{M}}{\frac{1}{\left({z_5}^{2}+1\right)^{\frac{3}{2}}}-\frac{\lambda}{M}}
>\widetilde{g}(|{z_5}-{z_6}|^{3})=\frac{\frac{1}{\left({z_6}^{2}+1\right)^{\frac{3}{2}}}-\frac{1} {|{z_5}-{z_6}|^{3}}}{\frac{1}{\left({z_5}^{2}+1\right)^{\frac{3}{2}}}-\frac{1} {|{z_5}-{z_6}|^{3}}},
\end{eqnarray*}
which contradicts inequality (\ref{4.22}). So, when $z_{5}>-z_{6}$, $a>1$ is impossible.

{\bf Part 2:} We show that when $z_{5}<-z_{6}$, there is a contradiction.

By $z_{5}<-z_{6}$, we see that $g(u)$ increase in $u$. Combining $g(\lambda)=0$ and inequality (\ref{4.17}), when $z_{5}<-z_{6}$, we deduce that
\begin{eqnarray*}
g\left(\frac{2 {m_1}+2 {m_2}+m_{5} }{\left({z_5}^{2}+1\right)^{\frac{3}{2}}}+\frac{{m_6}} {|{z_5}-{z_6}|^{3}}\right)>0,
\end{eqnarray*}
and then the first equation of equations (\ref{4.18}) implies that
\begin{eqnarray*}
m_{5}z_{5}\left[\frac{m_{6}}{\left({z_5}^{2}+1\right)^{\frac{3}{2}}}-\frac{{m_6}} {|{z_5}-{z_6}|^{3}}\right]+m_{6}z_{6}\Big[\frac{m_{6}}{\left({z_6}^{2}+1\right)^{\frac{3}{2}}}-\frac{{m_6}} {|{z_5}-{z_6}|^{3}}\Big]>0.
\end{eqnarray*}
Next, we deduce a contradiction by following a procedure similar to that used in the case $z_{5}>-z_{6}$, and the only difference lies in the fact that for $z_{5}<-z_{6}$, $\widetilde{g}(v)$ decrease in $v$. Here, we omit the details.
Thus, $a>1$ is also impossible when $z_{5}<-z_{6}$.

By now, for the central configuration, we conclude that if $z_{5}\neq-z_{6}$, then $a>1$ is impossible, and then we complete the proof of $a=1$.

Furthermore, inserting $a=1$ into the first two equations of equations (\ref{4.1}), we have
\begin{eqnarray*}
\left\{
  \begin{array}{ll}
{\lambda}= \displaystyle\frac{2 {m_1}}{2^{\frac{3}{2}}}+\frac{{m_2}}{4}+\frac{{m_5}}{\left({z_5}^{2}+1\right)^{\frac{3}{2}}}+\frac{{m_6}}{\left({z_6}^{2}+1\right)^{\frac{3}{2}}},\\[0.2cm]
 {\lambda}= \displaystyle\frac{2 {m_2} }{2^{\frac{3}{2}}}+\frac{{m_1} }{4}+\frac{{m_5} }{\left(1+{z_5}^{2}\right)^{\frac{3}{2}}}+\frac{{m_6} }{\left(1+{z_6}^{2}\right)^{\frac{3}{2}}},
  \end{array}
\right.
\end{eqnarray*}
which implies that $(1/\sqrt{2}-1/4)(m_{1}-m_{2})=0$, and then $m_{1}=m_{2}$. Combining Theorem \ref{thm2.1} case 3, we arrive at the conclusion that $m_{1}=m_{2}=m_{3}=m_{4}$ and $a=1$. $\Box$

\section{Proof of Theorem \ref{thm2.3}}\label{sec5}\setcounter{equation}{0}
Aussume that $z_6 = -z_5$, $m_3 = m_1$, $m_4 = m_2$, $m_6 = m_5$, and $x_5=x_6=y_5=y_6=0$. Then, the equations of the central configuration (\ref{1.2}) are equivalent to
\begin{eqnarray}\label{5.1}
\left\{
  \begin{array}{lll}
\displaystyle\frac{2 {m_1}}{\left(a^{2}+1\right)^{\frac{3}{2}}}+\frac{{m_2}}{4}+\frac{{2m_5}}{\left({z_5}^{2}+1\right)^{\frac{3}{2}}}={\lambda},\\[0.2cm]
\displaystyle\frac{2 {m_2}}{\left(a^{2}+1\right)^{\frac{3}{2}}}+\frac{{m_1}}{4 \left(a^{2}\right)^{\frac{3}{2}}}+\frac{{2m_5}}{\left(a^{2}+{z_5}^{2}\right)^{\frac{3}{2}}}={\lambda}, \\[0.2cm]
\displaystyle\frac{2 {m_1} }{\left(a^{2}+{z_5}^{2}\right)^{\frac{3}{2}}}+\frac{2 {m_2}}{\left({z_5}^{2}+1\right)^{\frac{3}{2}}}+\frac{{2m_5}}{\left(4{z_5} ^{2}\right)^{\frac{3}{2}}}={\lambda}.
  \end{array}
\right.
\end{eqnarray}
Define
$$
u=a^{2}, \ v=z_{5}^{2}, \ A_{1}=\frac{1}{(a^{2}+1)^{\frac{3}{2}}}, \ A_{2}=\frac{1}{(a^{2})^{\frac{3}{2}}}, \ A_{3}=\frac{1}{(z_{5}^{2}+1)^{\frac{3}{2}}}, \ A_{4}=\frac{1}{(z_{5}^{2})^{\frac{3}{2}}}, \ A_{5}=\frac{1}{(a^{2}+{z_5}^{2})^{\frac{3}{2}}}.
$$
Subtracting the first equation from the second equation and from the third in equations (\ref{5.1}) gives the equivalent system
\begin{eqnarray}\label{5.2}
\left\{
  \begin{array}{ll}
F(u,\,v)=\displaystyle8(m_{1}-m_{2})A_{1}+m_{2}-m_{1}A_{2}+8m_{5}(A_{3}-A_{5})=0,\\[0.2cm]
G(u,\,v)=\displaystyle8m_{2}A_{1}+m_{1}A_{2}+8m_{5}A_{5}-8m_{1}A_{5}-8m_{2}A_{3}-m_{5}A_{4}=0.
  \end{array}
\right.
\end{eqnarray}
Fix $u>0$. Consider $G(u,\,v)$ as a function of $z_{5}^{2}>0$. As $v\rightarrow0^{+}$, we have $A_{3}\rightarrow1$, $A_{4}\rightarrow+\infty$ and $A_{5}\rightarrow A_{2}$. Thus, when $v\rightarrow0$, $G(u,\,v)\rightarrow-\infty$. On the other hand, we compute that as $v\rightarrow+\infty$, $G(u,\,v)\rightarrow8m_{2}A_{1}+m_{1}A_{2}>0$. Using continuity, for each $u>0$, there exists $v=v_{0}(u)>0$ such that $G(u,\,v_{0}(u))=0$.

Next define $H(u):=F(u,v_{0}(u))$. As $u\rightarrow0^{+}$, we have $A_{1}\rightarrow1$, $A_{2}\rightarrow+\infty$, and from $G=0$, we must have $v_{0}(u)\rightarrow0^{+}$, hence $A_{3}\rightarrow1$, $A_{5}\rightarrow+\infty$, and $A_{2}\rightarrow+\infty$. Therefore, $H(u)\rightarrow-\infty$ as $u\rightarrow0^{+}$. Similarly, as $u\rightarrow+\infty$, a direct computation show that $A_{1}\rightarrow0$, $A_{2}\rightarrow0$ and $A_{5}\rightarrow0$. Thus $G=0$ and implies
$v_{0}(u)\rightarrow+\infty$, and then $A_{3},A_{4}\rightarrow0$. Hence, $H(u)\rightarrow m_{2}>0$ as $u\rightarrow+\infty$. Again by continuity, there exists $u=u_{0}>0$ such that $H(u_{0})=0$. Then $(u_{0}, v_{0})\in\mathbb{R}^{+}\times\mathbb{R}^{+}$ satisfies equations (\ref{5.2}).

In the following, we will prove that $u_{0}\neq1$, $v_{0}\neq1$ and $u_{0}\neq v_{0}$. In fact, if $u_{0}=1$, then $A_{1}=1/(2\sqrt{2})$, $A_{2}=1$, $A_{5}=A_{3}$. Hence,
$$
F(1,\,v)=\displaystyle8(m_{1}-m_{2})A_{1}+m_{2}-m_{1}=(m_{1}-m_{2})(2\sqrt{2}-1)=0,
$$
which contradicts $m_{1}\neq m_{2}$. Thus, $u_{0}\neq1$.

If $v_{0}=1$, then $A_{3}=1/(2\sqrt{2})$, $A_{4}=1$, $A_{5}=A_{1}$. Adding the two equations in (\ref{5.2}) gives
$$
F+G=\displaystyle(m_{2}-m_{5})(1-2\sqrt{2})=0,
$$
and it implies that $m_{2}=m_{5}$, contradicting the pairwise distinctness. Thus, $v_{0}\neq1$.

If $u_{0}=v_{0}$, then $A_{1}=A_{3}$, $A_{2}=A_{4}\neq0$, $A_{5}=A_{2}/(2\sqrt{2})$. The second equation in (\ref{5.2}) becomes
$$
G=(m_{1}-m_{5})(1-2\sqrt{2})A_{2}=0,
$$
which implies $m_{1}=m_{5}$, again a contradiction. Thus, $u_{0}\neq v_{0}$.

Take $a=\sqrt{u_{0}}$ and $a\neq1$. Choose $z_{5}$ such that $0<z_{5}^{2}=v_{0}$. Then $z_{5}=\sqrt{v_{0}}\neq1$ and $z_{5}=\sqrt{u_{0}}=\sqrt{v_{0}}=a$.
Finally, choose
$$
\lambda=\frac{2m_{1}}{(a^{2}+1)^{\frac{3}{2}}}+\frac{m_{2}}{4}+\frac{2m_{5}}{\left({z_5}^{2}+1\right)^{\frac{3}{2}}},
$$
then the first equation in (\ref{5.1}) holds, and since $F(u_{0},v_{0})=G(u_{0},v_{0})=0$, the second and third equations also equals $\lambda$. Therefore, for every triple of pairwise distinct positive masses $m_{1}$, $m_{2}$ and $m_{5}$,
there exist \( x_5 = y_5 = x_6 = y_6 = 0 \),  $z_6 = -z_5$ where $z_{5}\in\{z_{5}\in\mathbb{R}^{+}: z_{5}\neq1 \  {\rm and} \ z_{5} \neq a\}$, $0<a\neq1$ and $\lambda\in\mathbb{R}$ such that the six masses form a central configuration with a non-concyclic quadrilateral base. $\Box$

\section{Proof of Theorem \ref{thm2.4}}\label{sec6}\setcounter{equation}{0}
Recall that $a>0$, $x_5,\,x_6,\,y_5,\,y_6\in\mathbb{R}$, $z_{5}>0$ and $z_{6}\geq0$. To prove Theorem \ref{thm2.4}, we require the three following key propositions.

Recall that $ q_5 $ lies outside the plane of the rhombus. If $q_6 $ lies on the plane of the rhombus, we obtain the following proposition.
\begin{proposition} \label{prop6.1}
If $q_6 $ lies on the plane of the rhombus (see Figure \ref{fig6}), then the six masses cannot form a central configuration.
\end{proposition}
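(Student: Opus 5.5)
The plan is to use the $z$-components of the equations first, to rigidify the configuration, and then to derive a contradiction from the in-plane equations. Suppose, for contradiction, that the six masses form a central configuration with $z_5>0$ and $z_6=0$. The barycenter then has $z_0=m_5z_5/M>0$.

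\textbf{Step 1: the $z$-equations.} For every $k\in\{1,2,3,4,6\}$ we have $z_k=0$, so $f_{z_k}=0$ reads
\begin{equation*}
m_5z_5\left(\frac{1}{D_{k5}^{3}}-\frac{\lambda}{M}\right)=0 .
\end{equation*}
Since $m_5z_5>0$, this gives $\lambda>0$ and $D_{k5}^3=M/\lambda$ for all $k=1,2,3,4,6$. Hence $q_5$ is equidistant from $q_1,q_2,q_3,q_4,q_6$. Projecting orthogonally onto the plane $z=0$, the foot $p=(x_5,y_5,0)$ is equidistant from the four rhombus vertices and from $q_6$. A rhombus is concyclic only if it is a square, so $a=1$. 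The circumcenter is then the origin, which forces $x_5=y_5=0$. Moreover $q_6=(\cos\theta,\sin\theta,0)$ lies on the unit circle and is distinct from $q_1,\dots,q_4$. Finally, $f_{z_5}=0$ reduces to the same relation $\lambda=M/D^3$, where $D^2=1+z_5^2$, so it gives nothing new.

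\textbf{Step 2: reduction to a planar problem.} Write $M'=M-m_5$ and let $c'$ be the barycenter of the five planar bodies. For a planar body $k$, the attraction from $q_5$ has in-plane component $-m_5q_k/D^3$, and $\lambda c_0^{\rm plane}=\lambda M'c'/M$. Substituting $\lambda=M/D^3$, the in-plane equations become
\begin{equation*}
\sum_{j\neq k,\ j\in\{1,2,3,4,6\}}\frac{m_j(q_j-q_k)}{|q_j-q_k|^3}+\frac{M'}{D^3}\,(q_k-c')=0 .
\end{equation*}
So the five coplanar bodies form a planar co-circular central configuration: four at the vertices of the unit square and a fifth at angle $\theta$, with $\lambda'=M'/D^3$.

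\textbf{Step 3: contradiction in the planar problem.} To exploit the square, I would mimic the proof of Theorem~\ref{thm2.1}. I would take differences such as $f_{y_1}+f_{y_3}$ and $f_{x_2}+f_{x_4}$, together with the tangential component of the equation for body $6$, namely its dot product with $(-\sin\theta,\cos\theta)$. Because $|q_6-q_k|$ depends on $\theta$ asymmetrically unless $\theta$ is a multiple of $\pi/4$, these combinations should force $\theta$ onto a diagonal direction, say $\theta=\pi/4$ after using the symmetry of the square. They should also force the matching mass equalities, such as $m_2=m_3$ and $m_1=m_4$. With this symmetry, a few scalar equations remain in $m_1,\dots,m_4,m_6$ and $z_5$. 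I would then show that these are incompatible with positive masses, for instance by showing that the radial equation for $q_6$ and the radial equation for a square vertex give opposite signs for $\lambda'-M'$-type quantities. The facts available are that $\lambda'<M'$ since $D>1$, and that all mutual distances are at most $2$.

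\textbf{Main obstacle.} I expect Step 3 to be the hard part. The angle $\theta$ is a free parameter, so the exclusion of a fifth body on the circle through a square must be carried out by a careful sign or monotonicity analysis, in the style of the $g^\pm$ argument of Theorem~\ref{thm2.1}. If a clean monotonicity argument fails, I would fall back on a direct symbolic computation of the reduced planar equations in $\theta$, as the paper does elsewhere with Maple.
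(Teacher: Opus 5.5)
Your Steps 1 and 2 are correct. Step 1 is exactly the paper's opening: the $z$-equations give $D_{15}=\dots=D_{65}$, hence $x_5=y_5=0$, $a=1$ and $x_6^2+y_6^2=1$. Your planar reduction in Step 2 is also a valid reformulation. The gap is Step 3, which is a forecast rather than an argument, and the forecast points the wrong way.

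You never use the one combination that closes the proof: subtract the $y$-equations at the opposite vertices $q_2$ and $q_4$. Each term drops out for a simple reason:
\begin{itemize}
\item since $y_2=y_4=y_5=0$, the $\lambda$-terms cancel and the $m_5$-terms vanish;
\item the $m_4$-term in $f_{y_2}$ and the $m_2$-term in $f_{y_4}$ are zero;
\item the $m_1$- and $m_3$-terms cancel because $D_{21}=D_{41}$ and $D_{23}=D_{43}$.
\end{itemize}
What remains is
\[
f_{y_2}-f_{y_4}=m_6\,y_6\left(\frac{1}{D_{26}^{3}}-\frac{1}{D_{46}^{3}}\right)=0,
\qquad D_{26}^2=(x_6-1)^2+y_6^2,\quad D_{46}^2=(x_6+1)^2+y_6^2 ,
\]
so $y_6=0$ or $x_6=0$. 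Together with $x_6^2+y_6^2=1$, this puts $q_6$ at $(\pm1,0,0)$ or $(0,\pm1,0)$, on top of a vertex of the square. That collision is the contradiction, and this is precisely the paper's proof. Equivalently, $f_{x_1}-f_{x_3}=m_6x_6\,(D_{16}^{-3}-D_{36}^{-3})$ gives the same conclusion.

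Your expectation that the symmetry combinations force $\theta$ onto $\pi/4$, leaving a residual system for a delicate sign or monotonicity analysis, is mistaken. At $\theta=\pi/4$ you have $y_6>0$ and $D_{26}<D_{46}$, so the identity above is strictly positive and that symmetric candidate is excluded at once. The identity in fact forces $\theta$ to be a multiple of $\pi/2$, i.e.\ a vertex, not a midpoint of an arc.

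The combinations you listed, $f_{y_1}+f_{y_3}$ and $f_{x_2}+f_{x_4}$, are sums. They retain the terms $-2\lambda y_0$ (respectively $-2\lambda x_0$) and mass differences such as $m_3-m_1$, so they do not isolate $\theta$. As written, the proposal leaves the actual contradiction unproved. The missing idea is this antisymmetric difference at a pair of opposite vertices, which turns your anticipated hard part into a one-line computation.
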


\textbf{Proof.}
We argue by contradiction. Since $q_6 $ lies on the plane of the rhombus, we have $z_6=0$.
By direct computation,
$$
f_{z_k}=\frac{m_5 z_5}{D_{k5}^3}-\lambda\left(0-\frac{m_5 z_5}{M}\right)=0,
\qquad k=1,2,3,4,6.
$$
Hence,
$$
D_{15}=D_{25}=D_{35}=D_{45}=D_{65},
$$
where
$$
\begin{cases}
D_{15}=\sqrt{{x_5}^{2}+(y_5+a)^{2}+{z_5}^{2}}, \quad
D_{25}=\sqrt{(x_5-1)^{2}+{y_5}^{2}+{z_5}^{2}}, \\[0.3cm]
D_{35}=\sqrt{{x_5}^{2}+(y_5-a)^{2}+{z_5}^{2}}, \quad
D_{45}=\sqrt{(x_5+1)^{2}+{y_5}^{2}+{z_5}^{2}}, \\[0.3cm]
D_{65}=\sqrt{(x_5-x_6)^{2}+(y_5-y_6)^{2}+{z_5}^{2}}.
\end{cases}
$$
From $D_{25}=D_{45}$, it follows that $x_5=0$.
Given that $a > 0$, and using the conditions $x_5 = 0$ and $D_{15} = D_{35}$, we conclude that $y_5 = 0$. Combined with $D_{15} = D_{25}$, this implies $a=1$.
Then, using $D_{15}=D_{65}$, we have
${x_6}^{2}+{y_6}^{2}=1.$
Noticed that
\begin{eqnarray*}
f_{y_k}= \displaystyle\sum\limits_{{j\neq k\atop{{1\leq j\leq 6}}}}\frac{m_{j} (y_{j}-y_{k})}{D_{kj}^{3}}+\lambda (y_{k}-y_{0})=0,
\qquad k=1,2,3,4,6,
\end{eqnarray*}
then using $x_5 = 0$ and $y_5 = 0$, we obtain
\begin{equation}\label{6.1}
\begin{cases}
\displaystyle f_{y_2}=\frac{a m_{3}}{D_{23}^{3}}-\frac{a m_{1}}{D_{21}^{3}}+\frac{y_6 m_{6}}{D_{26}^{3}}
-\frac{\lambda}{M}\bigl(a m_3-a m_1+y_6 m_6\bigr)=0, \\[0.5cm]
\displaystyle f_{y_4}=\frac{a m_{3}}{D_{43}^{3}}-\frac{a m_{1}}{D_{41}^{3}}+\frac{y_6 m_{6}}{D_{46}^{3}}
-\frac{\lambda}{M}\bigl(a m_3-a m_1+y_6 m_6\bigr)=0,
\end{cases}
\end{equation}
where
\[
D_{23}=D_{21}=D_{43}=D_{41}=\sqrt{2}, \qquad
D_{26}=\sqrt{(x_6-1)^2+{y_6}^{2}}, \qquad
D_{46}=\sqrt{(x_6+1)^2+{y_6}^{2}}.
\]
Subtracting the second equation from the first in equations \eqref{6.1}, we have
\[
y_6 \left\{\frac{1}{\bigl[(x_6-1)^2+{y_6}^{2}\bigr]^{3/2}}-\frac{1}{\bigl[(x_6+1)^2+{y_6}^{2}\bigr]^{3/2}}\right\} m_6=0.
\]
Since $m_6>0$, it follows that either $y_6=0$ or $x_6=0$. We now consider the two cases separately.  \\

\medskip
\textbf{Case 1.} Suppose $x_6=0$. Then, from ${x_6}^2+{y_6}^2=1$, we obtain $y_6=\pm 1$. Thus,
\[
q_6=(0,1,0) \quad \text{or} \quad q_6=(0,-1,0).
\]
Since $a=1$, the point $q_6$ coincides with either $q_3$ or $q_1$, contradicting the definition of a central configuration. \\

\medskip
\textbf{Case 2.} Suppose $y_6=0$. Then, from ${x_6}^2+{y_6}^2=1$, we obtain $x_6=\pm 1$. Thus,
\[
q_6=(1,0,0) \quad \text{or} \quad q_6=(-1,0,0).
\]
Again, since $a=1$, the point $q_6$ coincides with either $q_2$ or $q_4$, contradicting the definition of a central configuration.
\medskip
In both cases, we arrive at a contradiction. Therefore, our initial assumption $z_6=0$ must be false, completing the proof of Proposition~\ref{prop6.1}.
 $\Box$\\

To facilitate the later analysis, we now consider the case when $q_6$ lies off the plane of the rhombus and state two key propositions.
\begin{proposition} \label{prop6.2}
Assume $z_6>0$. No matter whether $ q_5 $ lies in the first, second, third or fourth octant, the six masses cannot form a central configuration.
\end{proposition}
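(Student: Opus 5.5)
The plan is to argue by contradiction, reusing the sign arguments from the proof of Lemma \ref{lem3.1}. The only change is that now $z_5>0$ and $z_6>0$ have the same sign rather than opposite signs. I read the hypothesis ``$q_5$ lies in the first, second, third or fourth octant'' as $x_5\neq 0$ and $y_5\neq 0$ (with $z_5>0$), and I assume $z_6>0$. The strategy is to show first that $q_6$ must lie in the octant ``opposite'' to $q_5$, and then that one further difference of equations cannot vanish.

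\emph{Step 1: locate $q_6$.} Consider $f_{z_3}-f_{z_1}=0$. Since $z_1=z_3=0$ and the $\lambda$-terms cancel (the barycentric $z_0$ is common), the rhombus masses contribute nothing, and
\[
m_5 z_5\Bigl(\frac{1}{D_{53}^3}-\frac{1}{D_{51}^3}\Bigr)+m_6 z_6\Bigl(\frac{1}{D_{63}^3}-\frac{1}{D_{61}^3}\Bigr)=0,
\]
exactly as in (\ref{3.1}). Now $m_5z_5>0$ and $m_6z_6>0$, so the two brackets are either both zero or of opposite signs. The first bracket has the sign of $y_5$, because $D_{53}<D_{51}$ if and only if $y_5>0$, and it is nonzero since $y_5\neq0$. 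Hence $y_6\neq 0$ and $y_5y_6<0$. The same argument applied to $f_{z_4}-f_{z_2}=0$ gives $x_5x_6<0$. In particular $x_6y_6$ has the same sign as $x_5y_5$, and both are nonzero.

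\emph{Step 2: contradiction.} Now use $f_{y_4}-f_{y_2}=0$. I will check that the contributions of $m_1,m_3$ cancel: since $D_{21}=D_{23}=D_{41}=D_{43}=\sqrt{1+a^2}$, the terms $m_1(-a)/D_{k1}^3+m_3a/D_{k3}^3$ are identical for $k=2,4$. The contributions of $m_2,m_4$ vanish because $y_2=y_4=0$, and the $\lambda$-terms cancel since $y_2=y_4$. This leaves
\[
m_5 y_5\Bigl(\frac{1}{D_{54}^3}-\frac{1}{D_{52}^3}\Bigr)+m_6 y_6\Bigl(\frac{1}{D_{64}^3}-\frac{1}{D_{62}^3}\Bigr)=0 .
\]
The bracket for $q_k$ ($k=5,6$) has sign $-\operatorname{sgn}(x_k)$, since $D_{k4}>D_{k2}$ if and only if $x_k>0$. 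So each term has sign $-\operatorname{sgn}(x_ky_k)$. By Step 1 both terms are nonzero with the same sign, so their sum cannot vanish, which is a contradiction. Equivalently, one could use $f_{x_3}-f_{x_1}=0$ symmetrically.

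The main point needing care is the bookkeeping in Step 2: confirming that the base masses and the center-of-mass terms cancel exactly in $f_{y_4}-f_{y_2}$. This relies on the rhombus symmetry $D_{21}=D_{41}$ and $D_{23}=D_{43}$, and holds for any $a>0$, not just $a=1$. Once that is verified, the four octants are handled simultaneously by the sign argument, and no case split is needed.
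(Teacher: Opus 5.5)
Your proof is correct and follows the paper's argument. The paper also uses $f_{z_2}-f_{z_4}=0$ and $f_{z_1}-f_{z_3}=0$ to force $q_6$ into the opposite octant (it shows $x_6<0$, $y_6<0$ when $q_5$ is in the first octant), and then gets the contradiction from $f_{y_2}-f_{y_4}=0$. The one difference is that your tracking of the sign of $x_ky_k$ handles all four octants at once, whereas the paper writes out the first octant and says the other three are analogous.
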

\textbf{Proof.} Firstly, we employ a proof by contradiction to show that if $q_5$ lies in the first octant (i.e., $x_5 > 0$, $y_5 > 0$, and $z_5 > 0$; see Figure \ref{fig4}), then the six masses cannot form a central configuration.

Recall that
\begin{eqnarray}\label{6.2}
\left\{
  \begin{array}{lll}
f_{x_k}= \displaystyle\sum\limits_{{j\neq k\atop{{1\leq j\leq 6}}}}\frac{m_{j} (x_{j}-x_{k})}{D_{kj}^{3}}+\lambda (x_{k}-x_{0})=0,\\  [0.2cm]
f_{y_k}= \displaystyle\sum\limits_{{j\neq k\atop{{1\leq j\leq 6}}}}\frac{m_{j} (y_{j}-y_{k})}{D_{kj}^{3}}+\lambda (y_{k}-y_{0})=0,  \\[0.2cm]
f_{z_k}= \displaystyle\sum\limits_{{j\neq k\atop{{1\leq j\leq 6}}}}\frac{m_{j} (z_{j}-z_{k})}{D_{kj}^{3}}+\lambda (z_{k}-z_{0})=0,
  \end{array}
\right.
\end{eqnarray}
for $k = 1, 2, \dots, 6$. Here, $(x_0, y_0, z_0)$ is the center of mass, given by
 \begin{eqnarray*}
\displaystyle x_0= \frac{\sum_{1\leq j\leq 6}m_j x_j}{\sum_{1\leq j\leq 6} m_j}, \ \ y_0= \frac{\sum_{1\leq j\leq 6}m_j y_j}{\sum_{1\leq j\leq 6} m_j} \ \ \text{and} \ \ z_0= \frac{\sum_{1\leq j\leq 6}m_j z_j}{\sum_{1\leq j\leq 6} m_j}.
 \end{eqnarray*}
Then, the proof of Proposition \ref{prop6.2} proceeds in two steps.

\textbf{Step 1.} We show that under the assumption of Proposition \ref{prop6.2}, if the six masses form a central configuration, then $x_6<0$ and $y_6<0$.

By direction computation, we have
 \begin{eqnarray}\label{6.3}
 &&f_{z_2}-f_{z_4}= \displaystyle\left\{{\frac{1}{\left[\sqrt{(x_5-1)^2+{y_5}^2+{z_5}^{2}}\right]^3}-\frac{1}{\left[\sqrt{(x_5+1)^2+{y_5}^2+{z_5}^{2}}\right]^3}}\right\}m_5z_5 \nonumber \\[0.3cm]
 &&\quad\quad\quad+\left\{\frac{1}{\left[\sqrt{(x_6-1)^2+{y_6}^2+{z_6}^{2}}\right]^3}-\frac{1}{\left[\sqrt{(x_6+1)^2+{y_6}^2+{z_6}^{2}}\right]^3}\right\}m_6z_6=0
  \end{eqnarray}
and
 \begin{eqnarray}\label{6.4}
 &&f_{z_1}-f_{z_3}=\displaystyle\left\{\frac{1}{\left[\sqrt{{x_5}^2+(y_5-a)^2+{z_5}^{2}}\right]^3}-\frac{1}{\left[\sqrt{{x_5}^2+(y_5+a)^2+{z_5}^{2}}\right]^3}\right\}m_5z_5 \nonumber \\[0.3cm]
 &&\quad\quad\quad+\left\{\frac{1}{\left[\sqrt{{x_6}^2+(y_6-a)^2+{z_6}^{2}}\right]^3}-\frac{1}{\left[\sqrt{{x_6}^2+(y_6+a)^2+{z_6}^{2}}\right]^3}\right\}m_6z_6=0.
  \end{eqnarray}

On the one hand, by $x_5>0$, we have
\begin{eqnarray}\label{6.5}
\displaystyle\frac{1}{\left[\sqrt{(x_5-1)^2+{y_5}^2+{z_5}^{2}}\right]^3}-\frac{1}{\left[\sqrt{(x_5+1)^2+{y_5}^2+{z_5}^{2}}\right]^3}>0.
 \end{eqnarray}
Hence, combining $z_{5}>0$, $z_{6}>0$, $m_{5}>0$ and $m_{6}>0$, for equation (\ref{6.3}) to hold, the following inequality must be satisfied:
 \begin{eqnarray*}
\displaystyle\frac{1}{\left[\sqrt{(x_6-1)^2+{y_6}^2+{z_6}^{2}}\right]^3}-\frac{1}{\left[\sqrt{(x_6+1)^2+{y_6}^2+{z_6}^{2}}\right]^3}<0,
 \end{eqnarray*}
which implies that $x_6<0$.

On the other hand, using $y_5>0$, $a>0$ and equation (\ref{6.4}), one computes that
\begin{eqnarray}\label{6.6}
\displaystyle\frac{1}{\left[\sqrt{{x_5}^2+(y_5-a)^2+{z_5}^{2}}\right]^3}-\frac{1}{\left[\sqrt{{x_5}^2+(y_5+a)^2+{z_5}^{2}}\right]^3}>0.
 \end{eqnarray}
Under the conditions $z_5>0$, $z_{6}>0$, $a>0$, $m_5>0$, and $m_6>0$,
for equation (\ref{6.4}) to be valid, it follows that
 \begin{eqnarray*}
\displaystyle\frac{1}{\left[\sqrt{{x_6}^2+(y_6-a)^2+{z_6}^{2}}\right]^3}-\frac{1}{\left[\sqrt{{x_6}^2+(y_6+a)^2+{z_6}^{2}}\right]^3}<0.
 \end{eqnarray*}
Thus, $y_6<0$.

\textbf{Step 2.} We demonstrate that no central configuration exists under the assumptions of Proposition \ref{prop6.2}.

By direct computation, we have
\begin{eqnarray}\label{6.7}
 &&f_{y_2}-f_{y_4}=\left\{{\frac{1}{\Big[\sqrt{(x_5-1)^2+{y_5}^2+{z_5}^{2}}\Big]^3}-\frac{1}{\Big[\sqrt{(x_5+1)^2+{y_5}^2+{z_5}^{2}}\Big]^3}}\right\}m_5y_5 \nonumber \\[0.3cm]
 &&\quad\quad\quad+\left\{\frac{1}{\Big[\sqrt{(x_6-1)^2+{y_6}^2+{z_6}^{2}}\Big]^3}-\frac{1}{\Big[\sqrt{(x_6+1)^2+{y_6}^2+{z_6}^{2}}\Big]^3}\right\}m_6y_6=0.
  \end{eqnarray}
On the other hand, since $x_5>0$, $y_5>0$, $x_6<0$, $y_6<0$, $m_5>0$, and $m_6>0$, one can show that
\begin{eqnarray*}
 &&\displaystyle \left\{{\frac{1}{\Big[\sqrt{(x_5-1)^2+{y_5}^2+{z_5}^{2}}\Big]^3}-\frac{1}{\Big[\sqrt{(x_5+1)^2+{y_5}^2+{z_5}^{2}}\Big]^3}}\right\}m_5y_5 \nonumber \\[0.3cm]
 &&\quad\quad\quad+\left\{\frac{1}{\Big[\sqrt{(x_6-1)^2+{y_6}^2+{z_6}^{2}}\Big]^3}-\frac{1}{\Big[\sqrt{(x_6+1)^2+{y_6}^2+{z_6}^{2}}\Big]^3}\right\}m_6y_6>0,
\end{eqnarray*}
which contradicts equation (\ref{6.7}). So, if $q_5 $ lies in the first octant, then there is no central configuration.

%Through the above \textbf{Steps 1--2}, we have shown that if $q_5$ lies in the first octant, then six masses cannot form a central configuration. Subsequently, the same method is applied to the cases of the second, third, and fourth octants. The details are omitted due to the similarity in the procedure.
%By now, we conclude that if $q_5$ lies in any of the first four octants, the six masses cannot form a central configuration.

Through the above \textbf{Steps 1--2}, we have demonstrated that the six masses cannot form a central configuration if \( q_5 \) lies in the first octant.

The same method applies analogously to the cases where \( q_5 \) lies in the second, third, and fourth octants. The derivations are omitted due to the similarity in the reasoning.

We therefore conclude that no central configuration exists if \( q_5 \) is located in any of the first four octants. $\Box$

\begin{remark}\label{rem5.1}
The same methods can be applied to the cases where \( q_6 \) lies in the first, second, third, and fourth octants. We had same conclusions that no central configuration exists if \( q_6 \) is located in any of the first four octants.
\end{remark}

\begin{proposition} \label{prop6.3}
For a central configuration under equation (\ref{2.1}) with $z_{5}>0$ and $z_{6}\geq0$,
if $q_5$ lies on the positive $Z$-axis (see Figure \ref{fig5}),
then $q_6$ must also lie on the positive $Z$-axis, and the rhombus must be a square. Moreover, the values of the masses located at the four vertices of the square are equal.
\end{proposition}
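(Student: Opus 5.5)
Set $x_5=y_5=0$, $z_5>0$, $z_6\ge 0$. Proposition~\ref{prop6.1} rules out $z_6=0$, so $z_6>0$. The plan is to use the $z$-difference equations to force $q_6$ onto the axis, then the $x,y$ equations at $q_5$ and $q_6$ to force symmetric masses, and finally the remaining base equations to force $a=1$ and equal masses.

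\textbf{Step 1: $q_6$ lies on the positive $Z$-axis.} With $x_5=y_5=0$ we have $D_{52}=D_{54}$ and $D_{51}=D_{53}$, so the $m_5$ terms vanish in (\ref{6.3}) and (\ref{6.4}). Since $m_6z_6>0$, these identities give $D_{62}=D_{64}$ and $D_{61}=D_{63}$. Explicitly,
\[
(x_6-1)^2+y_6^2=(x_6+1)^2+y_6^2,\qquad x_6^2+(y_6-a)^2=x_6^2+(y_6+a)^2,
\]
hence $x_6=y_6=0$. If $z_6=z_5$ then $q_6=q_5$, a collision, so $z_6\neq z_5$.

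\textbf{Step 2: symmetric masses.} Now both off-plane bodies sit on the $z$-axis, in the same half-space.
\begin{itemize}
\item In $f_{x_5}=0$ the only contributions are $m_2/D_{52}^3-m_4/D_{54}^3$ from the base and $-\lambda x_0$ from the barycentre; the $m_6$ term is zero because $x_6=x_5=0$. Since $D_{52}=D_{54}$ and $x_0=(m_2-m_4)/M$, this gives $(m_2-m_4)\bigl(1/(1+z_5^2)^{3/2}-\lambda/M\bigr)=0$.
\item The same computation for $f_{x_6}$ gives $(m_2-m_4)\bigl(1/(1+z_6^2)^{3/2}-\lambda/M\bigr)=0$.
\item If $m_2\neq m_4$, both brackets vanish, so $z_5^2=z_6^2$. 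With $z_5,z_6>0$ this means $z_5=z_6$, contradicting Step~1. Hence $m_4=m_2$.
\item The identical argument with $f_{y_5},f_{y_6}$ gives $m_3=m_1$.
\end{itemize}
This step is actually easier than case (iv) of Lemma~\ref{lem3.1}, because same-side placement already excludes $z_5^2=z_6^2$.

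\textbf{Step 3: square base and equal masses.} Substituting into (\ref{1.2}) leaves the reduced system analogous to (\ref{4.1}): $f_{x_2},f_{y_1},f_{z_1},f_{z_2},f_{z_5},f_{z_6}$, with $z_6>0$. I will follow the proof of Theorem~\ref{thm2.2} (case 2).
\begin{itemize}
\item Subtracting $f_{z_1}$ from $f_{z_2}$ gives
\[
m_5z_5\Bigl(\tfrac{1}{(a^2+z_5^2)^{3/2}}-\tfrac{1}{(1+z_5^2)^{3/2}}\Bigr)+m_6z_6\Bigl(\tfrac{1}{(a^2+z_6^2)^{3/2}}-\tfrac{1}{(1+z_6^2)^{3/2}}\Bigr)=0 .
\]
\item Both coefficients $m_5z_5$ and $m_6z_6$ are positive. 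For $a\neq 1$ both brackets are nonzero and share the sign of $1-a$, so the sum cannot vanish. Hence $a=1$.
\item This is simpler than Lemmas~\ref{lem4.1}--\ref{lem4.2}, since there the signs of $z_5$ and $z_6$ differed.
\item With $a=1$, subtracting $f_{y_1}=0$ from $f_{x_2}=0$ gives $(1/\sqrt2-1/4)(m_1-m_2)=0$, as at the end of Section~\ref{sec4}. Hence $m_1=m_2=m_3=m_4$.
\end{itemize}

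\textbf{Main obstacle.} I expect the bookkeeping of Step~2 to need the most care: the exact form of $f_{x_5}$, including the barycentre term $\lambda x_0$, and checking that no other terms survive.
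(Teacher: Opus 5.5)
Your proposal is correct and follows essentially the same route as the paper. The $z$-difference equations (\ref{6.3})--(\ref{6.4}) put $q_6$ on the axis, and a sign argument on $f_{z_2}-f_{z_1}$ forces $a=1$ (the paper uses $f_{z_2}-f_{z_3}$). The $x$- and $y$-equations at $q_5,q_6$ give $m_2=m_4$ and $m_1=m_3$ (the paper takes the differences $f_{x_5}-f_{x_6}$ and $f_{y_5}-f_{y_6}$ directly), and $f_{x_2}+f_{y_1}$ finishes. The only differences are cosmetic: you do the mass step before the squareness step, and you explicitly justify $z_6>0$ via Proposition~\ref{prop6.1} and $z_6\neq z_5$ via non-collision, both of which the paper uses without comment.
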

\textbf{Proof.} Note that $q_5$ lies on the positive $Z$-axis and $q_6 $ lies off the plane containing the rhombus. Then, by equation (\ref{2.1}) and $z_{6}\geq0$, we get $x_5=y_5=0$, $z_{5}>0$ and $z_{6}>0$.
The remaining proof is divided into two steps.

\textbf{Step 1.} We prove that $q_6$ lies on the positive $Z$-axis and the rhombus is a square.

Observe that $x_5=0,$  $ z_5>0,$ and $ z_{6}>0$. Combining $m_6>0$, equation (\ref{6.3}) implies that
\begin{eqnarray}\label{6.8}
\left\{\frac{1}{\left[\sqrt{(x_6-1)^2+{y_6}^2+{z_6}^{2}}\right]^3}-\frac{1}{\left[\sqrt{(x_6+1)^2+{y_6}^2+{z_6}^{2}}\right]^3}\right\}m_6z_6=0.
\end{eqnarray}
It follows that $x_6=0$. Similarly, by $y_5=0$, equation (\ref{6.4}) can be rewritten as
\begin{eqnarray}\label{6.9}
\left\{\frac{1}{\left[\sqrt{{x_6}^2+(y_6-a)^2+{z_6}^{2}}\right]^3}-\frac{1}{\left[\sqrt{{x_6}^2+(y_6+a)^2+{z_6}^{2}}\right]^3}\right\}m_6z_6=0,
\end{eqnarray}
and it implies that $y_6=x_6=0$.
Hence, using $x_5=y_5=0$, the coordinates of $q_5$ and $q_6$ are $(0,0,z_{5})$ and $(0,0,z_{6})$. Therefore, $q_6$ lies on the positive $Z$-axis.

Next, we show that the rhombus is a square. In fact, %in (\ref{6.3}), subtracting the third equation from the second equation, we obtain
\begin{eqnarray}\label{6.10}
f_{z_2}-f_{z_3}=\displaystyle\Big[\frac{1}{\left(\sqrt{1+{z_5}^{2}}\right)^3}-\frac{1}{\left(\sqrt{a^2+{z_5}^{2}}\right)^3}\Big]m_5z_5+
\Big[\frac{1}{\left(\sqrt{1+{z_6}^{2}}\right)^3}-\frac{1}{\left(\sqrt{a^2+{z_6}^{2}}\right)^3}\Big]m_6z_6=0.
\end{eqnarray}
Note that $a>0$, $z_{5}>0, z_{6}>0$, $m_5>0$, and $m_6>0$, we have the following conclusions:
\begin{eqnarray*}
\left\{
\begin{array}{ll}
{\rm if} \,\,a>1,\,\,{\rm then}\,\,\displaystyle\frac{1}{\left(\sqrt{1+{z_5}^{2}}\right)^3}-\frac{1}{\left(\sqrt{a^2+{z_5}^{2}}\right)^3}>0 \ \ {\rm and} \ \ \frac{1}{\left(\sqrt{1+{z_6}^{2}}\right)^3}-\frac{1}{\left(\sqrt{a^2+{z_6}^{2}}\right)^3}>0,\nonumber \\[0.3cm]{\rm which}\,\,{\rm imply}\,\,{\rm that}\,\,{\rm equation}\,\, (\ref{6.10})\,\,{\rm does}\,\,{\rm not}\,\, {\rm hold}; \\[0.5cm]
{\rm if}\,\,0<a<1,\,\,{\rm then}\,\,\displaystyle\frac{1}{\left(\sqrt{1+{z_5}^{2}}\right)^3}-\frac{1}{\left(\sqrt{a^2+{z_5}^{2}}\right)^3}<0\,\,{\rm and}\,\,\frac{1}{\left(\sqrt{1+{z_6}^{2}}\right)^3}-\frac{1}{\left(\sqrt{a^2+{z_6}^{2}}\right)^3}<0,\nonumber \\[0.3cm]{\rm which}\,\,{\rm imply}\,\,{\rm that}\,\,{\rm equation} \,\, (\ref{6.10})\,\,{\rm does}\,\,{\rm not}\,\, {\rm hold}.
 \end{array}
\right.
\end{eqnarray*}
Hence, $a=1$. Then, using equation (\ref{2.1}), we conclude that if $q_5$ lies on the positive $Z$-axis, then the rhombus must be a square.

\textbf{Step 2.} We prove that $m_1=m_2=m_3=m_4$.

Firstly, we prove that $m_1 = m_3$ and $m_2 = m_4$.

From the \textbf{Step 1} in the proof of Proposition \ref{prop6.3}, we have $a=1$, and the coordinates of $q_5$ and $q_6$ are $(0,0,z_{5})$ and $(0,0,z_{6})$, respectively. Note that,
\begin{eqnarray*}
f_{x_5}-f_{x_6}= \Big[\frac{1}{\left(\sqrt{1+{z_5}^{2}}\right)^3}-\frac{1}{\left(\sqrt{1+{z_6}^{2}}\right)^3}\Big](m_2-m_4)=0,
\end{eqnarray*}
and
\begin{eqnarray*}
f_{y_5}-f_{y_6}= \Big[\frac{1}{\left(\sqrt{1+{z_5}^{2}}\right)^3}-\frac{1}{\left(\sqrt{1+{z_6}^{2}}\right)^3}\Big](m_3-m_1)=0.
\end{eqnarray*}
Since $z_{6}/z_{5}\neq 1$,  $1/\left(\sqrt{1+{z_5}^{2}}\right)^3-1/\left(\sqrt{1+{z_6}^{2}}\right)^3\neq0$. This implies that $m_2=m_4$ and $m_1=m_3.$

Secondly, we prove that $m_1=m_2=m_3=m_4$.

Using  $x_5=x_6=y_5=y_6=0$, $a=1$, $m_1 = m_3$ and $m_2 = m_4$, we have
\begin{eqnarray*}
f_{x_2} + f_{y_1}= (m_3-m_4)\left(\frac{1}{4}-\frac{\sqrt{2}}{2}\right)=0,
\end{eqnarray*}
which implies that $m_3=m_4. $ Hence, $m_1=m_2=m_3=m_4$. $\Box$\\

Now, we use Propositions \ref{prop6.1}--\ref{prop6.3} to prove Theorem \ref{thm2.4}.

\textbf{Proof of Theorem \ref{thm2.4}.} We recall that the rhombus is a square (i.e., $a=1$). Proposition \ref{prop6.2} and Remark \ref{rem5.1} show that if the six masses form a central configuration, then $q_5$ and $q_6$ must lie on coordinate planes. Moreover, Proposition \ref{prop6.1} establishes that for the central configuration, $q_6$ must be located off the $XOY$-plane, which implies that $z_{6}>0$. Consequently, using the assumption that $q_5$ is located off the $XOY$-plane, we have $q_5$ and $q_6$ are located either in the $YOZ$-plane or the $XOZ$-plane. Furthermore, for the central configuration, when $x_5=0$, it follows from equation (\ref{6.8}) that $x_6=0$. Similarly, if $ y_5 = 0$, it follows directly from equation (\ref{6.9}) that $y_6 = 0$.

Without loss of generality, we assume that $q_5$ and $q_6$ lie on the $YOZ$-plane, i.e.,
\begin{eqnarray*}
q_5 = (0, y_5, z_{5})\,\,{\rm and}\,\,q_6 = (0, y_6, z_{6}),
\end{eqnarray*}
where $z_{5}> 0$ and $z_{6} > 0$. In particular, if $y_5 = 0$, $q_5$ lies on $z$-axis. Proposition \ref{prop6.3} proves that $x_5 = x_6 = y_5=y_6=0$. Therefore, we only need to exclude the cases where $y_5 > 0$ and $y_5 < 0$.

Now we use a proof by contradiction to show that when \( y_5 > 0 \), the configuration $[q_1, q_2, \dots, q_6]$ is not a central configuration. Assuming that
$[q_1, q_2, \dots, q_6]$ is a central configuration with $x_5=x_6=0$ and $a=1$, we deduce a contradiction as follows.

From equations \eqref{6.2},
\begin{eqnarray}\label{6.11}
\left\{
\begin{array}{llll}
 f_{z_2}-f_{z_3}= \displaystyle\Big[\frac{1}{\left(\sqrt{1+{y_5}^2+{z_5}^{2}}\right)^{3}}-\frac{1}{\left(\sqrt{(y_5-1)^2+{z_5}^{2}}\right)^{3}}\Big]m_5z_5 +\\[0.4cm]
 \quad\quad\quad\quad\quad\quad \Big[\frac{1}{\left(\sqrt{1+{y_6}^2+{z_6}^{2}}\right)^{3}}-\frac{1}{\left(\sqrt{(y_6-1)^2+{z_6}^{2}}\right)^{3}}\Big]m_6z_6=0,\\[0.5cm]
f_{z_2}-f_{z_1}=  \displaystyle\Big[\frac{1}{\left(\sqrt{1+{y_5}^2+{z_5}^{2}}\right)^{3}}-\frac{1}{\Big(\sqrt{(y_5+1)^2+{z_5}^{2}}\Big)^{3}}\Big]m_5 z_5+\\[0.4cm]
 \quad\quad\quad\quad\quad\quad \Big[\frac{1}{\left(\sqrt{1+{y_6}^2+{z_6}^{2}}\right)^{3}}-\frac{1}{\Big(\sqrt{(y_6+1)^2+{z_6}^{2}}\Big)^{3}}\Big]m_6z_6=0.
 \end{array}
\right.
 \end{eqnarray}

 Denote
\begin{eqnarray*}
\left\{
  \begin{array}{lllll}
C=\displaystyle\frac{1}{\left[\sqrt{(y_5-1)^2+{z_5}^{2}}\right]^3}-\displaystyle\frac{1}{\left[\sqrt{1+{y_5}^2+{z_5}^{2}}\right]^3}, \\[0.3cm]
\widetilde{C}=\displaystyle\frac{1}{\left[\sqrt{1+{y_5}^2+{z_5}^{2}}\right]^3}-\frac{1}{\left[\sqrt{(y_5+1)^2+{z_5}^{2}}\right]^3}, \\[0.3cm]
D=\displaystyle\frac{1}{\left[\sqrt{(y_6+1)^2+{z_6}^{2}}\right]^3}-\frac{1}{\left[\sqrt{1+{y_6}^2+{z_6}^{2}}\right]^3}, \\[0.3cm]
\widetilde{D}=\displaystyle\frac{1}{\left[\sqrt{1+{y_6}^2+{z_6}^{2}}\right]^3}-\frac{1}{\left[\sqrt{(y_6-1)^2+{z_6}^{2}}\right]^3}.
\end{array}
\right.
\end{eqnarray*}
Let matrix $B$ be defined as follows:
\begin{eqnarray}\label{6.12}
B=\left(
     \begin{array}{cc}
   -C &  \widetilde{C} \\
  \widetilde{D} &  -D
 \end{array}
   \right).
\end{eqnarray}
Then, from system (\ref{6.11}) and equation (\ref{6.12}) it follows that
\begin{eqnarray*}
B\left(
     \begin{array}{c}
  m_5z_5 \\
 m_6 z_6
 \end{array}
   \right)=\left(
     \begin{array}{c}
 0 \\
 0
 \end{array}
   \right)
\end{eqnarray*}
and the determinant of matrix $B$ is $|B|= CD-\widetilde{C}\widetilde{D}$.

\begin{eqnarray*}
C-\widetilde{C}=\frac{1}{\left[\sqrt{(y_5-1)^2+{z_5}^{2}}\right]^3}-\displaystyle\frac{2}{\left[\sqrt{1+{y_5}^2+{z_5}^{2}}\right]^3}
+\frac{1}{\left[\sqrt{(y_5+1)^2+{z_5}^{2}}\right]^3}.
\end{eqnarray*}
Let $1+{y_5}^2+{z_5}^{2}=\xi>0$. Then $\left(\sqrt{1+2y_5/\xi}\right)^{3}\left(\sqrt{\xi-2{y_5}}\right)^3>0$, $0<y_5/\xi<1/2$ and

\begin{eqnarray*}
C-\widetilde{C}=\frac{\left(\sqrt{1+2\frac{y_5}{\xi}}\right)^3-2\left(\sqrt{1-2\frac{y_5}{\xi}}\right)^3\left(\sqrt{1+2\frac{y_5}{\xi}}\right)^3+\left(\sqrt{1-2\frac{y_5}{\xi}}\right)^3}
{\left(\sqrt{1+2\frac{y_5}{\xi}}\right)^3\left(\sqrt{\xi-2{y_5}}\right)^3}.
\end{eqnarray*}
Moreover, denote $v=1+2y_5/\xi$, then $1-2y_5/\xi=2-v$ and
\begin{eqnarray*}
h(v)=\left(\sqrt{v}\right)^3-2\left(\sqrt{2-v}\right)^{3}\left(\sqrt{v}\right)^{3}+\left(\sqrt{2-v}\right)^{3},
\end{eqnarray*}
where $1<v<2$ and $ 0<2-v<1$. So, the derivative of $h(v)$ is given by
\begin{eqnarray*}
h^{\prime}(v)=\frac{3}{2}\sqrt{v}-\frac{3}{2}\sqrt{2-v}-3\sqrt{v}\sqrt{2-v}(2-2v).
 \end{eqnarray*}
Since $1 < v < 2$, we have
\begin{eqnarray*}
0 < 2 - v < 1 ,\,\, -2 < 2 - 2v < 0.
\end{eqnarray*}
This implies that
\begin{eqnarray*}
\frac{3}{2}\sqrt{v} - \frac{3}{2}\sqrt{2 - v} > 0,\,\,-3\sqrt{v}\sqrt{2 - v} (2 - 2v) > 0.
\end{eqnarray*}
Consequently, $h^{\prime}(v) > 0$, which implies that the function $h(v)$ increases on the interval $(1, 2)$.

Furthermore, as $v \rightarrow 1 $, $h(v) \rightarrow0 $. Since $h(v) $ increases monotonically, it follows that for all $ v \in (1, 2)$,
$h(v) > 0$, i.e., $C - \widetilde{C} > 0$.

Similarly, we can prove that $D-\widetilde{D}>0$.  Then, since $C> \widetilde{C}>0$, and $D> \widetilde{D}>0$, it follows that  $|B|= CD-\widetilde{C}\widetilde{D}>0$. Thus, system (\ref{6.11}) has only zero solution $(m_5z_5,m_6z_6)^{T}=(0,0)^{T}$, which contradicts $m_5>0$ and $m_6>0$. Hence, when \( y_5 > 0 \), $[q_1, q_2, \dots, q_6]$ is not a central configuration.

By similar argument, if $y_5 < 0$, we can deduce the non-existence of the central configuration. This completes the proof of Theorem \ref{thm2.4}. $\Box$\\

\section{Proof of Theorem \ref{thm2.5}}\label{sec7}\setcounter{equation}{0}

\textbf{Proof.} We use Propositions \ref{prop6.1}--\ref{prop6.3} to prove Theorem \ref{thm2.5}.

{\bf To prove Theorem \ref{thm2.5} (1):} From Proposition \ref{prop6.2}, we see that $q_5$ must lie on a coordinate plane. Then, combining $q_5= (x_5, y_5,z_{5})$ and $z_{5}>0$, it follows that $q_5$ lies in either the $XOZ$-plane or the $YOZ$-plane.
Recall that $z_{6}>0$. Moreover, if $q_5$ belongs to the $XOZ$-plane, then $y_5=0$. Then, from equation (\ref{6.4}) and $z_{5},z_{6}\neq0$, we have $y_6=0$. Thus, $q_6=(x_6,0,z_{6})$, i.e., $q_6$ also belongs to the $XOZ$-plane. On the other hand, if $q_5$ belongs to the $YOZ$-plane, then $x_5=0$. From equation (\ref{6.3}) and $z_{5},z_{6}\neq0$, we have $x_6=0$. Therefore, $q_6=(0,y_6,z_{6})$, i.e., $q_6$ also belongs to the $YOZ$-plane.

By now, we conclude that $q_5$ and $q_6$ both lie in $XOZ$-plane or both lie in the $YOZ$-plane.

In what follows, for the spatial central configuration, no matter if $q_{5}$ lies on the positive $Z$-axis, then the rhombus must be a square. Then, order to prove Theorem \ref{thm2.5} (2) and (3), it suffices to consider the case where $q_5$ is located off the plane containing the rhombus.

{\bf To prove Theorem \ref{thm2.5} (2):}
 When $q_5$ belongs to the $XOZ$-plane, then by Theorem \ref{thm2.5} (1), we have $q_6=(x_6,0,z_{6})$. The remaining proof of Theorem \ref{thm2.5} (2) is discussed in three cases:

\medskip
\textbf{Case 1.} Assume $x_5 = 0$, since $q_5$ is located off the plane containing the rhombus with $y_5 = 0$, and $z_{5},z_{6}>0$, Proposition \ref{prop6.3} ({\bf Step 1}) would force the rhombus to be a square, which contradicts the assumption of Theorem \ref{thm2.5}. Therefore, $x_5\neq0$.

\medskip
\textbf{Case 2.} Assume $x_5 > 0$. Then inequality (\ref{6.5}) holds, and from the first step in the proof of Proposition \ref{prop6.2}, we have $x_6 < 0$.

\medskip
\textbf{Case 3.} Assume $x_5<0$, then
\begin{eqnarray*}
\displaystyle\frac{1}{\left[\sqrt{(x_5-1)^2+{y_5}^2+{z_5}^{2}}\right]^3}-\frac{1}{\left[\sqrt{(x_5+1)^2+{y_5}^2+{z_5}^{2}}\right]^3}<0.
 \end{eqnarray*}
Hence, combining $z_{6}/z_{5}>0$, $m_{5}>0$ and $m_{6}>0$, for equation (\ref{6.3}) to hold, we must have
 \begin{eqnarray*}
\displaystyle\frac{1}{\left[\sqrt{(x_6-1)^2+{y_6}^2+{z_6}^{2}}\right]^3}-\frac{1}{\left[\sqrt{(x_6+1)^2+{y_6}^2+{z_6}^{2}}\right]^3}>0,
 \end{eqnarray*}
which implies that $x_6>0$.

From the \textbf{Cases 1-3}, we have $x_5x_6<0$.

In the following, for the case of \( q_5 \) and \( q_6 \) both lie in the $XOZ$-plane, we use $x_5x_6<0$ and $y_{5}=y_{6}=0$ to prove that $m_{1}=m_{3}$, $m_{2}=m_{4}$ and $m_{5}=m_{6}$.

Firstly, we show that $m_{1}=m_{3}$ by contradiction, and we assume that $m_{1}\neq m_{3}$.

From equation (\ref{2.1}), $y_{5}=y_{6}=0$ and $x_{0}=(\sum_{1\leq j\leq 6}m_{j}x_{j})/(\sum_{1\leq j\leq 6}m_j)$, then $f_{x_{3}}=0$ implies that
\begin{eqnarray*}
\bigg[\frac{1}{(1+a^2)^{\frac{3}{2}}}-\frac{\lambda}{\displaystyle\sum_{1\leq j\leq 6}m_j}\bigg](m_1-m_3)=0.
\end{eqnarray*}
Therefore, $m_1=m_3$ or $\lambda=[\sum_{1\leq j\leq 6}m_j]/\left(1+a^2\right)^{3/2}$. Thanks to
$m_1\neq m_3$, we have $\lambda=[\sum_{1\leq j\leq 6}m_j]/\left(1+a^2\right)^{3/2}$.

Recall that $y_{5}=y_{6}=0$, then by direct computation, we have
\begin{equation*}
\left\{
  \begin{array}{ll}
D_{53}^2-D_{51}^2=\left[x_5^2+\left(y_5-a\right)^2+z_5{ }^2\right]-\left[x_5^2+\left(y_5-a\right)^2+z_5{ }^2\right]=-4 a y_5=0,\\
D_{63}^2-D_{61}^2=\left[x_6^2+\left(y_6-a\right)^2+z_6{ }^2\right]-\left[x_6^2+\left(y_6-a\right)^2+z_6{ }^2\right]=-4 a y_6=0,
  \end{array}
\right.
\end{equation*}
and it implies that $1/D^3_{53}=1/D^3_{51}$ and $1/D^3_{63}=1/D^3_{61}$. Hence with the aid of $1/D_{12}^3=1/D_{14}^3=1/D_{32}^3=1/D_{34}^3=(1+a^2)^{3/2}$ and $f_{y_{1}}=f_{y_{3}}=0$, one computes that
\begin{eqnarray*}
(m_3-m_1)(\frac{1}{D_{31}^3}-\frac{\lambda}{\displaystyle\sum_{1\leq j\leq 6}m_j})=0.
\end{eqnarray*}
Note that $m_{1}\neq m_{3}$, then thanks to equation (\ref{2.1}) and $\lambda=[\sum_{1\leq j\leq 6}m_j]/\left(1+a^2\right)^{3/2}$, one can verify that $\sqrt{1+a^{2}}=2a$, which implies that $a=\sqrt{3}/3$ and $\lambda=[3\sqrt{3}\sum_{1\leq j\leq 6}m_j]/8$.

On the other hand, note that equations
(\ref{1.2}), $D_{15}=D_{51}$, $D_{35}=D_{53}$, $D_{16}=D_{61}$ and $D_{56}=D_{65}$, then from $f_{y_{5}=0}$ and $f_{y_{6}=0}$, we have
\begin{equation}\label{7.1}
\left\{
  \begin{array}{ll}
\displaystyle\frac{-am_1}{D_{51}^3}+\frac{am_3}{D_{53}^3}=\frac{\lambda}{\displaystyle\sum_{1\leq j\leq 6}m_j}\left(am_3-am_1\right),\\
\displaystyle\frac{-am_1}{D_{61}^3}+\frac{am_3}{D_{63}^3}=\frac{\lambda}{\displaystyle\sum_{1\leq j\leq 6}m_j}\left(am_3-am_1\right).
  \end{array}
\right.
\end{equation}
Inserting $1/D_{51}^3=1/D_{53}^3$, $1/D_{61}^3=1/D_{63}^3$ and $a>0$ into equation (\ref{7.1}), one computes that
\begin{equation*}
\left\{
  \begin{array}{ll}
\displaystyle(m_3-m_1)(\frac{1}{D_{51}^3}-\frac{\lambda}{\displaystyle\sum_{1\leq j\leq 6}m_j})=(m_3-m_1)(\frac{1}{D_{53}^3}-\frac{\lambda}{\displaystyle\sum_{1\leq j\leq 6}m_j})=0,\\
\displaystyle(m_3-m_1)(\frac{1}{D_{61}^3}-\frac{\lambda}{\displaystyle\sum_{1\leq j\leq 6}m_j})=(m_3-m_1)(\frac{1}{D_{63}^3}-\frac{\lambda}{\displaystyle\sum_{1\leq j\leq 6}m_j})=0.
  \end{array}
\right.
\end{equation*}
Then with the help of $m_{1}\neq m_{3}$ and $\lambda=3\sqrt{3}[\sum_{1\leq j\leq 6}m_j]/8$, we have $D_{51}=D_{53}=D_{61}=D_{63}=2/\sqrt{3}$.

Note that in this situation, $y_{5}=0$ and $m_{1}\neq m_{3}$, thus $a^{2}=1/3$ and $D_{51}=D_{53}=2/\sqrt{3}$. Then the triangle $q_{5}q_{1}q_{3}$ is an isosceles triangle (see Figure \ref{fig4}). Denote the midpoint of segment
$q_{1}q_{3}$ by point $O$, and then $O=(0,0,0)$. Thus, $|q_{1}-O|=D_{13}/2=\sqrt{3}/3$ and $q_{5}O$ is perpendicular to the plane which containing the rhombus $q_{1}q_{2}q_{3}q_{4}$, and then
 $|q_{5}-O|=\sqrt{D^{2}_{51}-|q_{1}-O|^{2}}=1$. Moreover, in the right triangle $q_{5}q_{1}O$, we have $D^{2}_{51}=|q_{1}-O|^{2}+|q_{5}-O|^{2}=(\sqrt{3}/3)^{2}+1=4/3$. On the other hand, by equation (\ref{2.1}) and $y_{5}=0$, it is easy to see that $D^{2}_{51}=x^{2}_{5}+a^{2}+z^{2}_{5}=x^{2}_{5}+z^{2}_{5}+1/3$, which implies that $x^{2}_{5}+z^{2}_{5}=1$.

In the right triangle $q_{5}q_{2}O$, by $q_{2}=(1,0,0)$, $O=(0,0,0)$ and $|q_{5}-O|=1$, then $D^{2}_{52}=|q_{5}-q_{2}|^{2}=|q_{2}-O|^{2}+|q_{5}-O|^{2}=1+1=2$. Combining $y_{5}=0$, equation (\ref{2.1}) and $x^{2}_{5}+z^{2}_{5}=1$, we have
\begin{eqnarray*}
2=D^{2}_{52}=(x_{5}-1)^{2}+z^{2}_{5}=(x^{2}_{5}+z^{2}_{5})-2x_{5}+1=2-2x_{5},
\end{eqnarray*}
which implies that $x_{5}=0$, and it contradicts with the assumption that $x_{5}\neq0$.
So $m_{1}= m_{3}$.

Secondly, based upon $m_{1}=m_{3}$, we deduce $m_{2}=m_{4}$ as follows.

By equation (\ref{2.1}) and $y_{5}=y_{6}=0$, we have $D_{51}=D_{53}$ and $D_{61}=D_{63}$. Then, inserting $m_{1}= m_{3}$ and $y_{5}=y_{6}=0$ into $f_{y_{5}}+f_{y_{6}}=0$, we have
$$
\frac{-am_{1}}{D^{3}_{51}}+\frac{-am_{3}}{D^{3}_{53}}+\frac{am_{1}}{D^{3}_{61}}+\frac{am_{3}}{D^{3}_{63}}=\frac{-2am_{1}}{D^{3}_{51}}+\frac{am_{1}}{D^{3}_{61}}=0,
$$
which implies that $D_{51}=D_{61}$.

 Using $f_{x_{1}}=0$, $f_{y_{1}}=0$  and $f_{z_{1}}=0$, one computes that
\begin{equation}\label{7.2}
\left\{
  \begin{array}{lll}
\displaystyle\frac{m_{2}}{D^{3}_{21}}+\frac{-m_{4}}{D^{3}_{41}}+\frac{m_{5}x_{5}}{D^{3}_{51}}+\frac{m_{6}x_{6}}{D^{3}_{61}}=\lambda\displaystyle\frac{m_{2}-m_{4}+m_{5}x_{5}+m_{6}x_{6}}{M},\\[0.3cm]
\displaystyle\frac{m_{2}a}{D^{3}_{21}}+\frac{2m_{3}a}{D^{3}_{31}}+\frac{m_{4}a}{D^{3}_{41}}+\frac{m_{5}a}{D^{3}_{51}}+\frac{m_{6}a}{D^{3}_{61}}=\lambda a,\\[0.3cm]
\displaystyle \frac{m_{5}z_{5}}{D^{3}_{51}}+\frac{m_{6}z_{6}}{D^{3}_{61}}=\lambda\displaystyle\frac{m_{5}z_{5}+m_{6}z_{6}}{M}.
  \end{array}
\right.
\end{equation}
Recall that $z_{5}>0$, $z_{6}>0$ and $D_{51}=D_{61}$. Then the third equation of (\ref{7.2}) implies that $\lambda/M=1/D^{3}_{51}=1/D^{3}_{61}$. Thus, using $D_{21}=D_{41}$ and $m_{1}=m_{3}$, the first and second equations are reduced to
\begin{equation}\label{7.3}
\left\{
  \begin{array}{lll}
\displaystyle\frac{m_{2}-m_{4}}{D^{3}_{21}}=\lambda\displaystyle\frac{m_{2}-m_{4}}{M}=\displaystyle\frac{m_{2}-m_{4}}{D^{3}_{51}},\\[0.3cm]
\displaystyle\frac{m_{2}+m_{4}}{D^{3}_{21}}+\frac{2m_{3}}{D^{3}_{31}}=\frac{m_{2}+2m_{3}+m_{4}}{D^{3}_{51}}.
  \end{array}
\right.
\end{equation}

In the following, we prove $m_{2}=m_{4}$ by contradiction, and we assume that $m_{2}\neq m_{4}$.

Note that $m_{2}\neq m_{4}$, then from the first equation of (\ref{7.3}), we obtain $D_{21}=D_{51}$. Combining the second equation of (\ref{7.3}), we have $D_{34}=D_{21}=D_{31}$. Then using equation (\ref{2.1}), we see that the triangle $q_{1}q_{3}q_{4}$ is an equilateral triangle (see Figure \ref{fig4}). This implies that in the right triangle $q_{1}q_{4}O$, the angle $\angle q_{1}q_{4}O=\pi/6$, and then $|Oq_{1}|=|Oq_{4}|$. So $a=1$, and it contradicts $a\neq1$. Hence, $m_{2}=m_{4}$.

Thirdly, we use $y_{5}=y_{6}=0$, $x_{5}x_{6}<0$, $m_{1}=m_{3}$ and $m_{2}=m_{4}$ to prove $\sqrt{3}/3<a<1$, $m_{5}=m_{6}$, $x_{5}=-x_{6}$ and $z_{5}=z_{6}$ as the following.

Recall that the masses satisfy $m_{1}=m_{3}$ and $m_{2}=m_{4}$. Since $x_{5}x_{6}<0$, $y_{5}=y_{6}=0$, $z_{5}>0$, and $z_{6}>0$, without loss of generality, we assume the six positions are
\begin{eqnarray*}
\left\{
  \begin{array}{ll}
q_1=(0,-a,0), \,\, q_2=(1,0,0), \,\, q_3=(0,a,0),\\ [0.3cm]
q_4=(-1,0,0), \,\, q_5=(x_5,0,z_5), \,\, q_6=(x_6,0,z_6),
  \end{array}
\right.
\end{eqnarray*}
where $x_5>0$, $x_6<0$, $z_5>0$, and $z_6>0$.

For the central configuration, by \eqref{1.2}, there exists a constant $\lambda\in\mathbb{R}$ such that
\begin{eqnarray}\label{7.4}
\sum\limits_{{j\neq k\atop{{1\leq j\leq 6}}}}
\frac{m_{j} (q_{j}-q_{k})}{|q_{j}-q_{k}|^{3}}+\lambda (q_{k}-q_{0})=0, \quad \  \  k=1,2,\cdots,6,
\end{eqnarray}
where the barycenter $q_{0}$ is given by $q_{0}=(\sum_{1\leq j\leq 6}m_{j}q_{j})/M$ with $M=\sum_{1\leq j\leq 6}m_{j}$. Then
using
$M(q_k-q_0)=\sum_{1\leq j\neq k\leq 6}m_j(q_k-q_j)$, we obtain
\begin{equation}\label{7.5}
\sum\limits_{{j\neq k\atop{{1\leq j\leq 6}}}}m_j(q_j-q_k)(D_{kj}^{-3}-\frac{\lambda}{M})=0,
\qquad k=1,2,\cdots,6.
\end{equation}

For $k=1$, the $x$ and $z$ components of \eqref{7.5} are
\[
m_{5}x_{5}(\frac{1}{D^{3}_{15}}-\frac{\lambda}{M})+m_{6}x_{6}(\frac{1}{D^{3}_{16}}-\frac{\lambda}{M})=0, \quad
m_{5}z_{5}(\frac{1}{D^{3}_{15}}-\frac{\lambda}{M})+m_{6}z_{6}(\frac{1}{D^{3}_{16}}-\frac{\lambda}{M})=0.
\]
The determinant of this system is $m_{5}m_{6}(x_{5}z_{6}-x_{6}z_{5})>0$.
Consequently, $D_{15}^{-3}=D_{16}^{-3}=\lambda/M>0$, and hence
\begin{equation}\label{7.6}
{x_5}^2+{z_5}^2={x_6}^2+{z_6}^2=:R^2,
\qquad \frac{\lambda}{M}=(a^2+R^2)^{-3/2}.
\end{equation}
In particular, $D_{35}^{-3}=D_{36}^{-3}=\lambda/M$ as well.

Let $c=1+R^2$ and we define
\[
\Theta(t)=(c-2t)^{-3/2}+(c+2t)^{-3/2},\ \
J(t)=(c-2t)^{-3/2}-(c+2t)^{-3/2}\quad \text{and} \quad
K(t)=\Theta(t)-2\frac{\lambda}{M},
\]
where $t>0$.
These functions will only be evaluated at $t=x_{5}$ and $-x_{6}$.
Their denominators are positive because
$c-2x_{5}=D_{25}^2>0$ and $c+2x_{6}=D_{46}^2>0$.
Since $x_{5}>0,-x_{6}>0$, we have $J(x_{5}),J(-x_{6})>0$.
The remaining distances to the horizontal pair satisfy
\[
D_{25}^2=c-2x_{5}, \ \  D_{45}^2=c+2x_{5}, \ \
D_{46}^2=c+2x_{6}, \ \ D_{26}^2=c-2x_{6}.
\]

The $z$ components of \eqref{7.5} for $k=2,4$ are
\begin{equation*}
\left\{
  \begin{array}{ll}
\displaystyle m_{5}z_{5}(\frac{1}{D^{3}_{25}}-\frac{\lambda}{M})+m_{6}z_{6}(\frac{1}{D^{3}_{26}}-\frac{\lambda}{M})=0,\\[0.3cm]
\displaystyle m_{5}z_{5}(\frac{1}{D^{3}_{45}}-\frac{\lambda}{M})+m_{6}z_{6}(\frac{1}{D^{3}_{46}}-\frac{\lambda}{M})=0.
  \end{array}
\right.
\end{equation*}
Adding and subtracting these equations, and adding the $x$
components for $k=2,4$, respectively, yields
\begin{equation}\label{7.7}
\left\{
  \begin{array}{lll}
m_{5}z_{5}K(x_{5})+m_{6}z_{6}K(-x_{6})=0,\\[0.3cm]
m_{5}z_{5}J(x_{5})=m_{6}z_{6}J(-x_{6}),\\[0.3cm]
m_{5}[x_{5}K(x_5)-J(x_{5})]-m_{6}[-x_{6}K(-x_{6})-J(-x_{6})]=0.
  \end{array}
\right.
\end{equation}
Set $P=x_{5}z_{6}-x_{6}z_{5}>0$ and $\eta=(z_{6}-z_{5})/P$.
The first two equations in \eqref{7.7} give
$K(x_{5})/J(x_{5})=-K(-x_{6})/J(-x_{6})$.
Writing this common ratio as $s$ and substituting it into the
third equation, we obtain
\[
0=m_{5}J(x_{5})(x_{5}s-1)+m_{6}J(-x_{6})(-x_{6}s+1)
=\frac{m_{6}J(-x_{6})}{z_{5}}\bigl[Ps+z_{5}-z_{6}\bigr].
\]
It follows that $s=\eta$, and therefore
\begin{equation}\label{7.8}
K(x_{5})=\eta J(x_{5}),\qquad K(-x_{6})=-\eta J(-x_{6}).
\end{equation}

On the one hand, the first equation in \eqref{7.7} implies
\[
\frac{\lambda}{M}=\frac{m_{5}z_{5}\Theta(x_{5})+m_{6}z_{6}\Theta(-x_{6})}{2(m_{5}z_{5}+m_{6}z_{6})}.
\]
Strict convexity of $t\mapsto t^{-3/2}$ on $(0,\infty)$ gives
$\Theta(x_{5})>\Theta(0)=2c^{-3/2}$ and $\Theta(-x_{6})>\Theta(0)=2c^{-3/2}$.
Thus $\lambda/M>c^{-3/2}$. Together with \eqref{7.6},
we have $a^2<1$. On the other hand,
using $a^{2}<1$ and $c=1+R^{2}$, we have $D_{45}^2=c+2x_{5}>a^2+R^2$,
$D_{26}^2=c-2x_{6}>a^2+R^2$, $D_{45}^{-3}-\lambda/M<0$ and $D_{26}^{-3}-\lambda/M<0$.
By \eqref{7.8}, one computes that
\begin{equation*}
\left\{
  \begin{array}{ll}
\displaystyle 2\frac{\lambda}{M}=\Theta(x_{5})-\eta J(x_{5})=\frac{1}{D^{3}_{52}}+\frac{1}{D^{3}_{54}}-\eta(\frac{1}{D^{3}_{52}}-\frac{1}{D^{3}_{54}}),\\[0.3cm]
\displaystyle 2\frac{\lambda}{M}=\Theta(-x_{6})-\eta J(-x_{6})=\frac{1}{D^{3}_{64}}+\frac{1}{D^{3}_{62}}-\eta(\frac{1}{D^{3}_{64}}-\frac{1}{D^{3}_{62}}),
  \end{array}
\right.
\end{equation*}
which implies
\[ \frac{1}{D^{3}_{45}}-\frac{\lambda}{M}=\frac{\eta-1}{2}J(x_{5}),\qquad
\frac{1}{D^{3}_{26}}-\frac{\lambda}{M}=-\frac{\eta+1}{2}J(-x_{6}).
\]
Consequently, $-1<\eta<1$.

Recall that $D_{15}^{-3}=D_{16}^{-3}=\lambda/M$. The $y$ component of \eqref{7.5} for $k=1$ gives
\[
m_{1}\Big((2a)^{-3}-\frac{\lambda}{M}\Big)+m_{2}\Big((1+a^2)^{-3/2}-\frac{\lambda}{M}\Big)=0.
\]
Note that $P=x_{5}z_{6}-x_{6}z_{5}$, then for $k=2$, its $x$ component is
\[
\begin{aligned}
2m_{1}\Big((1+a^2)^{-3/2}-\frac{\lambda}{M}\Big)+2m_{2}(\frac{1}{8}-\frac{\lambda}{M})
&=m_{5}(x_{5}-1)(\frac{1}{D^{3}_{25}}-\frac{\lambda}{M})
  -m_{6}(-x_{6}+1)(\frac{1}{D^{3}_{26}}-\frac{\lambda}{M})\\
&=\frac{1+\eta}{2}
  \bigl[m_{5}(x_{5}-1)J(x_{5})+m_{6}(-x_{6}+1)J(-x_{6})\bigr]\\
&=\frac{m_{6}J(-x_{6})(1+\eta)}{2z_{5}}
  \bigl[z_{6}(x_{5}-1)+z_{5}(-x_{6}+1)\bigr]\\
&=\frac{m_{6}PJ(-x_{6})}{2z_5}(1-\eta^2),
\end{aligned}
\]
where we used $m_{5}z_{5}J(x_{5})=m_{6}z_{6}J(-x_{6})$ and $z_{6}-z_{5}=\eta P$.
We have therefore proved
\begin{equation}\label{7.9}
\left\{
  \begin{array}{ll}
\displaystyle m_{1}\Big((2a)^{-3}-\frac{\lambda}{M}\Big)+m_{2}\Big((1+a^2)^{-3/2}-\frac{\lambda}{M}\Big)=0,\\[0.3cm]
\displaystyle m_{1}\Big((1+a^2)^{-3/2}-\frac{\lambda}{M}\Big)+m_{2}(\frac{1}{8}-\frac{\lambda}{M})
=\frac{m_{5}PJ(-x_{6})}{4z_{5}}(1-\eta^2)>0.
  \end{array}
\right.
\end{equation}

Suppose that $R^2\le1$.
Then \eqref{7.6} gives $\lambda/M\ge (1+a^2)^{-3/2}$, and the first equation
in \eqref{7.9}, with $m_{1},m_{2}>0$, implies $(2a)^{-3}\ge\lambda/M$.
Thus $3a^2\le R^2\le1$, so that $a^2+R^2\le4/3$, and then
$\lambda/M\ge(4/3)^{-3/2}>1/8$.
The left-hand side of the second equation in \eqref{7.9}
would then be strictly negative, a contradiction.
Hence $R^2>1$.
It follows that $\lambda/M<(1+a^2)^{-3/2}$, so the first equation in
\eqref{7.9} gives $(2a)^{-3}<\lambda/M$.
By \eqref{7.6}, this means $R^2<3a^2$.
In particular, $a>\sqrt{3}/3$, and the necessary bound
$\sqrt{3}/3<a<1$ follows.

Recall that $D_{56}^2=(x_{5}-x_{6})^2+(z_{5}-z_{6})^2$.
Because $D_{15}^{-3}=D_{35}^{-3}=\lambda/M$, the $x$ component of
\eqref{7.5} for $k=5$ reduces to
\begin{equation}\label{7.10}
\displaystyle m_{2}[J(x_{5})-x_{5}K(x_{5})]=m_{6}(x_{5}-x_{6})(D^{-3}_{56}-\frac{\lambda}{M}).
\end{equation}
By $P=x_{5}z_{6}-x_{6}z_{5}>0$ and $\eta=(z_{6}-z_{5})/P$, we have $1-x_{5}\eta=z_{5}(x_{5}-x_{6})/P$.
Inserting \eqref{7.8} into \eqref{7.10} , we obtain
$D^{-3}_{56}-\lambda/M=m_{2}z_{5}J(x_{5})/(m_{6}P)>0$. Using $\lambda/M=(a^2+R^2)^{-3/2}$ and $c=1+R^{2}$,
we have thus established
\begin{equation}\label{7.11}
R^2>1,\qquad D^2_{56}<a^2+R^2<c,\qquad -1<\eta<1.
\end{equation}

It remains to exclude $x_{5}\ne -x_{6}$. Since the treatment of $x_{6}>-x_{5}$ is analogous to that of $x_{5}>-x_{6}$, here we only consider the case of $x_{5}>-x_{6}$. Note that $-x_6, \, x_5>0$,
we may assume that $x_{5}>-x_{6}>0$.
Then \eqref{7.6} gives $0<z_{5}<z_{6}$, and hence $0<\eta<1$.
For $\beta>0$, denote
\[
G_\beta=F_\beta(x_{5})-F_\beta(-x_{6})
-\eta\bigl[J_\beta(x_{5})+J_\beta(-x_{6})\bigr],
\]
where
\[
F_\beta(t):=(c-2t)^{-\beta}+(c+2t)^{-\beta},  \quad
J_\beta(t):=(c-2t)^{-\beta}-(c+2t)^{-\beta}, \quad t>0.
\]
Subtracting the two equations in \eqref{7.8} gives
$G_{3/2}=0$.
We will show that $G_{3/2}>0$ instead.

For the auxiliary exponent $\beta=1$, we have
\[
F_1(x)=\frac{2c}{c^2-4x^2},
\qquad J_1(x)=\frac{4x}{c^2-4x^2}.
\]
Using $0<\eta<1$ and $\eta=(z_{6}-z_{5})/P$, then $z_{6}-z_{5}>0$. Combining ${z_6}^2-{z_5}^2={x_5}^2-{x_6}^2$, $(x_{5}+x_{6})/(z_{6}-z_{5})=(z_{6}+z_{5})/(x_{5}-x_{6})$. Hence, by $R^2={x_5}^2+{z_5}^2={x_6}^2+{z_6}^2$, one computes that
\[
\frac{x_{5}+x_{6}}{\eta}
=\frac{(x_{5}z_{6}-x_{6}z_{5})(z_{5}+z_{6})}{x_{5}-x_{6}}
=R^2+z_{5}z_{6}+x_{5}x_{6}
=2R^2-\frac{(x_{5}-x_{6})^{2}+(z_{5}-z_{6})^{2}}{2}.
\]
Consequently,
\begin{align*}
G_1
&=\frac{4(x_{5}-x_{6})\bigl[2c(x_{5}+x_{6})-\eta(c^2+4x_{5}x_{6})\bigr]}
{(c^2-4{x_5}^2)(c^2-4{x_6}^2)}\\
&=\frac{4(x_{5}-x_{6})\eta\bigl[c(4R^2-c-(x_{5}-x_{6})^{2}-(z_{5}-z_{6})^{2})-4x_{5}x_{6}\bigr]}
{(c^2-4{x_5}^2)(c^2-4{x_6}^2)}.
\end{align*}
The denominator is positive, and \eqref{7.11} gives
$4R^2-c-(x_{5}-x_{6})^{2}-(z_{5}-z_{6})^{2}>4R^2-2c=2(R^2-1)>0$.
Therefore,
\begin{equation}\label{7.12}
G_1>0.
\end{equation}

To transfer this inequality to the exponent $3/2$, let
$\ell=x_{5}-x_{6}$ and $z_\pm=c\pm(x_{5}+x_{6})$.
Then $0<\ell<z_-<z_+$, because $z_{-}-\ell=c-2x_{5}=1+R^{2}-2x_{5}=(x_5-1)^{2}+{z_5}^{2}>0$.
For $\varsigma>\ell$, set
$L_\beta(\varsigma)=(\varsigma-\ell)^{-\beta}-(\varsigma+\ell)^{-\beta}$.
Regrouping the four terms in $G_\beta$ yields
\begin{equation}\label{7.13}
G_\beta=(1-\eta)L_\beta(z_-)
-(1+\eta)L_\beta(z_+).
\end{equation}
Define $E(\varsigma)=L_{3/2}(\varsigma)/L_1(\varsigma)$.
Applying the fundamental theorem of calculus to the divided
difference of $y\mapsto y^{3/2}$, we find
\[
E(\varsigma)=\frac{\Big((\varsigma-\ell)^{-1}\Big)^{3/2}-\Big((\varsigma+\ell)^{-1})\Big)^{3/2}}{(\varsigma-\ell)^{-1}-(\varsigma+\ell)^{-1}}
=\frac32\int_0^1
\Big(t(\varsigma-\ell)^{-1}+(1-t)(\varsigma+\ell)^{-1}\Big)^{1/2}\,dt.
\]
Thus $E(\varsigma)>0$ and $E$ is strictly decreasing on $(\ell,\infty)$.
In particular, $E(z_-)>E(z_+)$.
Using \eqref{7.13}, $0<\eta<1$, and
\eqref{7.12}, we conclude that
\[
\begin{aligned}
G_{3/2}
&=(1-\eta)L_1(z_-)E(z_-)
 -(1+\eta)L_1(z_+)E(z_+)\\
&>E(z_+)\bigl[(1-\eta)L_1(z_-)
 -(1+\eta)L_1(z_+)\bigr]=E(z_+)G_1>0.
\end{aligned}
\]
This contradicts $G_{3/2}=0$.
Hence $x_{5}=-x_{6}$, and \eqref{7.6} then gives $z_{5}=z_{6}$.
Finally, the identity $m_{5}z_{5}J(x_{5})=m_{6}z_{6}J(-x_{6})$ in \eqref{7.7},
together with $z_{5}J(x_{5})>0$, yields $m_{5}=m_{6}$.

\medskip

{\bf To prove Theorem \ref{thm2.5} (3):}
When $q_5$ belongs to the $YOZ$-plane, then using Theorem \ref{thm2.5} (1), we have $x_6 = 0$, so that $q_6 = (0, y_6, z_6)$. The remainder of the proof of Theorem \ref{thm2.5} (3) is divided into three cases.

\textbf{Case I.} Assume that $y_5=0$. Recall that $q_5$ is not in the plane containing the rhombus, and that $x_5=0$, $z_5>0$, and $z_6>0$. Then using {\bf Step 1} from the proof of Proposition \ref{prop6.3}, it follows that the rhombus must be a square. This contradicts the assumption in Theorem \ref{thm2.5}. Therefore, $y_5\neq0$.

\textbf{Case II.} Assume $y_5>0$, then inequality (\ref{6.6}) holds. Thus, by the first step in the proof of Proposition \ref{prop6.2}, we have $y_6<0$.

\textbf{Case III.} Assume $y_5<0$, then
\begin{eqnarray*}
\displaystyle\frac{1}{\left[\sqrt{{x_5}^2+(y_5-a)^2+{z_5}^{2}}\right]^3}-\frac{1}{\left[\sqrt{{x_5}^2+(y_5+a)^2+{z_5}^{2}}\right]^3}<0.
 \end{eqnarray*}
Hence, combining $z_{6}/z_{5}>0$, $a>0$ and $a\neq1$, $m_{5}>0$, and $m_{6}>0$, for equation (\ref{6.4}) to hold, we must have
 \begin{eqnarray*}
\displaystyle\frac{1}{\left[\sqrt{{x_6}^2+(y_6-a)^2+{z_6}^{2}}\right]^3}-\frac{1}{\left[\sqrt{{x_6}^2+(y_6+a)^2+{z_6}^{2}}\right]^3}>0,
 \end{eqnarray*}
which implies that $y_6>0$.

From the \textbf{Cases I-III}, we have $y_5y_6<0$.

Without loss of generality, we write
\[
y_5>0,\qquad y_6<0,
\qquad z_5>0,\qquad z_6>0.
\]
Thus $q_5=(0,y_{5},z_{5})$ and $q_6=(0,y_{6},z_{6})$.

Apply the orthogonal transformation
\[
T(x,y,z)=(y,x,z).
\]
Since $T$ preserves Euclidean distances, the transformed positions
$Tq_k$ again form a central configuration with the same masses and
multiplier. Explicitly,
\[
Tq_1=(-a,0,0), \,  Tq_2=(0,1,0), \,Tq_3=(a,0,0), \, Tq_4=(0,-1,0), \, Tq_5=(y_{5},0,z_{5}), \, Tq_6=(y_{6},0,z_{6}).
\]
Now scale all positions by $1/a$ and set $\widetilde q_k=a^{-1}Tq_k$.
Central configurations are invariant under positive homotheties: if
$q_k$ satisfies \eqref{1.2} with multiplier $\lambda$, then $\mu q_k$
satisfies the corresponding equations with multiplier $\lambda/\mu^3$.
Hence the scaled configuration is again a central configuration, with
multiplier $a^3\lambda$.

Relabel the first four bodies by
\[
Q_1=\widetilde q_4,\qquad
Q_2=\widetilde q_3,\qquad
Q_3=\widetilde q_2,\qquad
Q_4=\widetilde q_1,
\]
and put $Q_5=\widetilde q_5$, $Q_6=\widetilde q_6$. Then
\begin{eqnarray*}
\left\{
  \begin{array}{ll}
Q_1=\displaystyle\left(0,-\frac1a,0\right), \,\,
Q_2=(1,0,0), \,\, Q_3=\displaystyle\left(0,\frac1a,0\right),\\ [0.3cm]
Q_4=(-1,0,0), \,\, Q_5=\displaystyle\left(\frac{y_{5}}{a},0,\displaystyle\frac{z_{5}}{a}\right), \,\, Q_6=\displaystyle\left(\frac{y_{6}}{a},0,\frac{z_{6}}{a}\right),
  \end{array}
\right.
\end{eqnarray*}
where the corresponding six masses are $M_1,\,M_2,\cdots,M_6$.
Based on Theorem \ref{thm2.5} (2), then the relabeled masses satisfy
\[
M_1=M_3=m_2=m_4, \qquad M_2=M_4=m_1=m_3, \qquad m_5=m_6.
\]
$m_2=m_4$ and $m_1=m_3$.
Thus Theorem \ref{thm2.5} (2) applies with parameter $1/a>0$ and $\sqrt{}3/3<1/a<1$.
It follows that
\[
\frac{y_{5}}{a}=\frac{-y_{6}}{a},
\qquad
\frac{z_{5}}{a}=\frac{z_{6}}{a}, \qquad 1<a<\sqrt{3}.
\]
Since $a>0$, we obtain $y_{5}=-y_{6}$ and $z_{5}=z_{6}$.
 $\Box$

\section{Proof of Theorem \ref{thm2.6}}\label{sec8}\setcounter{equation}{0}

From the proof in Theorem \ref{thm2.5} (3), for every $a\in(1,\sqrt{3})$, if we prove the existence for the spatial central configuration with 
\[
x_{5}=x_{6}=0, y_{5}=-y_{6}>0, z_{5}=z_{6}>0, m_{1}=m_{3}, m_{2}=m_{4}, m_{5}=m_{6},
\]
then using the orthogonal transformation
\[
T(x,y,z)=(y,x,z)
\]
and scaling all positions of the six masses by $1/a$, we can directly obtain the existence for the spatial central configuration for every $a\in(\sqrt{3}/3,1)$. 
So, in the following, we only need to deal with the case of $a\in(1,\sqrt{3})$.

Assume that $x_{5}=x_{6}=0$, $y_{5}=-y_{6}>0$, $z_{5}=z_{6}>0$,
$m_1=m_3$, $m_2=m_4$, $m_5=m_6$ and $1<a<\sqrt{3}$. We consider the Newtonian
central configuration equations
\begin{equation}\label{8.1}
\sum\limits_{{j\neq k\atop{{1\leq j\leq 6}}}}
\frac{m_jm_k(q_j-q_k)}{|q_j-q_k|^3}
+\lambda m_k(q_k-q_0)=0,
\qquad k=1,\ldots,6,
\end{equation}
where
$q_0=M^{-1}\sum_{j=1}^6m_jq_j$ and $M=\sum_{j=1}^6m_j$.

Since multiplication of all masses by the same positive constant does not
change the geometry, we normalize
\[
m_2=m_4=1
\]
and write
\[
m_1=m_3=m,\qquad m_5=m_6=\phi.
\]
Set $S=m+1+\phi$ and $r^2={y_5}^2+{z_5}^2$. Then
$q_0=\Big(0,0,(\phi z_{5})/S\Big)$.

Introduce
\[
L=(1+a^2)^{-3/2},\qquad
N=(1+r^2)^{-3/2},\qquad \alpha_{-}=(a^2+r^2-2ay_5)^{-3/2},\qquad
\alpha_+=(a^2+r^2+2ay_5)^{-3/2}.
\]
Since $a,y_{5}>0$, one has $\alpha_->\alpha_+$.

The $z$-component of the equation at $q_2$ gives
\begin{equation}\label{8.2}
\lambda=2SN.
\end{equation}
The $z$-component of the equation at $q_1$ therefore yields the purely
geometric relation
\begin{equation}\label{8.3}
\alpha_-+\alpha_+=2N.
\end{equation}
Moreover, using $\lambda/M=\lambda/2S=N$ and $D^{-3}_{52}=({y_5}^2+{z_5}^2+1)^{-3/2}=N$, the $x$-component at $q_2$ gives
\begin{equation}\label{8.4}
m(L-N)=N-\frac18.
\end{equation}
Finally, set
\[
\Delta=\alpha_{-}-\alpha_{+}>0,
\qquad
H=\frac{1}{4{y_5}^3}-2N.
\]
Then the $y$-component at $q_5$ becomes
\begin{equation}\label{8.5}
\phi H=\frac{ma\Delta}{{y_5}},
\end{equation}
and the $y$-component at $q_1$, after using
\eqref{8.2}--\eqref{8.4}, becomes
\begin{equation}\label{8.6}
2L-\frac14
+m\left(\frac{1}{4a^3}-2L-\frac{\Delta^2}{H}\right)=0.
\end{equation}

Fix $a\in(1,\sqrt3)$. For $r\in(a,\sqrt3)$, define
\[
P=a^2+r^2,\qquad W=1+r^2
\]
and
\[
\Phi_{a,r}(y_5)
=(P-2ay_{5})^{-3/2}+(P+2ay_{5})^{-3/2},
\qquad 0\le y_5\le r.
\]
Then \eqref{8.3} is equivalent to
\begin{equation}\label{8.7}
\Phi_{a,r}(y_5)=2W^{-3/2}.
\end{equation}

In what follows, we divide the remaining proof into three parts.

{\bf Part I:}
For every $a\in(1,\sqrt3)$ and every $r\in(a,\sqrt3)$, we show that equation
\eqref{8.7} admits a unique solution $y_{5}=s_a(r)\in(0,r)$.
Moreover, $s_a(r)$ depends continuously, indeed smoothly, on $r$.

In fact, at $y_{5}=0$,
\[
\Phi_{a,r}(0)=2(a^2+r^2)^{-3/2}
<2(1+r^2)^{-\frac32}=2W^{-3/2},
\]
because $a>1$. At $y_{5}=r$,
\[
\Phi_{a,r}(r)=(r-a)^{-3}+(r+a)^{-3}.
\]
Since $0<r-a<\sqrt3-1<1$, the first term is larger than $1$, whereas
$2W^{-3/2}<1$. Hence
$\Phi_{a,r}(r)>2W^{-3/2}$.

Furthermore,
\[
\Phi'_{a,r}(y_5)
=3a(P-2ay_5)^{-5/2}-3a(P+2ay_5)^{-5/2}>0
\]
for $y_{5}\in(0,r)$. Thus the intermediate value theorem and strict
monotonicity give a unique solution $s_a(r)\in(0,r)$. Since
$\partial_{y_5}\Phi_{a,r}(s_a(r))>0$, the implicit function theorem gives
smooth dependence on $r$.

{\bf Part II:}
For every $r\in(a,\sqrt3)$, we prove that the solution in {\bf Part I} satisfies
$4{s_a(r)}^2<1+r^2$.
Consequently,
\[
H=\frac{1}{4{s_a(r)}^3}-2(1+r^2)^{-3/2}>0.
\]

In fact, let
\[
s_0=\frac{\sqrt{1+r^2}}{2}=\frac{\sqrt W}{2}.
\]
Since $r>1$, one has $0<s_0<r$, which implies $a^{2}+r^{2}-2as_{0}>0$. Put $u=a/\sqrt W$. Because
$1<a<r<\sqrt3$, we have
$1/2<u<\sqrt{3}/2$.
Moreover, since $W<4$,
\[
\frac{a^2+r^2}{W}=u^2+1-\frac1W<u^2+\frac34.
\]
Therefore
\[
W^{3/2}\Phi_{a,r}(s_0)
>
\left(u^2-u+\frac34\right)^{-3/2}
+
\left(u^2+u+\frac34\right)^{-3/2}.
\]
For $u\in(1/2,\sqrt3/2)$,
\[
u^2-u+\frac34\le \frac{3-\sqrt3}{2},
\qquad
u^2+u+\frac34\le \frac{3+\sqrt3}{2}.
\]
Hence
\[
W^{3/2}\Phi_{a,r}(s_0)
>
\left(1+\frac1{\sqrt3}\right)^{3/2}
+
\left(1-\frac1{\sqrt3}\right)^{3/2}\geq \left(2\sqrt{\frac{2}{3}}\right)^{3/2}=\left(\frac{16\sqrt{2}}{3\sqrt{3}}\right)^{1/2}>2.
\]
 Thus
$\Phi_{a,r}(s_0)>2W^{-3/2}$. Since $\Phi_{a,r}$ is strictly increasing
and $y_5=s_a(r)$ is the unique solution of \eqref{8.7}, it follows that
$s_a(r)<s_0$. Hence $4{s_a(r)}^2<1+r^2$, which is equivalent to $H>0$.

{\bf Part III:} For every $1<a<\sqrt3$, we prove the existence of the spatial central configuration.

For $r\in(a,\sqrt3)$, define
\begin{equation}\label{8.8}
m(r)=\frac{N-\frac18}{L-N},
\end{equation}
where $L=(1+a^2)^{-3/2}$ and
$N=(1+r^2)^{-3/2}$.
Since $a<r<\sqrt3$, we have
$1/8<N<L$,
and therefore $m(r)>0$. Recall the notation $m_{5}=m_{6}=\phi$. Choose
\begin{equation}\label{8.9}
\phi=\frac{m(r)a\Delta}{s_a(r)H}:=\phi(r).
\end{equation}
Then $s_a(r)>0$ and the conclusion of {\bf Part II} give $\phi(r)>0$.

Now set
\begin{equation}\label{8.10}
G_a(r)
=
2L-\frac14
+m(r)
\left(
\frac{1}{4a^3}-2L-\frac{\Delta^2}{H}
\right).
\end{equation}
Then \eqref{8.6} is exactly $G_a(r)=0$.

Fix $a\in(1,\sqrt3)$. By {\bf Parts I and II}, the
function $G_a$ in \eqref{8.10} is continuous on $(a,\sqrt3)$.

As $r\downarrow a$, one has $N\to L$. Since $L>1/8$,
\[
m(r)=\frac{N-\frac18}{L-N}\longrightarrow+\infty.
\]
Furthermore,
\[
\frac{1}{4a^3}-2L<0.
\]
Indeed, this inequality is equivalent to
$(1+a^2)^{3/2}<8a^3$, which follows from $a>1$. Recall that $y_5=s_a(r)$ and $4{s_a(r)}^2<1+r^2$. Then
$\Delta^2/H>0$, which implies the quantity inside the parentheses in \eqref{8.10}
is bounded above by the fixed negative number
$1/(4a^3)-2L$. Hence
\begin{equation}\label{8.11}
G_a(r)\longrightarrow-\infty
\qquad\text{as }r\downarrow a.
\end{equation}

Next let $r\uparrow\sqrt3$. Then $N\to1/8$, and therefore
$m(r)\to0$. The unique solution $s_a(r)$ extends continuously to the
unique solution of \eqref{8.7} at $r=\sqrt3$. The proof of {\bf Part II}, applied at $r=\sqrt3$, shows that the limiting value
$s_*$ satisfies $s_*<1$. Consequently,
$H\longrightarrow 1/(4{s_*}^3)-1/4>0$,
so $\Delta^2/H$ remains bounded. Thus
\begin{equation}\label{8.12}
\lim_{r\uparrow\sqrt3}G_a(r)
=2L-\frac14>0,
\end{equation}
because $a<\sqrt3$.

By \eqref{8.11}, \eqref{8.12}, and the intermediate value
theorem, there exists $r_a\in(a,\sqrt3)$ such that $G_a(r_a)=0$.
Set
\[
y_5=s_a(r_a),\qquad
z_5=\sqrt{{r_a}^2-{y_5}^2},
\]
and normalize $m_2=m_4=1$. Define
\[
m_1=m_3=m(r_a)>0,
\qquad
m_5=m_6=\phi(r_a)>0,
\]
and let
\[
\lambda=2\bigl(m(r_a)+1+\phi(r_a)\bigr)N(r_a).
\]
Combining equations \eqref{8.7}--\eqref{8.9} and $G_a(r_a)=0$ hold, we deduce that for every $a\in(1,\sqrt3)$, there exist $x_{5}=x_{6}=0$, $0<y_{5}=-y_{6}=s_a(r_a)<r$, $z_{5}=z_{6}=\sqrt{{r_a}^2-{y_5}^2}>0$, and
\[
m_1=m_3=m(r_a),
\qquad
m_2=m_4=1,
\qquad
m_5=m_6=\phi(r_a)
\]
such that
independent central
configuration equations \eqref{8.2}--\eqref{8.6} hold. The remaining equations follow from the
reflection symmetries of \eqref{2.1} with $x_{5}=x_{6}=0$, $y_{5}=-y_{6}>0$ and $z_{5}=z_{6}>0$; the remaining
$z$-component at $q_5$ is also an immediate consequence of
\eqref{8.3} and \eqref{8.2}. Hence \eqref{8.1} holds. $\Box$\\

%%%%%%%%%%%%%%%%%%%

\section{Proof of Theorem \ref{thm2.7}: Existence of Perverse Solutions }\label{sec9}\setcounter{equation}{0}

First, we prove the existence of a spatial central configuration. For this particular configuration, we assume that equation (\ref{2.2}) holds, that $q_5$ is at the geometric center, and the geometric center coincides with the center of mass. In this case,
 the central configuration equations (\ref{1.2}) are reduced to four equations.
\begin{eqnarray}\label{9.1}
\left\{
  \begin{array}{llllll}
f_{x_2}=\displaystyle-\frac{ \sqrt{2}m_0}{2}-\frac{m_0}{4}-\frac{m_5}{\left({z_5}^{2}+1\right)^{\frac{3}{2}}}-\frac{m_6}{\left(m_6^{2}+1\right)^{\frac{3}{2}}}+\lambda=0,\\[0.3cm]
f_{z_1}=\displaystyle\frac{m_5 z_5}{\left({z_5}^{2}+1\right)^{\frac{3}{2}}}+\frac{m_6 z_6}{\left({z_6}^{2}+1\right)^{\frac{3}{2}}}-\frac{\lambda \left(m_5 z_5 +m_6 z_6 \right)}{4 m_0 +m_5 +m_6}=0, \\[0.3cm]
f_{z_5}=\displaystyle -\frac{4 m_0 z_5}{\left({z_5}^{2}+1\right)^{\frac{3}{2}}}+\frac{m_6 \left(z_6 -z_5 \right)}{\left(\left(z_6 -z_5 \right)^{2}\right)^{\frac{3}{2}}}+\lambda \left(z_5 -\frac{m_5 z_5 +m_6 z_6}{4 m_0 +m_5 +m_6}\right)=0,\\[0.3cm]
f_{z_6}= \displaystyle-\frac{4 m_0 z_6}{\left({z_6}^{2}+1\right)^{\frac{3}{2}}}+\frac{m_5 \left(-z_6 +z_5 \right)}{\left(\left(z_6 -z_5 \right)^{2}\right)^{\frac{3}{2}}}+\lambda \left(z_6 -\frac{m_5 z_5 +m_6 z_6}{4 m_0 +m_5 +m_6}\right)=0,
  \end{array}
\right.
\end{eqnarray}

Let $z_5=b>0$. To make $q_5$ at the geometric center, i.e., $D_{15}=D_{25}=D_{35}=D_{45}=D_{56}=\sqrt{b^2+1}$, we choose  $z_6=\sqrt{b^2+1}+b$.
In order to make the center of mass at $q_5$, from $\frac{m_1q_1+ m_2q_2 +\cdots +m_6q_6}{m_1+m_2+\cdots +m_6}= q_5$, we have $\frac{m_5z_5 +m_6z_6}{4m_0+m_5+m_6}=z_5=b$, which implies that $m_6=\frac{4m_0b}{\sqrt{b^2+1}}.$ Solve $f_{x_2}=0$ for $\lambda$, one obtains
\begin{eqnarray}\label{9.2}
  \lambda= \frac{\sqrt{2}m_0}{2}+\frac{m_0}{4}+\frac{\mathit{m_5}}{\left(b^{2}+1\right)^{\frac{3}{2}}}+\frac{4 bm_0}{\sqrt{b^{2}+1}\, \left(\left(\sqrt{b^{2}+1}+b \right)^{2}+1\right)^{\frac{3}{2}}},
\end{eqnarray}
  which is the expression in equation (\ref{2.4}).  Substituting these relations in above equations, then $f_{z_5}\equiv 0$ and
$$f_{z_1}=-\frac{2 \mathit{m_0} \left(-4+\left(\sqrt{b^{2}+1}\, b +b^{2}+1\right) \left(\sqrt{2}+\frac{1}{2}\right) \sqrt{2 b^{2}+2+2 \sqrt{b^{2}+1}\, b}\right) b}{\sqrt{2 b^{2}+2+2 \sqrt{b^{2}+1}\, b}\, \left(4 b^{2}+4 \sqrt{b^{2}+1}\, b +4\right)}=0
$$
$$f_{z_6}=\frac{2 \mathit{m_0} \left(-4+\left(\sqrt{b^{2}+1}\, b +b^{2}+1\right) \left(\sqrt{2}+\frac{1}{2}\right) \sqrt{2 b^{2}+2+2 \sqrt{b^{2}+1}\, b}\right) \sqrt{b^{2}+1}}{\sqrt{2 b^{2}+2+2 \sqrt{b^{2}+1}\, b}\, \left(4 b^{2}+4 \sqrt{b^{2}+1}\, b +4\right)}=0.
$$
These two equations imply that $b$ must satisfy equation (\ref{2.3}), which is below
$$ \widehat{g}(b)= -4+\left(\sqrt{b^{2}+1}\, b +b^{2}+1\right) \left(\sqrt{2}+\frac{1}{2}\right) \sqrt{2 b^{2}+2+2 \sqrt{b^{2}+1}\, b}=0.$$
It is easy to verify that $\widehat{g}(0)=-2+\frac{\sqrt{2}}{2}<0$, and
$$\widehat{g}(1)=-4+\left(2+\sqrt{2}\right) \left(\sqrt{2}+\frac{1}{2}\right) \sqrt{4+2 \sqrt{2}}>0. $$  The function $\widehat{g}$ is strictly increasing because the derivative $\widehat{g}\,^{\prime}(b)>0$. By the Intermediate Value Theorem and the monotonicity of the function $\widehat{g}$,  there exists a unique positive value $b_0$, numerically $b_0\approx 0.2354261907$, such that $\widehat{g}(b_0)=0$.

Therefore, we conclude that there exists a $\lambda\in\mathbb{R}$ such that equations (\ref{9.1}) hold; that is, we have proved the existence of the special central configuration.

Second, we use this special central configuration to establish the existence of perverse solutions in $\mathbb{R}^{3}$.

For this unique positive value $b_0$, we obtain $z_5=b_0$ and $z_6=\sqrt{{b_0}^2+1} +b_0$.  Then the configuration $q = [q_1, q_2, \cdots, q_6] $  defined as in equation (\ref{2.2}) forms  a central configuration for the mass vector $(m_1, m_2, m_3, m_4, m_5, m_6)$ with $m_1=m_2=m_3=m_4=m_0$, $m_6=\frac{4m_{0}b_{0}}{\sqrt{{b_0}^2+1}}$, where $m_0$ and $m_5$ are arbitrary positive values.

The location $q_5$ coincides with both the mass center (which is independent of $m_0$ and $m_5$) and   the geometric center, meaning that $D_{15}=D_{25}=D_{35}=D_{45}=D_{56}$.

For a fixed $\lambda=\lambda_0$, since $f_{z_5} \equiv 0$ and by equation (\ref{9.2}),  the homothetic collision/expansion solution generated from this particular central configuration yields a perverse solution for the mass sets by choosing $m_0 \in  (0, 0.8500232075\lambda_0)$ and  $m_5= \frac{\lambda_0-1.176438468m_0}{0.9222711668}>0$.

It is clear that the sum of masses for this two set of masses are different. In fact for fixed $\lambda =\lambda_0$ and the total mass $M=M_0$, there is a unique set of mass $(m_0, m_5)$ that satisfies the linear equation system : $$  \lambda_0=1.176438468m_0 + 0.9222711667m_5$$  and $$M_0=4.9166445852m_0 +m_5.$$
Therefore the homothetic total collision/expansion solution generated from this central configuration for an arbitrarily fixed $\lambda$ is a perverse solution, but it is not a really perverse solution.

This completes the proof for Theorem \ref{thm2.7}. $\Box$

\section*{CRediT authorship contribution statement} \textbf{Liang Ding:} Conceptualization, Formal analysis, Methodology, Validation, Visualization, Writing - original draft, Writing - review \& editing. \textbf{Jin Wang:} Conceptualization, Methodology, Writing - original draft. \textbf{Jinrong Wang:} Conceptualization, Formal analysis, Methodology, Writing - original draft, Writing - review \& editing.
\textbf{Zhifu Xie:} Conceptualization, Formal analysis, Methodology, Supervision, Validation, Writing - original draft, Writing - review \& editing.

\section*{Declaration of competing interest} The authors declare that they have no known competing financial
interests or personal relationships that could have appeared to influence
the work reported in this paper.

\section*{Data availability} Data will be made available on request.

\section*{Acknowledgements} Liang Ding was partially supported by the National Natural Science Foundation of China (Grant Number: 12361022), Guizhou Provincial Basic Research Program (Natural Science, Grant Number: MS[2026]315), and Guizhou Provincial ``Jiaxiu Zhiguang'' Visiting Scholar Program. Jinrong Wang was partially supported by Guizhou Provincial Major Project of Basic Research Program,
Qiankehe Foundation (Grant Number: VZD[2026]001) and Guizhou Provincial Key Laboratory of Applied Mathematics and Computing Power \& Algorithms (Qiankehe Platform ZSYS[2025]039).
Zhifu Xie was partially supported by USA National Science Foundation (Award Number: 2447675) and USM Foundation (Wright W. and Annie Rea Cross Endowment).

\end{document}